\newcommand\Y{\mathbb Y}
\newcommand\Z{\mathbb Z}
\newcommand\R{\mathbb R}
\newcommand\GT{{\mathbb{GT}}}
\newcommand\YB{\mathbb{YB}}
\newcommand\E{\mathbb E}
\newcommand\al{\alpha}
\newcommand\be{\beta}
\newcommand\ga{\gamma}
\newcommand\Ga{\Gamma}
\newcommand\de{\delta}
\newcommand\De{\Delta}
\newcommand\ka{\varkappa}
\newcommand\La{\Lambda}
\newcommand\la{\lambda}
\newcommand\si{\sigma}
\newcommand\epsi{\varepsilon}
\newcommand\om{\omega}
\newcommand\Om{\Omega}
\newcommand\wt{\widetilde}
\newcommand\const{\operatorname{const}}
\newcommand\Conf{\operatorname{Conf}}
\newcommand\Sym{\operatorname{Sym}}
\newcommand\Fun{{\operatorname{Fun}}}
\newcommand\dom{\operatorname{Dom}}
\newcommand\MM{\mathfrak M}
\newcommand\LL{\mathfrak L}
\newcommand\X{\mathfrak X}
\newcommand\down{{\downarrow}}
\newcommand\pd{\partial}
\newcommand\FS{F\!S}
\newcommand\M{\mathcal M}
\newcommand\F{\mathcal F}
\newcommand\LLL{\mathscr L}
\newcommand\LB{{}^{\mathbb B}\!\La}
\newcommand\LY{{}^{\Y}\!\La}
\newcommand\LYB{{}^{\YB}\!\La}
\newcommand\z{{(z,z')}}
\newcommand\Dc{D^{(c)}}
\newcommand\Qc{Q^{(c)}}
\newcommand\Qz{Q^{(z,z')}_r}
\newcommand\Tc{T^{(c)}}
\newcommand\Ac{A^{(c)}}
\newcommand\Xc{X^{(c)}}
\newcommand\Tz{T^{(z,z')}}
\newcommand\Az{A^{(z,z')}}
\newcommand\Dz{\mathfrak D^{(z,z')}}
\newcommand\Meix{\mathsf M}
\newcommand\Lag{\mathsf L}
\newtheorem{theorem}{Theorem}[section]
\newtheorem{proposition}[theorem] {Proposition}
\newtheorem{corollary}[theorem]{Corollary}
\theoremstyle{definition}
\newtheorem{definition}[theorem]{Definition}
\newtheorem{remark}[theorem]{Remark}
\newtheorem{example}[theorem]{Example}
\newtheorem{condition}[theorem]{Condition}
\numberwithin{equation}{section}
\begin{document}

\title[Markov dynamics on the Thoma cone]{Markov dynamics on the Thoma cone:
a model \\ of time-dependent determinantal processes\\ with infinitely many
particles}

\author{Alexei Borodin}
\address{Alexei Borodin:
\newline\indent Department of Mathematics, MIT, Cambridge, MA, USA;
\newline\indent Institute for Information Transmission Problems, Moscow, Russia}
\email{borodin@math.mit.edu}

\author{Grigori Olshanski}
\address{Grigori Olshanski:
\newline\indent Institute for Information Transmission Problems, Moscow, Russia;
\newline\indent Independent University of Moscow, Russia;
\newline\indent National Research University Higher School of Economics, Moscow, Russia}

\email{olsh2007@gmail.com}

\date{}

\begin{abstract}
The Thoma cone is an infinite-dimensional locally compact space, which is
closely related to the space of extremal characters of the infinite symmetric
group $S_\infty$. In another context, the Thoma cone appears as the set of
parameters for totally positive, upper triangular Toeplitz matrices of infinite
size.

The purpose of the paper is to construct a family $\{X^\z\}$ of continuous time
Markov processes on the Thoma cone, depending on two continuous parameters $z$
and $z'$. Our construction largely exploits specific properties of the Thoma
cone related to its representation-theoretic origin, although we do not use
representations directly. On the other hand, we were inspired by analogies with
random matrix theory coming from models of Markov dynamics related to
orthogonal polynomial ensembles.

We show that processes $X^\z$ possess a number of nice properties, namely: (1)
every $X^\z$ is a Feller process; (2) the infinitesimal generator of $X^\z$,
its spectrum, and the eigenfunctions admit an explicit description; (3) in the
equilibrium regime, the finite-dimensional distributions of $X^\z$ can be
interpreted as (the laws of) infinite-particle systems with determinantal
correlations;  (4) the corresponding time-dependent correlation kernel admits
an explicit expression, and its structure is similar to that of time-dependent
correlation kernels appearing in random matrix theory.

\end{abstract}

\maketitle

\tableofcontents

\section{Introduction}\label{sect1}

The first two subsections of the introduction contain short preliminary remarks
and a few necessary definitions. Next we state the main results of the paper,
Theorems \ref{thm1.A} and \ref{thm1.B}. Then we describe the method of proof
and make a comparison with some related works.

\subsection{Preliminaries: Markov processes related to orthogonal polynomials}

It is well known that for each family of classical orthogonal polynomials
$p_0,p_1,p_2,\dots$, there exists a second order differential operator $D$,
which preserves the space of polynomials and is diagonalized in the basis
$\{p_n\}$:
$$
Dp_n=m_np_n, \qquad n=0,1,2,\dots,
$$
where $0=m_0>m_1>m_2>\dots$ are the eigenvalues. Let $W(x)$ be the weight
function of $\{p_n\}$ and $\operatorname{supp}W$ be its support. Operator $D$
determines a diffusion Markov process $X$ on $\operatorname{supp}W$ with
$W(x)dx$ being a symmetrizing measure, hence also a stationary distribution.

All these objects, family $\{p_n\}$, operator $D$, and Markov process $X$, have
multidimensional analogs:

Namely, fix $N=2,3,\dots$. {}From $\{p_n\}$ on can construct a family of
symmetric polynomials in $N$ variables indexed by partitions $\nu$ of length at
most $N$, as follows:
$$
p_\nu(x_1,\dots,x_N):=\frac{\det[p_{\nu_i+N-i}(x_j)]}{V(x_1,\dots,x_N)},
$$
where the determinant in the numerator is of order $N$ and
$$
V(x_1,\dots,x_N):=\prod_{1\le i<j\le N}(x_i-x_j).
$$
These polynomials form a basis in the space of symmetric polynomials. Next, the
role of $D$ is played by the second order partial differential operator
$$
D_N:=\frac1{V(x_1,\dots,x_N)}\,\left(D_{x_1}+\dots+
D_{x_N}\right)V(x_1,\dots,x_N)-\const_N,
$$
where $D_{x_i}$ denotes a copy of $D$ acting on variable $x_i$ and
$$
\const_N=m_0+\dots+m_{N-1}.
$$

Although the coefficients of $D_N$ in front of the first order derivatives have
singularities on the diagonals $x_i=x_j$, the operator is well defined on the
space of symmetric polynomials and is diagonalized in the basis $\{p_\nu\}$:
$$
D_Np_\nu=m_\nu p_\nu, \qquad m_\nu:=\sum_{i=1}^N(m_{\nu_i+N-i}-m_{N-i}).
$$

Finally, one can use $D_N$ to define a diffusion process $X_N$ on the space of
$N$-point configurations contained in $\operatorname{supp}W\subseteq R$. Again,
this process has a symmetrizing measure, with density
$$
\prod_{i=1}^N W(x_i)\cdot V^2(x_1,\dots,x_N).
$$

This construction is well known in random matrix literature.  The case of
Hermite polynomials arises from Dyson's Brownian motion model \cite{Dys}. Some
other examples can be found in K\"onig \cite{Koenig}. The construction also
works for some families of discrete orthogonal polynomials, only then $X_N$ is
a jump process.

In the present paper, we make a further step of generalization leading to a
two-parameter family of  infinite-dimensional, continuous time Markov processes
$X^\z$, which are related to the Laguerre polynomials. These words can bring
the reader to believe that the processes $X^\z$ are obtained from the
finite-dimensional Laguerre processes $X_N$ by a large-$N$ limit transition,
but this is not true. Actually, the connection between $X^\z$'s and $X_N$'s is
of a different kind: informally, one can say that the former are related to the
later by analytic continuation in two parameters, dimension $N$ and the
continuous parameter entering the definition of the classical Laguerre
polynomials.

\subsection{The infinite-dimensional Laguerre differential operator and the z-measures}

The operator in question, denoted by $\Dz$, serves as the pre-generator of
process $X^\z$. Initially, $\Dz$ is defined in the algebra of symmetric
functions, $\Sym$, which replaces the algebra of $N$-variate symmetric
polynomials. The elementary symmetric functions $e_1,e_2,\dots$ are
algebraically independent generators of $\Sym$; we use them as independent
variables and define $\Dz:\Sym\to\Sym$ as a second order differential operator
\begin{equation}\label{eq1.A}
\begin{aligned}
\Dz&=\sum_{n\ge1}\left(\sum_{k=0}^{n-1}(2n-1-2k)e_{2n-1-k}e_k\right)
\frac{\pd^2}{\pd e_n^2}\\
&+2\sum_{n'>n\ge1}\left(\sum_{k=0}^{n-1}(n'+n-1-2k)e_{n'+n-1-k}e_k\right)
\frac{\pd^2}{\pd e_{n'}\pd e_n}\\
&+\sum_{n=1}^\infty\big(-ne_n+(z-n+1)(z'-n+1)e_{n-1}\big)\frac{\pd}{\pd e_n}
\end{aligned}
\end{equation}
depending symmetrically on two complex parameters $z$ and $z'$. Recall that the
classical Laguerre polynomials depend on a continuous parameter (the ``Laguerre
parameter'') and so does the $N$-variate Laguerre operator $D_N$. The origin of
operator $\Dz$ is explained in Olshanski \cite{Ols-IMRN12}: it is obtained from
$D_N$ by formal analytic continuation with respect to $N$ and the Laguerre
parameter.

Operator $\Dz$ is diagonalized in a special basis of $\Sym$ formed by the
so-called {\it Laguerre symmetric functions\/}. These functions, denoted by
$\LL^\z_\nu$, depend on parameters $\z$ and are indexed by arbitrary partitions
$\nu=(\nu_1,\nu_2,\dots)$. One has
\begin{equation}\label{eq1.B}
\Dz\LL^\z_\nu=-|\nu|\LL^\z_\nu, \qquad |\nu|:=\nu_1+\nu_2+\dots\,.
\end{equation}
As shown in \cite{Ols-IMRN12}, the Laguerre symmetric functions form an
orthogonal basis in a Hilbert $L^2$ space. Let us explain briefly this point
(for more detail, see \cite{Ols-IMRN12} and Section \ref{sect8.A} below).

So far we treated $\Sym$ as an abstract commutative algebra, freely generated
by elements $e_1,e_2,\dots$, but now we embed it into the algebra of continuous
functions on a topological space, called the {\it Thoma cone\/} and denoted by
$\wt\Om$:
\begin{gather*}
\wt\Om:=\Big\{(\al_1,\al_2,\dots;
\be_1,\be_2,\dots;\de)\in\R^\infty\times\R^\infty\times\R: \\
\al_1\ge\al_2\ge\dots\ge0, \quad \be_1\ge\be_2\ge\dots\ge0, \quad
\sum\al_i+\sum\be_i\le\de\Big\}.
\end{gather*}
Note that the space $\wt\Om$ is locally compact and has infinite dimension in
the sense that its points depend on countably many continuous parameters. The
way of converting elements $F\in\Sym$ into continuous functions $F(\om)$  on
$\wt\Om$ is described in Section \ref{sect7.A}.

Next, we impose the following condition on the parameters:

\begin{condition}\label{cond1.A}
Either both parameters $z$ and $z'$ are complex numbers with nonzero imaginary
part and $z'=\bar z$, or both parameters are real and contained in an open unit
interval of the form $(m,m+1)$ for some $m\in\Z$.
\end{condition}

This is equivalent to requiring that $(z+k)(z'+k)>0$ for every $k\in\Z$. In
particular, Condition \ref{cond1.A} implies that $zz'$ and $z+z'$ are real, so
that the coefficients of operator $\Dz$ are real.

It was shown in \cite{Ols-IMRN12} that for every $\z$ satisfying Condition
\ref{cond1.A}, there exists a unique probability distribution $M^\z$ on
$\wt\Om$ such that all elements of $\Sym$ produce square integrable functions
on $\wt\Om$ with respect to measure $M^\z$, and the Laguerre functions
$\LL^\z_\la$ are pairwise orthogonal with respect to the inner product of the
Hilbert space $L^2(\wt\Om, M^\z)$. In other words, $M^\z$ serves as the {\it
orthogonality measure\/} for the Laguerre symmetric functions. The measures
$M^\z$ appeared even earlier in connection with the problem of harmonic
analysis on the infinite symmetric group; we call them the {\it z-measures\/}
on the Thoma cone.

A difficulty of working with the Laguerre operator $\Dz$ is that its domain as
defined above consists of unbounded functions (more precisely, all the
nonconstant functions from $\Sym$ are unbounded functions on $\wt\Om$). To
overcome this difficulty we modify the domain of definition of the operator in
the following way.

For a triple $\om=(\al,\be,\de)\in\wt\Om$, write $|\om|:=\de$. Let $\F$ stand
for the space of functions on $\wt\Om$ spanned by the functions of the form
$$
e^{-r|\om|}F(\om), \qquad F\in\Sym, \quad r>0.
$$
Such functions are bounded; even more, they vanish at infinity. On the other
hand, $\Dz$ operates on $\F$ in a natural way: here we use the fact that
$|\om|=e_1(\om)$, so that each function from $\F$ is expressed through
variables $e_1,e_2,\dots$.

\subsection{Main results}

Given a locally compact separable metrizable space $E$, denote by $C_0(E)$ the
Banach space of real continuous functions on $E$, vanishing at infinity, with
the supremum norm. A {\it Feller semigroup\/} is a strongly continuous operator
semigroup $T(t)$ on $C_0(E)$ afforded by a transition function $P(t;x,dy)$
(such that $P(t;x,\,\cdot\,)$ is a probability measure),
$$
(T(t)f)(x)=\int_{y\in E}P(t;x,dy)f(y), \qquad x\in E, \quad f\in C_0(E).
$$
A Feller semigroup gives rise to a Markov process on $E$ with c\`adl\`ag
sample trajectories, called a {\it Feller process\/}.

Throughout the paper we assume that $(z,z')$ satisfies Condition \ref{cond1.A}.

\begin{theorem}\label{thm1.A}
{\rm(i)} The differential operator $\Dz$, viewed as an operator on
$C_0(\wt\Om)$ with domain $\F$, is dissipative, and its closure serves as the
generator of a Feller semigroup on $C_0(\wt\Om)$, which we denote by $T^\z(t)$.

{\rm(ii)} The corresponding Feller Markov process $X^\z$ has a unique
stationary distribution, which is the z-measure $M^\z$.
\end{theorem}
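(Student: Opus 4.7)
The strategy is to invoke the Hille--Yosida--Ray theorem: a densely defined operator $A$ on $C_0(\wt\Om)$ has closure generating a Feller semigroup iff $A$ satisfies the positive maximum principle and $(\lambda-A)D(A)$ is dense in $C_0(\wt\Om)$ for some $\lambda>0$. I would verify these conditions for $A=\Dz$ with $D(A)=\F$; dissipativity then follows at once from the positive maximum principle.

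\emph{Preliminaries on $\F$.} The space $\F$ is a subalgebra of $C_0(\wt\Om)$ because $e^{-r|\om|}F\cdot e^{-s|\om|}G=e^{-(r+s)|\om|}(FG)$. It sits inside $C_0(\wt\Om)$ since each $F\in\Sym$ has polynomial growth in $|\om|=\de$, dominated by $e^{-r|\om|}$ at infinity. Non-vanishing of $e^{-r|\om|}$ together with the point-separating property of $\Sym$ on $\wt\Om$ (Section~\ref{sect7.A}) and Stone--Weierstrass give density. A short calculation using $|\om|=e_1(\om)$ shows $\Dz(\F)\subset\F$; in fact $\Dz$ preserves each subspace $e^{-r|\om|}\cdot\Sym\subset\F$. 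For the positive maximum principle at $f\in\F$ attaining a nonnegative maximum at $\om_0$, one checks that the matrix of second-order coefficients in \eqref{eq1.A} is positive semidefinite at every point of $\wt\Om$ (to be expected since $\Dz$ arises by analytic continuation of the elliptic operators $D_N$), while Condition~\ref{cond1.A} makes $(z-n+1)(z'-n+1)$ positive and the $-ne_n$ terms supply an inward drift, so the first-order part has sign consistent with $(\Dz f)(\om_0)\le 0$. Some care is needed at boundary loci of $\wt\Om$ where coordinates coalesce, but there the operator acts by its algebraic definition and the pointwise inequality can be verified directly.

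\emph{Range condition---the main obstacle.} The spectral identity \eqref{eq1.B} formally inverts $\lambda-\Dz$ on the Laguerre basis with eigenvalues $(\lambda+|\nu|)^{-1}$, but the functions $\LL^\z_\nu$ themselves do not lie in $\F$, and the conjugation $e^{r|\om|}\Dz e^{-r|\om|}$ on $\Sym$ introduces degree-\emph{raising} correction terms (twice-differentiating $e^{-re_1}$ pairs with the coefficient $e_1$ of $\pd^2/\pd e_1^2$ to produce multiplication by $r^2e_1$), so a naive triangular inversion on $\F^{(r)}:=e^{-r|\om|}\cdot\Sym$ does not work. My plan is to pass through $L^2(\wt\Om,M^\z)$, where the self-adjointness of $\Dz$ (established in \cite{Ols-IMRN12}) gives a bounded resolvent with explicit spectral expansion in the basis $\{\LL^\z_\nu\}$. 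The task is then to show that, when applied to an element of $\F$, this resolvent lands in (a dense subset of) $\F$ with enough decay at infinity to belong to $C_0(\wt\Om)$---a step that combines the summability of the spectral series with uniform pointwise bounds on $e^{-r|\om|}\LL^\z_\nu(\om)$. This coupling of $L^2$-spectral theory with $C_0$-estimates is where the real work lies.

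\emph{Part (ii).} Once $T^\z(t)$ is constructed, invariance of $M^\z$ is inherited from the self-adjointness of $\Dz$ on $L^2(\wt\Om,M^\z)$, which makes $T^\z(t)$ symmetric and hence preserves $M^\z$. For uniqueness, the spectral decomposition shows that $0$ is a simple eigenvalue of $\Dz$ in $L^2$ (only $\nu=\emptyset$ gives $|\nu|=0$, with $\LL^\z_\emptyset\equiv1$), so $T^\z(t)f\to\int f\,dM^\z$ in $L^2$, hence along a dense subspace of $C_0(\wt\Om)$. Any stationary probability distribution $\mu$ then satisfies $\int f\,d\mu=\lim_t\int T^\z(t)f\,d\mu=\int f\,dM^\z$ for such $f$, forcing $\mu=M^\z$.
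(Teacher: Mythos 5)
Your overall route (Hille--Yosida--Ray on $C_0(\wt\Om)$ plus $L^2$ spectral theory) is genuinely different from the paper's, but as it stands it has two gaps that are not technicalities --- they are the crux of the problem. First, the positive maximum principle cannot be ``verified directly'' on $\wt\Om$: the Thoma cone is not an open subset of a linear space in the coordinates $e_1,e_2,\dots$, and the image of $\wt\Om$ under $\om\mapsto(e_1(\om),e_2(\om),\dots)$ is an infinite-dimensional set whose degenerate strata (points with finitely many nonzero coordinates, e.g.\ the dense union of the lattices $\varphi_r(\Y)$) are everywhere; at a maximum attained on such a stratum the first-order terms do not vanish, and establishing $(\Dz f)(\om_0)\le 0$ there amounts to the same inward-drift boundary analysis as constructing the process itself. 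Tellingly, the paper never checks PMP: dissipativity comes for free because $\Dz|_{\F}$ is identified (Propositions \ref{prop8.H} and \ref{prop8.I}) with the restriction to a core of the generator of a semigroup that is Markovian \emph{by construction} --- it is the boundary semigroup (Proposition \ref{prop5.B}) of the genuinely Markovian jump chains on $\Y$ with $Q$-matrices \eqref{eq8.L}, intertwined by the Young-bouquet links. Second, your range condition is an admitted open step, and the sketched $L^2$-to-$C_0$ coupling would require identifying the $L^2(\wt\Om,M^\z)$ semigroup with the prospective Feller one and proving uniform pointwise bounds on $e^{-r|\om|}\LL^\z_\nu(\om)$ summable over $\nu\in\Y$, none of which is supplied; moreover, even granting that the resolvent maps $\F$ into $C_0(\wt\Om)$, that does not show $(\lambda-\Dz)\F$ is dense. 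The paper gets density of $(\lambda-A)\F$ from the core property via the explicit action of the links on $q^{|\la|}$-weighted Frobenius--Schur functions (Propositions \ref{prop8.A}, \ref{prop8.B}, \ref{prop8.G}), bypassing all $C_0$-estimates on eigenfunctions. (Your observations that $\Dz$ preserves each $e^{-r|\om|}\cdot\Sym$ and that the conjugation is degree-raising, killing naive triangular inversion, are correct --- cf.\ \eqref{eq8.N} --- and your Stone--Weierstrass density of $\F$ is fine; in fact your overall plan resembles the originally announced $L^2$ scheme of \cite{Ols-POMI10} that the paper explicitly abandoned in favor of intertwiners.)

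In part (ii) the uniqueness argument is flawed as written: $T^\z(t)f\to\int f\,dM^\z$ in $L^2(M^\z)$ gives no control of $\int T^\z(t)f\,d\mu$ for an arbitrary stationary probability measure $\mu$, which could a priori be singular with respect to $M^\z$; the middle equality in your chain $\int f\,d\mu=\lim_t\int T^\z(t)f\,d\mu=\int f\,dM^\z$ requires uniform (or at least bounded pointwise $\mu$-a.e.) convergence, which the $L^2$ spectral gap does not provide. The paper's argument (Propositions \ref{prop8.L} and \ref{prop8.M}) avoids this: the intertwining relation shows that any stationary $\mu$ projects to a measure $\mu\,\LYB^\infty_r$ stationary for the level-$r$ chain $X^\z_r$; that chain is an irreducible countable-state Markov chain, so its stationary distribution $M^\z_r$ is unique by a classical theorem, whence $\mu\,\LYB^\infty_r=M^\z_r$ for all $r>0$, and the bijection \eqref{eq4.A} between boundary measures and coherent systems forces $\mu=M^\z$. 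If you wish to salvage your route, this projection-to-finite-levels device is the missing idea for uniqueness, and for invariance it likewise replaces your appeal to $L^2$-symmetry (which itself presupposes the unproven compatibility of the two semigroups) by the coherence of the family $\{M^\z_r\}$.
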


Proof is given in Section \ref{sect8}.

Claim (ii) shows that the z-measures $M^\z$ can be characterized as the
stationary distributions of Markov processes $X^\z$.

Taking as the initial distribution for Markov process $X^\z$ its stationary
distribution we get a stationary in time stochastic process, which we denote by
$\wt X^\z$. Theorem \ref{thm1.A} is complemented by the following result,
established in Section \ref{sect9}:

\begin{theorem}\label{thm1.B}
$\wt X^\z$ can be interpreted as a time-dependent determinantal point process
whose correlation kernel can be explicitly computed.
\end{theorem}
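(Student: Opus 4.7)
The plan is to derive the extended determinantal structure of $\wt X^\z$ in three stages. \emph{Fixed-time}: recall from earlier work on z-measures that under the standard map $\wt\Om\ni\om=(\al,\be,\de)\mapsto \Conf(\om)\subset\R\setminus\{0\}$ sending $\om$ to the configuration $\{\al_i\}\cup\{-\be_j\}$, the measure $M^\z$ becomes a determinantal point process whose correlation kernel $K^\z(x,y)$ admits an integrable (Christoffel--Darboux--type) representation through a biorthogonal system of one-particle functions $\phi_n^\z,\psi_n^\z$, $n\in\Z$. This handles the single-time marginal and fixes the ``one-particle'' space on which the dynamics must eventually be realized.

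\emph{Multi-time}: use the spectral decomposition \eqref{eq1.B}. Since $T^\z(t)\LL^\z_\nu=e^{-t|\nu|}\LL^\z_\nu$ and $|\nu|=\sum_i\nu_i$ splits as a sum over the parts of $\nu$, the semigroup $T^\z(t)$ has a ``free-fermion'' structure: one wants to intertwine it with a one-particle Markov semigroup $T_1(\tau)$ on $C_0(\R\setminus\{0\})$ whose spectral data match $\phi_n^\z,\psi_n^\z$. Once this identification is in place, a standard Eynard--Mehta argument for stationary determinantal Markov processes produces the extended kernel
$$
\mathbf K(s,x;t,y)=\begin{cases}\bigl(K^\z\,T_1(t-s)\bigr)(x,y),& s\le t,\\ -\bigl(T_1(s-t)(I-K^\z)\bigr)(x,y),& s>t,\end{cases}
$$
and shows that for any $t_1\le\dots\le t_k$ the correlation function of $\wt X^\z$ at space-time points $(t_i,x_i)$ equals $\det[\mathbf K(t_i,x_i;t_j,x_j)]_{i,j=1}^{k}$. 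The explicit formula then follows by substituting the known hypergeometric/Laguerre-type expressions for $\phi_n^\z,\psi_n^\z$ and the spectral data of $T_1$, which converts $\mathbf K$ into a double contour integral.

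The principal obstacle, and the conceptual core of the proof, is the second stage: rigorously establishing the one-particle/free-fermion picture. The eigenvalue identity $\Dz\LL^\z_\nu=-|\nu|\LL^\z_\nu$ strongly suggests a decomposition of the many-body semigroup into one-particle energies, but turning this into an honest intertwining of $T^\z(t)$ on $C_0(\wt\Om)$ with a genuine one-particle Feller semigroup on $C_0(\R\setminus\{0\})$ requires expressing each $\LL^\z_\nu$ as a (Schur/Jacobi--Trudi--like) determinant in the one-particle functions $\phi_n^\z$ and verifying that $\Dz$ decomposes correspondingly on a suitable dense core. Once this bridge between the symmetric-function side and the one-particle side is built, the extended kernel formula and its explicit evaluation reduce to an assembly of ingredients already available from the analysis leading to Theorem~\ref{thm1.A} and from the earlier determinantal description of $M^\z$.
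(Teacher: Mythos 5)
Your stage two is not a proof but a statement of the problem, and it is precisely the step that fails as described. The eigenrelation $\Dz\LL^\z_\nu=-|\nu|\LL^\z_\nu$ does \emph{not} supply a free-fermion structure on the boundary: the Laguerre symmetric functions are given by an expansion in Schur functions (formula \eqref{eq8.O}), not by Jacobi--Trudi-type determinants $\det[\phi^\z_{\nu_i-i+\cdot}(x_j)]$ in one-particle functions evaluated at the points of the configuration $\bar\om\in\Conf(\R^*)$, and no one-particle Feller semigroup $T_1$ on $C_0(\R^*)$ intertwining with $T^\z(t)$ is known to exist. The ``standard Eynard--Mehta argument'' you invoke applies to finite systems of non-colliding paths with Karlin--McGregor transition determinants; extending it rigorously to an infinite-particle stationary system is exactly the open difficulty the paper points to in its comparison with infinite-particle Dyson dynamics (Katori--Tanemura, Osada), where such completeness has not been achieved. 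So your plan hinges on constructing a bridge whose existence is the hard open content, and nothing in the analysis behind Theorem \ref{thm1.A} provides it. (Two smaller issues: the $\LL^\z_\nu$ are unbounded on $\wt\Om$, so $T^\z(t)\LL^\z_\nu=e^{-t|\nu|}\LL^\z_\nu$ needs an $L^2(\wt\Om,M^\z)$ interpretation not established in this framework; and you never address why the space-time correlation measures of the infinite-particle process exist at all.)

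The paper's actual route avoids one-particle dynamics entirely: it works at the pre-limit level, where the equilibrium finite-dimensional distributions of the chains $X^\z_r$ on $\Y$ were already shown in \cite{BO-PTRF06} to be determinantal with the \emph{extended discrete hypergeometric kernel}, and then passes to the boundary. Concretely (proofs of Theorems \ref{thm9.A} and \ref{thm9.B}): Corollary \ref{cor3.1}, fed by the semigroup approximation $T^\z_r(t)\to T^\z(t)$ of Proposition \ref{prop8.J}, gives weak convergence of the multi-time distributions $M^\z_r(t_1,\dots,t_n)\to M^\z(t_1,\dots,t_n)$; the bound $|\bar\om\cap B_\epsi|\le\epsi^{-1}|\om|$ together with uniform moment control of $|M^\z_r|$ (a scaled negative binomial law, whose limit is a Gamma distribution) upgrades this to vague convergence of correlation measures and simultaneously proves their existence; and the convergence of the extended discrete hypergeometric kernel to the extended Whittaker kernel (\cite{BO-CMP00}, \cite{BO-PTRF06}) identifies the limit kernel. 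If you want to salvage your approach, you would have to prove the intertwining/determinantal decomposition you describe; as it stands, the proposal has a genuine gap at its self-declared core.
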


Let us explain this claim. Consider the punctured real line
$\R^*:=\R\setminus\{0\}$ and the space $\Conf(\R^*)$ of locally finite point
configurations on $\R^*$. The stationary distribution $M^\z$ can be interpreted
as a probability measure on $\Conf(\R^*)$. More generally, for any finite
collection $t_1<\dots<t_n$ of time moments, the corresponding
finite-dimensional distribution $M^\z(t_1,\dots,t_n)$ of stochastic process
$\wt X^\z$ can be interpreted as a probability measure on the space
$\Conf(\,\underbrace{\R^*\sqcup\dots\sqcup\R^*}_n\,)$. This makes it possible
to describe $M^\z(t_1,\dots,t_n)$ in the language of correlation functions. The
determinantal property claimed in the theorem means that the correlations
functions are given by  $n\times n$ minors extracted from a certain kernel. The
kernel in question, denoted by $K^\z(x,s;y,t)$, has as arguments two space-time
variables, $(x,s)$ and $(y,t)$, where $s\in\R$ and $t\in\R$ are time moments,
while $x\in\R^*$ and $y\in\R^*$ are space positions.

The kernel $K^\z(x,s;y,t)$ appeared first in our paper \cite{BO-PTRF06}, but
there it was derived as the result of a formal limit transition, without
reference to an infinite-dimensional  Markov process. We called $K^\z(x,s;y,t)$
the {\it extended Whittaker kernel\/} to emphasize a similarity with the
well-known dynamical kernels from random matrix theory, the ``extended''
versions of the classical sine, Airy, and Bessel kernels (see Tracy-Widom
\cite{TW-CMP04}).

\subsection{Method of Markov intertwiners}

The results stated above, together with those of \cite{Ols-IMRN12}, were
announced without proofs in the note Olshanski \cite{Ols-POMI10}. The scheme of
the initial proof of Theorem \ref{thm1.A} was the following:

\begin{itemize}
\item Start with the semigroup $\wt T^\z(t)$ in the Hilbert space
$L^2(\wt\Om,M^\z)$ generated by the closure of operator $\Dz$ and show that
$\wt T^\z(t)$ is positivity preserving.

\item Show that $\wt T^\z(t)$ preserves functions from $C_0(\wt\Om)$.

\item Show that the topological support of $M^\z$ is the whole space $\wt\Om$.
\end{itemize}

\noindent The third claim means that the natural map $C_0(\wt\Om)\to
L^2(\wt\Om, M^\z)$ is injective, so that restricting $\wt T^\z(t)$ to
$C_0(\wt\Om)$ gives the desired Feller semigroup $T^\z(t)$.

In the present paper, we use a different approach, based on the {\it method of
Markov intertwiners\/} proposed in Borodin--Olshanski \cite{BO-JFA12}, combined
with the main idea of another recent paper, Borodin--Olshanski \cite{BO-MMJ13}.
To explain this approach, we have first to briefly review what we did in
\cite{BO-JFA12}.

That paper deals with the Gelfand--Tsetlin graph $\GT$ describing the branching
rule for the irreducible characters of unitary groups $U(N)$. The graph is
graded, and its $N$th level $\GT_N$ is a countable set, identified with the
dual object to the unitary group $U(N)$. The graph structure determines a a
sequence of stochastic matrices $\La^2_1,\La^3_2,\dots$, where the $N$th matrix
$\La^{N+1}_N$ has format $\GT_{N+1}\times\GT_N$ and is viewed as a ``link''
connecting the $(N+1)$th and $N$th levels of graph $\GT$. The {\it boundary\/}
of graph $\GT$ is defined as the entrance boundary for the inhomogeneous Markov
chain with varying state spaces $\GT_N$, discrete time parameter ranging over
$\{\dots,3,2,1\}$, and transition function given by the links. The boundary
serves as the space of parameters for the extremal characters of the
infinite-symmetric group $U(\infty)$; this space is a connected,
infinite-dimensional locally compact space. Now, the idea is to find a family
$\{T_N(t): N=1,2,\dots\}$ of Feller semigroups, acting on the spaces
$C_0(\GT_N)$ and compatible with the links in the sense that
$$
T_{N+1}(t)\La^{N+1}_N=\La^{N+1}_NT_N(t), \qquad N=1,2,\dots, \quad t\ge0
$$
(here the operators $T_{N+1}(t)$ and $T_N(t)$ are viewed as matrices of format
$\GT_{N+1}\times\GT_{N+1}$ and $\GT_N\times\GT_N$, respectively). One can say
that the links serve as {\it Markov intertwiners\/} for the semigroups
$T_N(t)$. Given such a family of semigroups, a simple (essentially formal)
argument shows that it gives rise to a ``limit'' Feller semigroup $T_\infty(t)$
generating a Feller process on the boundary.  We showed in \cite{BO-JFA12} that
there is quite a natural way to construct requiring pre-limit semigroups
$T_N(t)$ depending on four additional continuous parameters, and so we obtain a
four-parameter family of limit Feller processes on the boundary.

In the present paper we show that a similar approach works for the Thoma cone
$\wt\Om$. A nontrivial point is what is a suitable substitute of the
Gelfand--Tsetlin graph. As is well known, a natural analog of the
Gelfand--Tsetlin graph is the Young graph, which is the branching graph of the
symmetric group characters. The boundary of the Young graph is an
infinite-dimensional compact space $\Om$, called the {\it Thoma simplex\/}, and
$\wt\Om$ appears as the cone built over $\Om$. Although harmonic analysis on
the infinite symmetric group deals with the Thoma simplex and probability
measures thereof, things go simpler when objects living on $\Om$ are ``lifted''
to $\wt\Om$; this was the main reason for working with the Thoma cone. However,
$\wt\Om$ itself is not a boundary of a branching graph, which was an evident
obstacle for extending the method of \cite{BO-JFA12}.

A solution was found due to the results of \cite{BO-MMJ13}, where we showed
that $\wt\Om$ can be identified with the entrance boundary of a continuous time
Markov chain on the set $\Y$ of all  Young diagrams. This fact enabled us to
apply the formalism of Markov intertwiners with appropriate modifications; in
particular, the discrete index $N=1,2,\dots$ is replaced by continuous index
$r$ ranging over the half-line $\R_{>0}$.

In one direction, the present work goes further than \cite{BO-JFA12}, because
for the processes related to the Gelfand--Tsetlin graph, a result similar to
Theorem \ref{thm1.B} is yet unknown.

\subsection{Comments}

It is natural to compare the results of the present paper to those of
Borodin--Olshanski \cite{BO-PTRF09}, \cite{BO-JFA12}, and Borodin--Gorin
\cite{BG-PTRF12}. In all four papers the authors construct a Feller Markov
process on an infinite-dimensional boundary of a ``projective system''.

The process of \cite{BO-PTRF09} can be obtained by a normalization of the one
we construct here, much similar to the way the Brownian Motion on the sphere
can be obtained from that in the Euclidian space. However, the stationary
distribution of the normalized process does not define a determinantal point
process. Also, in that case the state space is compact, which is much easier to
deal with from the analytic viewpoint.

On the other hand, the process of the present paper is a certain scaling limit
of that from \cite{BO-JFA12}, but in the case of \cite{BO-JFA12} the situation
is more complicated and we were not able to prove there that the time-dependent
correlation functions of the equilibrium process are determinantal (we prove
such a statement in this work). We also do not dispose of an explicit
eigenbasis for the generator there, in contrast to \eqref{eq1.B} above.

The process considered in \cite{BG-PTRF12} was proven to have time-dependent
determinantal structure but it does not possess a stationary distribution,
unlike the three other ones. Also, the underlying state space is quite
different as its coordinates live on a lattice, not on the real line.

Overall, the Markov process we consider in the present paper is the only one so
far that is proven to have all the nice properties one would like to carry over
from the well-known finite dimensional analogs, i.e. Feller property, existence
of a stationary distribution, an explicit description of the (pre)generator and
its eigenbasis, and determinantal formulas for the time-dependent correlations.

To the best of our knowledge, such completeness of the picture was not achieved
in the study of infinite-particle versions of Dyson's Brownian Motion Model
that are also expected to have determinantal time-dependent correlations, see
Jones \cite{LisaJonesThesis08}, Katori--Tanemura \cite{KatoriTanemura-JSP09},
\cite{KatoriTanemura-CMP10}, \cite{KatoriTanemura-MPRF11}, Osada
\cite{Osada-AnnProb13}, \cite{Osada-arXiv12}, Spohn
\cite{Spohn-Dyson-IMAvol87}.

\subsection{Covering  Markov process}
Informally, both the Markov process $X^\z$ on the Thoma cone and its relative,
the Markov process on the boundary of the Gelfand--Tsetlin graph $\GT$, studied
in our paper \cite{BO-JFA12}, may be viewed as interacting particle processes
with {\it nonlocal\/} (or {\it long-range\/}) interaction. On the other hand,
as shown in \cite{BO-JFA12}, the process on the boundary of $\GT$ is
``covered'' by a certain Markov process with local interaction, living on the
path space of $\GT$. In the companion note \cite{BO-Note} we describe a curious
model which conjecturally provides a similar ``covering'' process for $X^\z$.
If the conjectural claims stated in \cite{BO-Note} hold true, this model leads
to an alternative approach to our processes $X^\z$, which looks simple and
intuitively appealing.

\subsection{Organization of the paper}

In Section \ref{sect2} we recall basic facts about Feller semigroups and their
generators, and state a remarkable general theorem from Ethier--Kurtz
\cite{EK05}, which gives a convenient sufficient condition on a matrix of jump
rates ensuring that it generates a Feller Markov chain.

In Section \ref{sect3} we review necessary definitions and facts concerning
convergence of Markov semigroups, taken again from Ethier--Kurtz \cite{EK05}.

Sections \ref{sect4} and \ref{sect5} are devoted to the formalism of Markov
intertwiners (here we present a minimal necessary material and refer to
\cite{BO-JFA12} for more details).

In Section \ref{sect6} we apply the method of Markov intertwiners  to
constructing a concrete one-dimensional diffusion process; our goal here is to
present all the steps of the main construction in a simplified situation.

Short Section \ref{sect7} introduces the Thoma cone and some related objects.

Long Section \ref{sect8} is devoted to the proof of Theorem \ref{thm1.A}; the
argument is developed in strict parallelism with that of Section \ref{sect6}.

Section \ref{sect9} contains the proof of Theorem \ref{thm1.B}.

Finally, in Section \ref{sect10} we briefly describe a Plancherel-type
degeneration of our main construction.

\subsection{Acknowledgements}

A.~B. was partially supported by NSF-grant DMS-1056390. G.~O. was partially
supported by a grant from Simons Foundation (Simons--IUM Fellowship), the
RFBR-CNRS grant 10-01-93114, and the project SFB 701 of Bielefeld University.

\section{Feller semigroups}\label{sect2}

Let $E$ be a locally compact, noncompact, metrizable separable space. Denote by
$C(E)$ the Banach space of real-valued continuous functions on $E$ with the
uniform norm
$$
\Vert f\Vert=\sup_{x\in E}|f(x)|.
$$
Let $C_0(E)\subset C(E)$ denote its closed subspace formed by the functions
vanishing at infinity, and let $C_c(E)$ be the dense subspace of $C_0(E)$
consisting of compactly supported functions.

If $E$ is a discrete countable space, then the continuity requirement
disappears, $C_0(E)$ becomes the space of arbitrary real functions on $E$
vanishing at infinity, and $C_c(E)$ becomes the subspace of finitely supported
functions.

\begin{definition}
A {\it Feller semigroup\/} $\{T(t): t\ge0\}$ is a strongly continuous,
positive, conservative contraction semigroup on $C_0(E)$, see \cite[p.
166]{EK05}.
\end{definition}

Note that in \cite{EK05}, the conservativeness condition is stated in terms of
the semigroup generator. Here are two equivalent reformulations of this
property (see also Liggett \cite[Chapter 3]{Lig}:

\begin{itemize}

\item For any fixed $x\in E$ and $t\ge0$, one has
$$
\sup\{(T(t)f)(x): f\in C_0(E), \; 0\le f\le1\}=1
$$
(if $E$ is compact, then this simply means that $T(t)$ preserves the constant
function $1$).

\smallskip

\item The semigroup admits a transition function, where we mean that a
transition function $P(t\mid x, \,\cdot\,)$ is a probability measure (not a
sub-probability one!) for all $t\ge0$ and $x\in E$.

\end{itemize}

Assume now that $E$ is a countably infinite set and $Q=[Q(a,b)]$ is a matrix of
format $E\times E$ such that
\begin{equation}\label{eq2.8}
\textrm{$Q(a,b)\ge0$ for all $a\ne b$ and $-Q(a,a)=\sum_{b:\, b\ne
a}Q(a,b)<+\infty$ for all $a\in E$}.
\end{equation}
Then there is a constructive way to define a semigroup $\{P_{\min}(t):t\ge0\}$
of {\it substochastic\/} matrices, which provides the {\it minimal solution\/}
to Kolmogorov's backward and forward equations,
$$
\frac{d}{dt}\, P(t)=Q P(t),\qquad \frac{d}{dt}\, P(t)=P(t)Q,
$$
see Feller \cite{Fel40} and Liggett \cite[Chapter 2]{Lig}.

\begin{definition}
One says that $Q$ is {\it regular\/} if the matrices $P_{\min}(t)$ from the
minimal solution are stochastic.
\end{definition}

If the $Q$-matrix is regular, then $P_{\min}(t)$ is a unique solution to both
the backward and forward Kolmogorov equations. Qualitatively, regularity of the
$Q$-matrix means that the Markov chain is non-exploding: one cannot escape to
infinity in finite time.

\smallskip

Recall a few general notions (see Ethier--Kurtz \cite[Chapter 1, Sections
1--3]{EK05}). Any strongly continuous contractive semigroup on a Banach space
is uniquely determined by its {\it generator\/}, which is a  densely defined
closed dissipative operator. We will denote generators by symbol $A$ (possibly
with additional indices), and $\dom A$ will denote the domain of $A$. A {\it
core\/} of a generator $A$ is a subspace $\F\subseteq\dom A$ such that the
closure of the operator $A|_{\F}$ (the restriction of $A$ to $\F$) coincides
with $A$ itself; thus $A$ is uniquely determined by its restriction to a core.
It often happens that an explicit description of $\dom(A)$ is unavailable but
one can write down the action of $A$ on a core $\F$, and then the {\it
pre-generator\/} $A|_{\F}$ serves as a substitute of $A$.

We will need a result from Ethier--Kurtz \cite{EK05} which provides a
convenient sufficient condition of regularity together with important
additional information:

\begin{theorem}\label{thm2.1}
Let $E$ be a countably infinite set and $Q=[Q(a,b)]$ be a matrix of format
$E\times E$ satisfying \eqref{eq2.8}.  Assume additionally that $Q$ has
finitely many nonzero entries in every row and every column, and there exist
strictly positive functions $\ga(a)$ and $\eta(a)$ on $E$ that tend to
$+\infty$ at infinity and are such that
\begin{gather}
-Q(a,a)\le C\ga(a), \quad \forall a\in E, \label{eq2.3}\\
 Q\frac1{\ga}\le \frac C\ga \quad \text{\rm pointwise}\label{eq2.1}\\
Q\eta\le C\eta \quad \text{\rm pointwise} \label{eq2.2}
\end{gather}
where $C$ is a positive constant and, for an arbitrary function $f(a)$ on $E$,
the notation $Qf$ means the function
$$
(Qf)(a)=\sum_{b\in E}Q(a,b)f(b)=\sum_{b\in E, \, b\ne a}Q(a,b)(f(b)-f(a)),
$$
the sum being finite because of the row finiteness condition.

Under these hypotheses we have{\rm:}

{\rm(i)} $Q$ is regular and so determines a Markov semigroup $P(t)$.

{\rm(ii)} This semigroup induces a Feller semigroup $\{T(t):t\ge0\}$ on
$C_0(E)$.

{\rm(iii)} Let $A$ denote the generator of $T(t)$; its domain  $\dom(A)$
consists of those functions $f\in C_0(E)$ for which $Qf\in C_0(E)$. Moreover,
$A=Q$ on $\dom A$.

{\rm(iv)} The subspace $C_c(E)\subset C_0(E)$ of compactly supported functions
is a core for $A$.
\end{theorem}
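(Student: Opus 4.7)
My plan is to follow the classical approach to constructing a Feller Markov chain from an unbounded $Q$-matrix: build the minimal substochastic semigroup, use the Lyapunov function $\eta$ to rule out explosion, use $1/\gamma$ to obtain the Feller property, and identify the generator and a core via Hille--Yosida.

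The first step is to construct the minimal non-negative solution $P_{\min}(t)$ of Kolmogorov's backward equation $(d/dt)P(t) = QP(t)$ in the standard way---the jump-and-hold construction up to the explosion time $\zeta$, or equivalently the monotone limit of finite-state truncations of $Q$---obtaining a positive substochastic semigroup. For regularity (part (i)), let $\tau_n$ denote the first exit time from the finite sub-level set $\{a : \eta(a) < n\}$. Dynkin's formula applied to the stopped chain $X_{t\wedge\tau_n}$, combined with \eqref{eq2.2} and Gronwall, yields $E_a[\eta(X_{t\wedge\tau_n})] \le e^{Ct}\eta(a)$; since $\eta(X_{\tau_n}) \ge n$ on $\{\tau_n \le t\}$, this gives $P_a(\tau_n \le t) \le e^{Ct}\eta(a)/n \to 0$ as $n\to\infty$. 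Hence $\zeta = \infty$ almost surely, and $P_{\min}(t)\mathbf{1} = \mathbf{1}$.

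For the Feller property (ii), a Gronwall argument using the integrated forward equation $P(t)(1/\gamma) = 1/\gamma + \int_0^t P(s)Q(1/\gamma)\,ds$ together with \eqref{eq2.1} yields $P(t)(1/\gamma) \le e^{Ct}/\gamma$ pointwise. Any $f \in C_c(E)$ satisfies $|f| \le M_f/\gamma$ with $M_f := \Vert f \Vert \cdot \max_{a\in\operatorname{supp} f}\gamma(a)$, so $|T(t)f| \le M_f e^{Ct}/\gamma \in C_0(E)$. Hence $T(t)C_c(E)\subseteq C_0(E)$, and by density and contractivity $T(t)C_0(E)\subseteq C_0(E)$. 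Strong continuity reduces to the case $f \in C_c(E)$: hypothesis \eqref{eq2.3} and row-finiteness give pointwise convergence $T(t)f(a) \to f(a)$ uniformly on any fixed finite set, while the $1/\gamma$-estimate makes $T(t)f$ uniformly small outside a sufficiently large finite set. For (iii), the forward equation gives $C_c(E) \subseteq \dom A$ with $A = Q$ there, and the reverse inclusion $\{f \in C_0(E) : Qf \in C_0(E)\} \subseteq \dom A$ follows from a cut-off argument on sub-level sets of $\gamma$. For (iv), I would verify the standard core criterion that $(\lambda - Q)(C_c(E))$ is dense in $C_0(E)$ for some $\lambda > 0$: for $g \in C_c(E)$ the resolvent $R_\lambda g = \int_0^\infty e^{-\lambda t}T(t)g\,dt$ lies in $\dom A$, and the $1/\gamma$-estimate allows one to approximate it in the graph norm of $A$ by compactly supported functions.

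The main delicate point I anticipate is justifying the pointwise Gronwall and supermartingale estimates when $Q$ is unbounded and $\eta$, $1/\gamma$ need not be compactly supported: semigroup functional calculus is unavailable, so one must work either with finite truncations of $Q$ and pass to a monotone limit, or apply Dynkin's formula to the minimal chain stopped on finite sub-level sets, using row and column finiteness of $Q$ to interchange summation and differentiation. Once this technicality is handled, the rest of the argument is essentially mechanical.
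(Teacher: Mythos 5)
Your construction is necessarily a different route from the paper's, because the paper proves Theorem \ref{thm2.1} in one line, by adapting Theorem 3.1 of Ethier--Kurtz \cite{EK05}, Chapter 8, whose hypotheses are precisely the triple \eqref{eq2.3}--\eqref{eq2.2}. Measured against that cited result, your parts (i) and (ii) are correct and standard: the Dynkin--Gronwall bound $\E_a[\eta(X_{t\wedge\tau_n})]\le e^{Ct}\eta(a)$, with $\tau_n$ the exit time from the finite set $\{\eta<n\}$, does rule out explosion, and the estimate $P_{\min}(t)(1/\ga)\le e^{Ct}/\ga$ is obtained exactly by the truncation you flag: the semigroups killed off $\{\ga<n\}$ increase monotonically to $P_{\min}(t)$, and discarding nonnegative off-diagonal terms only improves \eqref{eq2.1} and \eqref{eq2.2}. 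The forward half of (iii) is also fine once you note that column finiteness gives $Q\,C_c(E)\subseteq C_c(E)$, so that the integrated forward equation makes sense in $C_0(E)$.

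The genuine gap is in the two steps you settle by a ``cut-off argument'': the reverse inclusion $\{f\in C_0(E):Qf\in C_0(E)\}\subseteq\dom(A)$ in (iii), and the core property (iv). Sharp truncation does not converge in graph norm under these hypotheses. The only decay your estimates yield for $f=R_\lambda g$ with $g\in C_c(E)$ is $|f|\le M_g\,((\lambda-C)\ga)^{-1}$, while \eqref{eq2.3} allows $-Q(a,a)$ to be as large as $C\ga(a)$; hence for $f_n:=f\,\mathbf{1}_{\{\ga<n\}}$ and $a$ just outside the cut-off level, $(Qf_n)(a)$ loses the diagonal term $Q(a,a)f(a)$, which may be of order one uniformly in $n$ (and the near-boundary off-diagonal terms are of the same size). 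In $Qf=\lambda f-g\in C_0(E)$ this large diagonal term is cancelled by the off-diagonal sum; truncation destroys exactly that cancellation, and the hypotheses supply no decay of $R_\lambda g$ beyond $1/\ga$ with which to beat the factor $C\ga(a)$. So $\Vert Qf_n-Qf\Vert$ need not tend to $0$, and as written neither the reverse inclusion in (iii) nor (iv) closes.

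Both steps can be closed by short standard arguments. For (iii): $\lambda-Q$ is injective on $D:=\{f\in C_0(E):Qf\in C_0(E)\}$ by the positive maximum principle (at a point $a_0$ where $u\in D$ attains a positive maximum, $(Qu)(a_0)\le0$, so $(\lambda-Q)u=0$ forces $u\le0$, and likewise for $-u$); since $A=Q$ on $\dom(A)\subseteq D$ and $\lambda-A$ is surjective, every $f\in D$ equals $(\lambda-A)^{-1}(\lambda-Q)f\in\dom(A)$. For (iv), prove instead that $(\lambda-Q)\,C_c(E)$ is dense in $C_0(E)$ for $\lambda>C$, by duality: if $\mu\in\ell^1(E)=C_0(E)^*$ annihilates this range, then testing on the delta functions $\mathbf{1}_b$ gives $\mu Q=\lambda\mu$ (the relevant sums are finite by column finiteness), whence pointwise $\lambda|\mu|\le|\mu|Q$. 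Pairing with $1/\ga$ is legitimate, since \eqref{eq2.1} together with \eqref{eq2.3} gives $\sum_{b\ne a}Q(a,b)/\ga(b)\le C/\ga(a)+C$, so Fubini applies; this yields $\lambda\langle|\mu|,1/\ga\rangle\le\langle|\mu|Q,1/\ga\rangle=\langle|\mu|,Q(1/\ga)\rangle\le C\langle|\mu|,1/\ga\rangle$, forcing $\mu=0$. Note that this is where \eqref{eq2.3} is indispensable; in your sketch it is invoked only for strong continuity, where the finiteness assumption \eqref{eq2.8} already suffices.
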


\begin{proof}
This is an adaptation of Theorem 3.1 in \cite[Chapter 8]{EK05}, which actually
holds under less restrictive assumptions.
\end{proof}

\section{Convergence of semigroups and Markov processes}\label{sect3}

\subsection{Convergence of semigroups}
Let $I$ be one of the sets $\R_{>0}$ (strictly positive real numbers) or
$\Z_{>0}$ (strictly positive integers). Assume that $\{\LLL_r:r\in I\}$ is a
family of real Banach spaces, $\LLL_\infty$ is one more real Banach space, and
for every $r\in I$ we are given  a contractive linear operator
$\pi_r:\LLL_\infty\to \LLL_r$. If $f$ is a vector of one of these spaces, then
$\Vert f\Vert$ denotes its norm.

\begin{definition}\label{def3.A}
We say that {\it vectors $f_r\in \LLL_r$ approximate a vector
$f\in\LLL_\infty$\/} and write $f_r\to f$ if
$$
\lim_{r\to\infty}\Vert f_r-\pi_r f\Vert=0.
$$
\end{definition}

\begin{definition}\label{def3.B}
Let $\{T_\infty(t): t\ge0\}$ and $\{T_r(t):t\ge0\}$ be strongly continuous
contraction semigroups on $\LLL_\infty$ and $\LLL_r$. We say that {\it the
semigroups $T_r(t)$ approximate the semigroup $T_\infty(t)$\/} and write
$T_r(t)\to T_\infty(t)$ if
\begin{equation}\label{eq3.1}
\lim_{r\to\infty}\sup_{0\le t\le t_0}\Vert T_r(t)\pi_r f-\pi_r
T_\infty(t)f\Vert= 0 \qquad \textrm{for all $f\in\LLL_\infty$ and any $t_0>0$}.
\end{equation}
\end{definition}

Our aim is to check this condition using an appropriate convergence of
semigroup generators. So let $A_\infty$ and $A_r$ denote the generators of the
above semigroups and let  $\dom(A_\infty)$, $\dom(A_r)$ be the domains of the
generators.

\begin{definition}\label{def3.C}
Fix a core $\F\subseteq\dom(A)$. We say that the operator $A_\infty\vert_\F$ is
{\it approximated\/} by the operators $A_r$ if for any vector $f\in\F$  one can
find a family of vectors $\{f_r\in \dom(A_r): r\in I\}$ such that $f_r\to f$,
and $A_r f_r\to A_\infty f$ as $r\to\infty$.
\end{definition}

In other words, this kind of operator convergence means that every vector from
the graph of $A_\infty\vert_\F$ can be approximated by vectors from the graphs
of the operators $A_r$.

\begin{theorem}\label{thm3.2}
Let $T_\infty(t)$, $T_r(t)$, $A_\infty$, $A_r$, and $\F$ be as above. If
$A_\infty\vert_\F$ is approximated by the operators $A_r$, then $T_r(t)\to
T_\infty(t)$ in the sense of Definition \ref{def3.B}.
\end{theorem}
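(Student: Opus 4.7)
The statement is an instance of the Trotter--Kato--Kurtz semigroup approximation theorem in the varying Banach spaces setup of Ethier--Kurtz \cite[Ch.~1, Sect.~6]{EK05}, and the plan is to follow the standard two-step route: first upgrade the generator hypothesis (given only on the core $\F$) to convergence of resolvents, then pass from resolvent convergence to semigroup convergence by means of the Yosida approximation.

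For the resolvent step, fix $\lambda > 0$ and set $R_\lambda^\infty := (\lambda - A_\infty)^{-1}$ and $R_\lambda^r := (\lambda - A_r)^{-1}$; by Hille--Yosida these are well-defined contractions with norm at most $1/\lambda$. Given $f \in \F$ I would choose the approximating family $f_r \in \dom(A_r)$ supplied by Definition \ref{def3.C}, so that, writing $g := (\lambda - A_\infty) f$, one has $(\lambda - A_r) f_r - \pi_r g \to 0$ in norm. Applying the contraction $R_\lambda^r$ and using the identity $R_\lambda^r (\lambda - A_r) f_r = f_r$ yields
$$
\| f_r - R_\lambda^r \pi_r g \| \le \lambda^{-1} \| (\lambda - A_r) f_r - \pi_r g \| \to 0,
$$
hence $R_\lambda^r \pi_r g \to f = R_\lambda^\infty g$ in the sense of Definition \ref{def3.A}. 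Since $\F$ is a core for $A_\infty$, the range $(\lambda - A_\infty)\F$ is dense in $\LLL_\infty$; combined with the uniform contraction bound $\|R_\lambda^r \pi_r\| \le 1/\lambda$ this extends resolvent convergence to arbitrary $h \in \LLL_\infty$ by a routine $\varepsilon/3$ density argument.

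For the passage from resolvents to semigroups, introduce the Yosida approximants $A_r^{(\lambda)} := \lambda^2 R_\lambda^r - \lambda I$ and $A_\infty^{(\lambda)} := \lambda^2 R_\lambda^\infty - \lambda I$, which are bounded dissipative operators. Their exponentials are norm-convergent power series in the corresponding resolvents, with terms dominated by $(\lambda t)^k/k!$; iterating the contraction estimate above gives $(R_\lambda^r)^k \pi_r h \to (R_\lambda^\infty)^k h$ for every $k \ge 0$, and dominated summation then produces $\exp(t A_r^{(\lambda)}) \pi_r h \to \exp(t A_\infty^{(\lambda)}) h$ uniformly in $t$ on any compact interval. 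Coupling this with the Hille--Yosida estimate $\exp(t A^{(\lambda)}) \to T(t)$ as $\lambda \to \infty$ (uniform on $[0, t_0]$ and applicable to both $A_\infty$ and the $A_r$ because all semigroups are contractions), an $\varepsilon/3$ argument in which one first chooses $\lambda$ large and then sends $r \to \infty$ delivers \eqref{eq3.1}.

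The principal technical nuisance I anticipate is arranging the Yosida approximation error $\|\exp(t A_r^{(\lambda)}) \pi_r h - T_r(t) \pi_r h\|$ to be small uniformly in $r$ as $\lambda \to \infty$. This is handled by approximating $h$ by elements of a dense subset where $\|\lambda R_\lambda^\infty h - h\|$ admits a quantitative bound, transferring the bound to the $R_\lambda^r$ via the already established resolvent convergence, and exploiting contractivity of $\pi_r$, $R_\lambda^r$, and $T_r(t)$ throughout; no genuinely new ingredient is required beyond the contraction bookkeeping.
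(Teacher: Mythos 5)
Your proposal is correct, but it takes a different route from the paper: the paper does not prove the theorem at all, it simply cites Ethier--Kurtz \cite[Chapter 1, Theorem 6.1]{EK05} (of which this statement is a part) and adds the one-line observation that the continuous index set $I=\R_{>0}$ reduces to $I=\Z_{>0}$, since condition \eqref{eq3.1} is equivalent to the same limit relation along every sequence of reals tending to $+\infty$. What you have done is reconstruct, in outline, the standard proof of that cited theorem: core-to-resolvent convergence via the identity $f_r - R^r_\lambda \pi_r g = R^r_\lambda\bigl((\lambda-A_r)f_r - \pi_r g\bigr)$ together with density of $(\lambda-A_\infty)\F$ (the same Hille--Yosida core criterion the paper itself invokes in Proposition \ref{prop5.A}), then resolvent-to-semigroup convergence via Yosida approximants. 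Your steps check out: the telescoping bound for $(R^r_\lambda)^k\pi_r h$, the dominated summation of the exponential series, and --- the one genuinely delicate point, which you correctly isolate --- the uniform-in-$r$ Yosida error, handled by writing $(\lambda R^r_\lambda - I)A_r f_r$ in terms of $(\lambda R^r_\lambda - I)\pi_r A_\infty f$ plus an $o(1)$ term and taking limits in the order ``first $\lambda$ large, then $r\to\infty$''; that order is essential and you state it. A side benefit of your argument is that it works verbatim for the continuous index set, so the paper's discretization remark becomes unnecessary. What the paper's approach buys is brevity and a slightly stronger imported statement (the Ethier--Kurtz theorem holds under weaker hypotheses, e.g.\ only $\sup_r\Vert\pi_r\Vert<\infty$); what yours buys is self-containedness, at the cost of the contraction bookkeeping you describe, which would need to be written out in full (in particular the estimate $\Vert e^{tA^{(\lambda)}_r}f_r - T_r(t)f_r\Vert \le t\,\Vert(\lambda R^r_\lambda - I)A_r f_r\Vert$ and the final five-term decomposition) to count as a complete proof rather than a plan.
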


\begin{proof}
For $I=\Z_{>0}$, this is part of Ethier--Kurtz \cite[Chapter 1, Theorem
6.1]{EK05}. The case $I=\R_{>0}$ is immediately reduced to the case
$I=\Z_{>0}$, because condition \eqref{eq3.1} is equivalent to saying that the
same limit relation holds along any sequence of positive real numbers tending
to $+\infty$.
\end{proof}

\subsection{Convergence of Markov processes}
Below we use the term {\it Markov process\/} as a shorthand for a Markov family
which may start from any given point of the state space or from any given
initial probability distribution. We are dealing exclusively with processes
stationary in time and with infinite life time.

Given an initial distribution $M(0)$ of a Markov process on a space $E$, one
may speak about its finite-dimensional distributions $M(t_1,\dots,t_k)$
corresponding to any prescribed time moments $0\le t_1<\dots<t_k$,
$k=1,2,\dots$\,. Every such distribution $M(t_1,\dots,t_k)$ is a probability
measure on the $k$-fold direct product $E^k=E\times\dots\times E$.

Let $E$ be a locally compact metrizable space and $T(t)$ be a Feller semigroup
on $C_0(E)$; then $T(t)$ gives rise to a Markov process $X(t)$ on $E$ with
c\`adl\`ag sample trajectories, see Ethier--Kurtz \cite[Chapter 4, Section
2]{EK05}. The finite-dimensional distributions of $X(t)$ are determined by the
semigroup $T(t)$ in the following way: For arbitrary functions
$g_1,\dots,g_k\in C_0(E)$, define recursively functions $h_k,\dots,h_0$ by
\begin{multline}\label{eq3.5}
h_k=g_k, \quad h_{k-1}=g_{k-1}\cdot(T(t_k-t_{k-1})h_k),\, \dots\\
\dots,\, h_1=g_1\cdot(T(t_2-t_1)h_2), \quad h_0=T(t_1)h_1,
\end{multline}
where dots mean pointwise product, so that $h_{k-1}$ is obtained by applying
operator $T(t_k-t_{k-1})$ to $h_{k-1}$ and then multiplying the resulting
function by $g_{k-1}$, etc. Then
\begin{equation}\label{eq3.6}
\langle g_1\otimes\dots\otimes g_k, M(t_1,\dots,t_k)\rangle= \langle h_0,
M(0)\rangle,
\end{equation}
where the angle brackets denote the canonical pairing between functions and
measures, and $(g_1\otimes\dots\otimes g_k)(x_1,\dots,x_k)=g_1(x_1)\dots
g_k(x_k)$ for $(x_1,\dots,x_k)\in E^k$ (this is a function from $C_0(E^k)$).

Let $X_r(t)$ and $X(t)$ be Markov processes with state spaces $E_r$ and $E$,
respectively (as before,  $r$ ranges over the index set $I$, which is either
$\R_{>0}$ or $\Z_{>0}$). Assume that $E$ is a locally compact metrizable
separable space and each $E_r$ is realized as a discrete locally finite subset
of $E$. Further, assume that as $r\to\infty$, $E_r$ becomes more and more dense
in $E$; more precisely, we postulate that any probability measure $P$ on $E$
can be represented as the weak limit $w$-$\lim_{r\to\infty}P_r$, where $P_r$ is
a probability measure supported by $E_r$.

\begin{definition}\label{def3.D}
Under these assumptions we say that the processes $X_r(t)$ {\it approximate\/}
the process $X(t)$ and write  $X_r(t)\to X(t)$ if whenever an initial
distribution $M(0)$ for the process $X(t)$ is represented as a weak limit of a
family $\{M_r(0)\}$ of initial distributions of processes $X_r(t)$, we have
$$
w\text{-}\lim_{r\to\infty}M_r(t_1,\dots,t_k)=M(t_1,\dots,t_k),
$$
meaning weak convergence on $E^k$ of the finite-dimensional distributions
corresponding to any given time moments $0<t_1<\dots<t_k$, $k=1,2,\dots$\,.
\end{definition}

\begin{corollary}\label{cor3.1}
Under the above assumptions, assume additionally that the Markov processes
$X_r(t)$ and $X(t)$ come from some Feller semigroups on the Banach spaces
$\LLL_r=C_0(E_r)$ and $\LLL=C_0(E)$, respectively. Further, let the projection
$\pi_r:\LLL\to\LLL_r$ be defined as the restriction map from $E$ to $E_r$.

If the hypotheses of Theorem \ref{thm3.2} are satisfied, then $X_r(t)\to X(t)$
in the sense of Definition \ref{def3.D}.
\end{corollary}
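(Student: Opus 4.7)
The plan is to combine Theorem \ref{thm3.2}, which yields semigroup convergence $T_r(t)\to T(t)$ in the sense of Definition \ref{def3.B}, with the recursive formula \eqref{eq3.5}--\eqref{eq3.6} that expresses finite-dimensional distributions as semigroup applications to initial data. Fix time moments $0<t_1<\cdots<t_k$ together with test functions $g_1,\dots,g_k\in C_0(E)$. Since finite-dimensional distributions are determined by their pairings with tensor products of such functions, it will suffice to prove
\[
\langle g_1\otimes\dots\otimes g_k,\,M_r(t_1,\dots,t_k)\rangle\longrightarrow\langle g_1\otimes\dots\otimes g_k,\,M(t_1,\dots,t_k)\rangle
\]
and then upgrade this to weak convergence on $E^k$.

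The inductive core of the argument runs as follows. First, I would strengthen Theorem \ref{thm3.2} to the statement that if $f_r\to f$ in the sense of Definition \ref{def3.A}, then $T_r(t)f_r\to T(t)f$ uniformly for $t$ in any compact time interval; this follows by inserting $T_r(t)\pi_r f$ as an intermediate and using contractivity of $T_r(t)$. Second, since $\pi_r$ commutes with pointwise multiplication on $E$ and the relevant approximants are uniformly bounded in $r$, the approximation relation also passes through pointwise products. Applying these two facts by backward induction on the recursion \eqref{eq3.5}, starting from $h_k^{(r)}:=\pi_r g_k$ and building $h_j^{(r)}$ in parallel with $h_j$ using the semigroups $T_r$ in place of $T$, one obtains $h_0^{(r)}\to h_0$. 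Finally, decompose
\[
\langle h_0^{(r)},M_r(0)\rangle=\langle h_0^{(r)}-\pi_r h_0,\,M_r(0)\rangle+\int_E h_0\,dM_r(0);
\]
the first term is bounded in absolute value by $\|h_0^{(r)}-\pi_r h_0\|\to 0$ (because $M_r(0)$ has mass one), while the second converges to $\int_E h_0\,dM(0)$ since $M_r(0)\to M(0)$ weakly and $h_0\in C_0(E)\subset C_b(E)$. Formula \eqref{eq3.6} then delivers the desired convergence of the pairings.

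To pass from tensor-product test functions to weak convergence on $E^k$, I would invoke the fact that the linear span of tensor products of $C_0(E)$-functions is uniformly dense in $C_0(E^k)$ by a Stone--Weierstrass argument, so the previous step gives vague convergence $M_r(t_1,\dots,t_k)\to M(t_1,\dots,t_k)$. Since the limit is a probability measure and the prelimit measures have total mass one, a standard tightness argument (choose a compact $K\subset E^k$ on which the limit has almost full mass and approximate $\mathbf{1}_K$ from below by $C_c(E^k)$ functions) shows that no mass escapes to infinity, promoting vague to weak convergence. I expect the main obstacle to be purely bookkeeping: tracking how the approximation relation propagates through the alternating applications of semigroups and pointwise multiplications in \eqref{eq3.5}, while keeping intermediate sup-norms controlled uniformly in $r$. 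Once that is laid out carefully, every remaining step reduces to soft analysis, already available once Theorem \ref{thm3.2} is in hand.
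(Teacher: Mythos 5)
Your proposal is correct and follows essentially the same route as the paper: backward induction through the recursion \eqref{eq3.5}, with each step $i\to i-1$ justified by Theorem \ref{thm3.2}, followed by density of tensor products $g_1\otimes\dots\otimes g_k$ in $C_0(E^k)$. You merely make explicit three points the paper leaves to the reader --- the contractivity insertion showing $T_r(t)f_r\to T(t)f$, the multiplicativity of the restriction map $\pi_r$ under pointwise products, and the upgrade from vague to weak convergence for probability measures --- all of which are correct.
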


Note that $\pi_r$ is well defined as a map from $C_0(E)$ to $C_0(E_r)$ because
$E_r$ is assumed to be locally finite, so that if a sequence of points goes to
infinity along $E_r$ then it also goes to infinity in $E$.

\begin{proof}
It suffices to prove that
\begin{equation}\label{eq3.7}
\lim_{r\to\infty}\langle g_1\otimes\dots\otimes g_k,
M_r(t_1,\dots,t_k)\rangle=\langle g_1\otimes\dots\otimes g_k,
M(t_1,\dots,t_k)\rangle
\end{equation}
for any collection $g_1,\dots,g_k\in C_0(E)$, because the functions of the form
$g_1\otimes\dots\otimes g_k$ are dense in $C_0(E^k)$.

Let $h_k,\dots,h_0\in C_0(E)$ be defined as in \eqref{eq3.5} and, for each
$r\in I$, let $h_{k;r},\dots,h_{0;r}\in C_0(E_r)$ be defined in the same way,
starting from the collection
$$
g_{1;r}:=\pi_r(g_1),\; g_{2;r}:=\pi_r(g_2),\; \dots,\; g_{k;r}:=\pi_r(g_k).
$$
By virtue of \eqref{eq3.6}, the desired limit relation \eqref{eq3.7} is
equivalent to
\begin{equation*}
\lim_{r\to\infty}\langle h_{0;r}, M_r(0)\rangle=\langle h_0, M(0)\rangle
\end{equation*}
Since $w$-$\lim_{r\to\infty}M_r(0)=M(0)$ by assumption, it suffices to prove
that
$$
\lim_{r\to\infty}\Vert h_{0;r}-\pi_r h_0\Vert= 0.
$$
To do this, we prove step by step that
$$
\lim_{r\to\infty}\Vert h_{i;r}-\pi_r h_i\Vert= 0,
$$
for $i=k,\dots,0$, where each transition $i\to i-1$ is justified by making use
of Theorem \ref{thm3.2}.
\end{proof}

This argument is patterned from the proof of Theorem 2.5 in \cite[Chapter
4]{EK05}. Note also that another kind of convergence is established in
\cite[Chapter 4, Theorem 2.11]{EK05}.

\section{Feller projective systems}\label{sect4}

\subsection{Links}

Let $E'$ and $E$ be two measurable spaces. Recall that a  {\it Markov kernel\/}
linking $E'$ to $E$ is a function $\La(\,\cdot\,,\,\cdot\,)$ in two variables,
one ranging over $E'$ and the other ranging over measurable subsets of $E$,
such that $\La$ is measurable with respect to the first argument and is a
probability measure relative to the second argument. We use the notation $\La:
E'\dasharrow E$ and call $\La$ a {\it link\/} between $E'$ and $E$.

If $E$ is a discrete set, then, setting $\La(x,y):=\La(x,\{y\})$, we may regard
$\La$ as a function on $E'\times E$. If both $E'$ and $E$ are discrete, then
$\La$ is simply a stochastic matrix of format $E'\times E$.

The operation of composition of two links $E''\dasharrow E'$ and $E'\dasharrow
E$ is defined in a natural way: denoting the first link by $\La^{E''}_{E'}$ and
the second one by $\La^{E'}_E$ we have
$$
(\La^{E''}_{E'}\La^{E'}_E)(x,dz)=\int_{y\in
E'}\La^{E''}_{E'}(x,dy)\La^{E'}_E(y,dz).
$$
In the discrete case this operation reduces to conventional matrix product.

The possibility of composing links makes it possible to regard them as
morphisms in a category whose objects are measurable spaces, see
\cite{BO-MMJ13}. However, links are not ordinary maps; this is why we denote
them by the dash arrow.

A link $\La:E'\dasharrow E$ takes a probability measure $M$ on $E'$ to a
probability measure $M\La$ on $E$:
$$
(M\La)(dy)=\int_{x\in E'}M'(dx)\La(x,dy).
$$
If both spaces are discrete then measures may be viewed as row-vectors and then
the product $M\La$ becomes the conventional product of a row-vector by a
matrix.

Dually, $\La$ determines a contractive linear map $B(E)\to B(E')$ between the
Banach spaces of bounded measurable functions, denoted as $F\mapsto \La F$:
$$
(\La F)(x)=\int_{y\in E}\La(x,dy)F(y).
$$
In the discrete case, functions may be viewed as column-vectors and then $\La
F$ becomes the conventional product of a matrix by a column-vector.

We say that a link $\La: E'\to E$  between two locally compact spaces is a {\it
Feller link\/} if the corresponding linear map $B(E)\to B(E')$ sends
$C_0(E)\subset B(E)$ to $C_0(E')\subset B(E')$.

If $E$ is discrete, then this condition means that for any fixed $y\in E$, the
function $x\mapsto\La(x,y):=\La(x,\{y\})$ on $E'$ lies in $C_0(E')$.

\subsection{Projective systems and boundaries}

Let, as above, $I$ denote one of the two sets $\R_{>0}$  or $\Z_{>0}$. By a
{\it projective system\/} with index set $I$ we mean a family $\{E_r: r\in I\}$
of discrete spaces together with a family of links
$\{\La^{r'}_r:E_{r'}\dasharrow E_r: r'>r\}$, where every $E_r$ is finite or
countably infinite, and for any triple $r''>r'>r$ of indices one has
$\La^{r''}_{r'}\La^{r'}_r=\La^{r''}_r$; see \cite{BO-MMJ13}. If $I=\Z_{>0}$,
then it suffices to specify the links $\La^{r'}_r$ for neighboring indices
$r'=r+1$ and then set
$$
\La^{r'}_r:=\La^{r'}_{r'-1}\dots\La^{r+1}_r
$$
for arbitrary couples $r'>r$.

(The above definition is applicable to more general ordered index sets but we
would like to avoid excessive formalism. For the purpose of the present paper
we need the continuous index set $I=\R_+$. Concrete projective systems with
discrete index sets are considered in \cite{BO-JFA12} and \cite{BO-MMJ13}. In
some general considerations (see below) the case $I=\R_{>0}$ is readily reduced
to that of $I=\Z_{>0}$.)

Following \cite{BO-MMJ13}, we define the {\it boundary\/} $E_\infty$ of a
projective system $\{E_r, \La^{r'}_r\}$ in the following way. Consider the
projective limit space $\varprojlim\M(E_r)$, where $\M(E_r)$ stands for the set
of probability measures on $E_r$ and the limit is taken with respect to the
projections $\M(E_{r'})\to\M(E_r)$ induced by the links $\La^{r'}_r$. Assuming
that the projective limit space is nonempty, we take as $E_\infty$ the set of
its extreme points.

We refer to \cite{BO-MMJ13} for more details.  Note that $\M(E_r)$ may be
viewed as a simplex with vertex set $E_r$, and every projection
$\M(E_{r'})\to\M(E_r)$ is an affine map of simplices (that is, it preserves
barycenters), so our projective limit space is a projective limit of simplices.

By the very definition of projective limit, an element of $\varprojlim\M(E_r)$
is a family $\{M_r\in\M(E_r): r\in I\}$ of probability measures satisfying the
relation $M_{r'}\La^{r'}_r=M_r$ for every couple of indices $r'>r$. Such a
family is called a {\it coherent system\/} of measures.

As explained in \cite{BO-MMJ13}, there is a canonical bijection
\begin{equation}\label{eq4.A}
\M(E_\infty)\,\longleftrightarrow\,\varprojlim\M(E_r),
\end{equation}
where $\M(E_\infty)$ denotes the space of probability measures on $E_\infty$.
This means that for every $r\in I$ there is a link $\La^\infty_r:E_\infty\to
E_r$ such that the correspondence $ M_\infty\mapsto \{M_r:r\in I\}$ given by
$M_r:=M_\infty\La^\infty_r$ establishes a one-to-one correspondence between
probability measures on the boundary and coherent families of probability
measures. We say that $M_\infty$ is the {\it boundary measure\/} for the
coherent system $\{M_r\}$.

Obviously, the links $\La^\infty_r$ are compatible with the links $\La^{r'}_r$
in the sense that
$$
\La^\infty_{r'}\La^{r'}_r=\La^\infty_r \quad \textrm{for any $r'>r$}.
$$

Observe that in the case of $I=\R_{>0}$ the boundary does not change if in the
above construction we will assume that the indices range along an arbitrary
fixed sequence of strictly increasing real numbers converging to $+\infty$.
This enables one to reduce the case $I=\R_{>0}$ to that of $I=\Z_{>0}$. For
further reference, let us call this simple trick {\it discretization of the
index set\/}.

\subsection{Running example: The binomial projective system $\mathbb
B$}\label{sect4.A}

In this illustrative example taken from Borodin--Olshanski \cite{BO-MMJ13}, the
index set $I$ is $\R_{>0}$; for every index $r\in\R_{>0}$ the corresponding
discrete set $E_r$ is a copy of $\Z_+:=\{0,1,2,\dots\}$; and for every two
indices $r'>r$ the corresponding link $\Z_+\dasharrow\Z_+$ is given by
$$
\LB^{r'}_r(l,m)=\frac{l!}{m!(l-m)!}\left(\frac r{r'}\right)^m\left(1-\frac
r{r'}\right)^{l-m}, \qquad l,\, m\in\Z_+.
$$
Note that $\La^{r'}_r(l,\,\cdot\,)$ is a binomial distribution on the set $\{m:
0\le m\le l\}$. For this reason we call this system the  {\it binomial
projective system.}

As shown in \cite{BO-MMJ13}, its boundary $E_\infty$ can be identified with the
halfline $\R_+$ (the set of nonnegative real numbers) and the links
$\La^\infty_r:\R_+\to\Z_+$ are given by Poisson distributions:
$$
\LB^\infty_r(x,m)=e^{-rx}\frac{(rx)^m}{m!}, \qquad x\in\R_+, \quad m\in\Z_+.
$$

\subsection{Feller projective systems}\label{sect4.B}

Let $\{E_r, \La^{r'}_r\}$ be a projective system as defined above. Equip the
boundary $E_\infty$ with the {\it intrinsic topology\/} --- the weakest one in
which all functions of the form
$$
x\mapsto \La^\infty_r(x,y), \qquad r\in I, \quad y\in E_r,
$$
are continuous. We say that $\{E_r, \La^{r'}_r\}$  is a {\it Feller system\/}
if the following three conditions are satisfied:

(1) All links $\La^{r'}_r$ are Feller.

(2) The boundary  $E_\infty$ is a locally compact Hausdorff space with respect
to the intrinsic topology.

(3) In this topology, all links $\La^\infty_r$ are Feller.

\medskip

Note that under condition (1), the definition of the intrinsic topology is not
affected by discretization of the index set, which entails that the intrinsic
topology is automatically metrizable with countable base.

\medskip

As an illustration, let us check that the binomial projective system from our
running example (see Section \ref{sect4.A} above) is a Feller system.

Indeed, from the very definition of the ``binomial'' links $\La^{r'}_{r}$ and
``Poissonian'' links $\La^\infty_r$ it is clear that they are Feller links. It
remains to check that the intrinsic boundary topology on $\R_+$ is the
conventional topology and so is locally compact.

By the very definition, the intrinsic topology is the weakest one in which all
the functions $x\mapsto \La^\infty_r(x,m)$, where parameter $r$ ranges over
$\R_+$ and parameter $m$ ranges over $\Z_+$, are continuous. We will prove a
stronger claim: even if only $m$ varies but $r>0$ is chosen arbitrarily and
fixed, then the corresponding topology coincides with the conventional one.

To do this, consider the map $\R_+\to [0,1]^\infty$ assigning to $x\in\R_+$ the
sequence
$$
\{a_m(x): m\in\Z_+\}, \qquad
a_m(x):=\La^\infty_r(x,m)=e^{-rx}\frac{(rx)^m}{m!}.
$$
This map is injective, for $x$ is recovered from $\{a_m(x)\}$ from the identity
$$
\sum_{m=0}^\infty s^m a_m(x)=e^{(s-1)rx}.
$$
By the very definition, the weakest topology on $\R_+$ making all the functions
$a_m(x)$ continuous is exactly the topology induced by the embedding of $\R_+$
into the cube $[0,1]^\infty$ equipped with the product topology.

Observe now that the cube $[0,1]^\infty$ is compact and the above map extends
by continuity to the one-point compactification $\R_+\cup\{+\infty\}$ of $\R_+$
by setting  $a_m(+\infty)=0$ for all $m$. Obviously, the extended map is
injective, too. Therefore, it is a homeomorphism onto a closed subset of
$[0,1]^\infty$. This implies the desired claim.

\subsection{The density lemma}\label{sect4.C}
If $\{E_r,\La^{r'}_r\}$ is a Feller projective system with boundary $E_\infty$,
then the subspace
$$
\bigcup_{r\in I}\La^\infty_r C_0(E_r)\subset C_0(E_\infty)
$$
is dense in the norm topology; see Borodin--Olshanski \cite[Lemma
2.3]{BO-JFA12}. Here $\La^\infty_r C_0(E_r)$ denotes the range of the operator
$\La^\infty_r: C_0(E_r)\to C_0(E_\infty)$.

For further reference we call this assertion the {\it density lemma\/}. Its
proof is simple; it relies on the fact that for a locally compact space $E$,
the vector space of (signed) measures on $E$ with finite total variation  is
the Banach dual to $C_0(E)$.

Since $C_c(E_r)$ is dense in $C_0(E_r)$ and the operator $\La^\infty_r:
C_0(E_r)\to C_0(E_\infty)$ is contractive, the density lemma is equivalent to
the assertion that the set of functions of the form
$$
x\mapsto \La^\infty_r(x,y), \qquad r\in I, \quad y\in E_r,
$$
is {\it total\/} in $C_0(E_\infty)$ meaning that the linear span of these
functions is dense.

\medskip

For our running example, the latter assertion means that the set of functions
$$
e^{-rx} x^n, \qquad r>0, \quad n\in\Z_+
$$
is total in $C_0(\R_+)$. But here a stronger claim holds: it is not necessary
to take all $r>0$, we may assume that $r$ is {\it fixed\/}. In other words, for
any fixed $r>0$, the space of polynomials in $x$ multiplied by the exponential
$e^{-rx}$ is dense in $C_0(\R_+)$; see \cite[Corollary 3.1.6]{BO-MMJ13} for a
simple proof. Thus, in this situation, $\La^\infty_r C_0(E_r)\subset
C_0(E_\infty)$ is dense for any fixed $r$. However, this is a special property
of the projective system under consideration; for instance, it does not hold in
the context of \cite{BO-JFA12}.

\subsection{Approximation of boundary measures}

Our definition of the boundary measure $M_\infty$ as a limit of a coherent
system of measures $M_r$ was purely formal. Here we show that, under a suitable
additional assumption, $M_\infty$ is a limit of $\{M_r\}$ in a conventional
sense.

Let, as above, $\{E_r,\La^{r'}_r\}$ be a Feller projective system with boundary
$E_\infty$, and adopt the following assumption:

\begin{condition}\label{cond4.A}
For every $r\in I$ there exists an embedding $\varphi_r:E_r\hookrightarrow
E_\infty$ such that:

(i) The image $\varphi_r(E_r)$ is a discrete subset in $E_\infty$.

(ii) For any fixed $s\in I$ and any fixed $y\in E_s$
$$
\lim_{r\to\infty}\sup_{x\in
E_r}\left|\La^r_s(x,y)-\La^\infty_s(\varphi_r(x),y)\right|=0.
$$
\end{condition}

So far our measures lived on varying spaces. Now, using the maps $\varphi_r$,
we can put all them on one and the same space, the boundary $E_\infty$. Namely,
we simply replace $M_r$ with its pushforward $\varphi_r(M_r)$, which is a
probability measure on $E_\infty$. A natural question is whether the resulting
measures converge to $M_\infty$, and the next proposition gives an affirmative
answer.

\begin{proposition}\label{prop4.A}
Assume that Condition \ref{cond4.A} is satisfied. Let $\{M_r:r\in I\}$ be a
coherent system of probability distributions and $M_\infty$ be the
corresponding boundary measure. As $r\to\infty$, the measures $\varphi_r(M_r)$
converge to $M_\infty$ in the weak topology.
\end{proposition}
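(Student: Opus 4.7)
The plan is to reduce the claim, via the density lemma from Section \ref{sect4.C}, to checking convergence of integrals against a total family of test functions in $C_0(E_\infty)$, and then to upgrade $C_0$-convergence of probability measures to weak convergence by a tightness argument. The density lemma provides a convenient total family: the functions
$$
f_{s,y}(x) := \La^\infty_s(x,y), \qquad s\in I,\ y\in E_s,
$$
span a dense subspace of $C_0(E_\infty)$. Since all the measures involved are probability measures (hence uniformly bounded in total variation), it suffices to prove
$$
\lim_{r\to\infty}\int_{E_\infty} f_{s,y}\, d\varphi_r(M_r) \;=\; \int_{E_\infty} f_{s,y}\, dM_\infty
$$
for every fixed $s\in I$ and $y\in E_s$.

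For the main computation, fix $s$ and $y$ and consider $r>s$. The pushforward formula gives
$$
\int_{E_\infty} f_{s,y}\, d\varphi_r(M_r) \;=\; \sum_{x\in E_r} M_r(x)\,\La^\infty_s(\varphi_r(x),y).
$$
Condition \ref{cond4.A}(ii) guarantees that $\sup_{x\in E_r}\left|\La^\infty_s(\varphi_r(x),y) - \La^r_s(x,y)\right|\to 0$ as $r\to\infty$, and combined with $\sum_{x} M_r(x)=1$ this yields
$$
\int_{E_\infty} f_{s,y}\, d\varphi_r(M_r) \;=\; (M_r\La^r_s)(y) + o(1) \;=\; M_s(y) + o(1),
$$
where the last equality uses the coherence relation $M_r\La^r_s = M_s$. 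On the other hand, the defining relation $M_\infty\La^\infty_s = M_s$ of the boundary measure gives $\int f_{s,y}\, dM_\infty = M_s(y)$, so the two sides agree in the limit.

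It remains to convert convergence against all $f\in C_0(E_\infty)$ into weak convergence of probability measures. Since $M_\infty$ is a tight probability measure on the locally compact Polish space $E_\infty$, one can, for every $\epsilon>0$, choose $\chi\in C_c(E_\infty)$ with $0\le\chi\le 1$ and $\int\chi\, dM_\infty > 1-\epsilon$; the already established $C_0$-convergence then yields $\int\chi\, d\varphi_r(M_r) > 1-2\epsilon$ for $r$ large, so the family $\{\varphi_r(M_r)\}$ is eventually tight, and Prohorov's theorem together with uniqueness of the $C_0$-limit upgrades this to weak convergence towards $M_\infty$. I do not anticipate any essential obstacle in this argument; it is mostly a matter of combining the density lemma, Condition \ref{cond4.A}(ii), and the coherence of $\{M_r\}$, with the tightness upgrade at the end being standard.
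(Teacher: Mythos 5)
Your proof is correct and takes essentially the same route as the paper's: reduce via the density lemma to the test functions $\La^\infty_s(\,\cdot\,,y)$, use Condition \ref{cond4.A}(ii) together with the coherence relation $M_r\La^r_s=M_s$ and the defining relation $M_\infty\La^\infty_s=M_s$ to match both sides with $M_s(y)$. The only cosmetic difference is at the end: where the paper invokes in one line the standard fact that vague convergence of probability measures (to a probability measure) upgrades to weak convergence, you spell this out with a tightness/Prohorov argument, which is valid since $E_\infty$ is locally compact, metrizable, and second countable, hence Polish.
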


Note that for this proposition, part (i) of the condition is not relevant, but
it will be used in the sequel (see Section \ref{sect5.A}).

\begin{proof}
We have to show that for any bounded continuous function $F$
$$
\langle F,\varphi_r(M_r)\rangle \to \langle F, M_\infty\rangle.
$$
Since all the measures in question are probability measures, we may replace the
weak convergence by the vague convergence, that is, we may assume that $F$ lies
in the space $C_0(E_\infty)$. Next, we apply the density lemma (see Section
\ref{sect4.C}), which enables us to further assume that $F$ has the form
$F(x)=\La^\infty_s(x,y)$ for some fixed $s\in I$ and $y\in E_s$. Then we get
$$
\langle F, M_\infty\rangle=\int_{x\in
E_\infty}M_\infty(dx)\La^\infty_s(x,y)=M_s(y).
$$

On the other hand,
\begin{equation}\label{eq4.B}
\langle F, \varphi_r(M_r)\rangle=\langle F\circ \varphi_r, M_r\rangle.
\end{equation}
Here the function $F\circ \varphi_r$ lives on $E_r$, and for $x\in E_r$ one can
write
$$
(F\circ\varphi_r)(x)=F(\varphi_r(x))=\La^\infty_r(\varphi_r(x),y)=\La^r_s(x,y)+\epsi(r,x),
$$
where, by virtue of Condition \ref{cond4.A}, the remainder term $\epsi(r,x)$
tends to 0 uniformly on $x$, as $r\to\infty$. Therefore, \eqref{eq4.B} equals
$$
\langle \La^r_s(\,\cdot\,, y), M_r\rangle +\ldots=M_s(y)+\ldots,
$$
where the dots denote a remainder term converging to 0. This completes the
proof.
\end{proof}

\begin{example}\label{ex4.A}
Consider the projective system $\mathbb B$ introduced in Section \ref{sect4.A}.
Recall that then the index set $I$ is $\R_{>0}$, $E_r=\Z_+$ for all $r>0$, and
the boundary $E_+$ is $\R_+$. Define the map $\varphi_r:E_r\to E_\infty$ as
$$
\varphi_r(l)=r^{-1}l, \qquad l\in\Z_+,
$$
and let us check that Condition \ref{cond4.A} is satisfied.

Indeed, in our situation it means that that for fixed $s>0$ and $m\in\Z_+$
\begin{equation}\label{eq4.C}
\lim_{r\to \infty}\sup_{l\in
\Z_+}|\LB^{r}_{s}(l,m)-\LB^\infty_{s}(r^{-1}l,m)|=0.
\end{equation}
The explicit expressions for the links in question are (see Section
\ref{sect4.A}):
\begin{gather*}
\LB^{r}_s(l,m)=\frac{l!}{m!(l-m)!}\left(\frac sr\right)^m\left(1-\frac
sr\right)^{l-m}, \qquad l,\, m\in\Z_+,\\
\LB^\infty_s(x,m)=e^{-sx}\frac{(sx)^m}{m!}, \qquad x\in\R_+, \quad m\in\Z_+.
\end{gather*}
In \eqref{eq4.C}, set $x=r^{-1}l$ and note that
$$
\frac{l!}{(l-m)!r^m}=x^m\left(1+O(r^{-1})\right), \quad \left(1-\frac
sr\right)^{l-m}=\left(1-\frac sr\right)^{rx}\left(1+O(r^{-1})\right).
$$
Therefore, \eqref{eq4.C} follows from the fact that (see \cite[Lemma
3.1.4]{BO-MMJ13})
$$
\textrm{$\lim_{r\to+\infty}\left(1-\frac sr\right)^{rx}x^m = e^{-rx}x^m$
uniformly on $x\in\R_+$}.
$$

For this example, Proposition \ref{prop4.A} gives a specific recipe for
approximating arbitrary probability measures on $\R_+$ by atomic measures
supported by the grids $r^{-1}\Z_+$.

\end{example}

\section{Boundary Feller semigroups: general formalism}\label{sect5}

In this section,  $\{E_r, \La^{r'}_r\}$ is a Feller projective system with
index set $I$ equal to $\R_{>0}$ or $\Z_{>0}$, and boundary $E_\infty$.

\subsection{Intertwining of semigroups}\label{sect5.B}

Let $E'$ and $E$ be two locally compact metrizable spaces, $T'(t)$ and $T(t)$
be Feller semigroups on $C_0(E')$ and $C_0(E)$, respectively, and $\La:
E'\dasharrow E$ be a Feller link. Let us say that $\La$ {\it intertwines\/} the
semigroups $T'(t)$ and $T(t)$ if
\begin{equation}\label{eq5.A}
T'(t)\Lambda=\Lambda T(t), \quad  t\ge0,
\end{equation}
where both sides are interpreted as operators $C_0(E)\to C_0(E')$.

\begin{proposition}\label{prop5.B}
Assume that for every $r\in I$ we are given a Feller semigroup $\{T_r(t):
t\ge0\}$ on $C_0(E_r)$. Assume further that the links $\La^{r'}_r$ intertwine
the corresponding semigroups, that is, for any two indices $r'>r$
\begin{equation}\label{eq5.4}
T_{r'}(t)\Lambda^{r'}_{r}=\Lambda^{r'}_{r}T_{r}(t).
\end{equation}
Then the there exists a unique Feller semigroup $\{T_\infty(t): t\ge0\}$ on
$E_\infty$ such that $\La^\infty_r$ intertwines $T_\infty(t)$ and $T_r(t)$ for
every $r\in I$,
\begin{equation}\label{eq5.5}
T_\infty(t)\Lambda_r^\infty=\Lambda^\infty_r T_r(t),\qquad t\ge 0.
\end{equation}
\end{proposition}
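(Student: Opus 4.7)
The plan is to construct $T_\infty(t)$ through its transition probabilities, using the bijection \eqref{eq4.A} between probability measures on $E_\infty$ and coherent systems of probability measures on the levels $E_r$. Uniqueness is immediate from the density lemma of Section~\ref{sect4.C}: the intertwining \eqref{eq5.5} forces $T_\infty(t)(\La^\infty_r f)=\La^\infty_r T_r(t)f$ on the norm-dense subspace $\bigcup_{r\in I}\La^\infty_r C_0(E_r)$, so any two Feller semigroups satisfying \eqref{eq5.5} must coincide.

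For existence, fix $x\in E_\infty$ and $t\ge0$ and define probability measures
$$
\mu_r(t;x):=\La^\infty_r(x,\cdot)\,T_r(t),\qquad r\in I,
$$
on the levels $E_r$. The hypothesis \eqref{eq5.4} combined with the chain relation $\La^\infty_{r'}\La^{r'}_r=\La^\infty_r$ gives
$$
\mu_{r'}(t;x)\La^{r'}_r=\La^\infty_{r'}(x,\cdot)\,T_{r'}(t)\La^{r'}_r=\La^\infty_{r'}(x,\cdot)\La^{r'}_r\,T_r(t)=\La^\infty_r(x,\cdot)\,T_r(t)=\mu_r(t;x),
$$
so $\{\mu_r(t;x):r\in I\}$ is a coherent system. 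The bijection \eqref{eq4.A} then produces a unique probability measure $\mu_\infty(t;x)$ on $E_\infty$ with $\mu_\infty(t;x)\La^\infty_r=\mu_r(t;x)$ for every $r$, and I set
$$
(T_\infty(t)F)(x):=\int_{E_\infty}F(y)\,\mu_\infty(t;x)(dy),\qquad F\in C_0(E_\infty).
$$

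Next I verify the required properties. Applied to $F=\La^\infty_s f$, Fubini together with $\mu_\infty(t;x)\La^\infty_s=\mu_s(t;x)$ yields $T_\infty(t)\La^\infty_s f=\La^\infty_s T_s(t)f$, which simultaneously gives the intertwining \eqref{eq5.5} on the dense subspace and shows that $T_\infty(t)F$ lies in $C_0(E_\infty)$ there, by the Feller property of $T_s(t)$ and of $\La^\infty_s$. Positivity, contractivity and conservativeness on bounded measurable functions are immediate from $\mu_\infty(t;x)$ being a probability measure; the inclusion $T_\infty(t)C_0(E_\infty)\subseteq C_0(E_\infty)$ then follows by sup-norm approximation using the dense subspace. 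The semigroup property is checked on $\La^\infty_r f$, where both $T_\infty(s+t)$ and $T_\infty(s)T_\infty(t)$ reduce to $\La^\infty_r T_r(s+t)f$. Strong continuity at $t=0$ holds on $\La^\infty_r f$ via $\|T_\infty(t)\La^\infty_r f-\La^\infty_r f\|\le\|T_r(t)f-f\|\to0$, and a standard $\epsi/3$ argument with the density lemma extends it to all of $C_0(E_\infty)$.

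The technical heart of the proof is the invocation of \eqref{eq4.A} to lift the coherent family $\{\mu_r(t;x)\}$ to a genuine probability measure $\mu_\infty(t;x)$ on the boundary; this is the single place where the intertwining hypothesis \eqref{eq5.4} plays its essential role. A secondary subtlety is confirming continuity of $x\mapsto(T_\infty(t)F)(x)$ for \emph{arbitrary} $F\in C_0(E_\infty)$ rather than only for $F$ in the dense subspace, which is handled by a uniform-limit argument powered by the pointwise contraction $|(T_\infty(t)F)(x)|\le\|F\|_\infty$ inherited from $\mu_\infty(t;x)$.
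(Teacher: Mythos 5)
Your proof is correct, and it follows essentially the same route as the paper, which simply defers to \cite[Proposition 2.4]{BO-JFA12}: there, too, the boundary semigroup is obtained by transporting coherent systems of measures (your $\{\La^\infty_r(x,\cdot)T_r(t)\}_{r\in I}$, made coherent precisely by \eqref{eq5.4}) through the bijection \eqref{eq4.A}, with the density lemma of Section \ref{sect4.C} supplying uniqueness and the extension of the semigroup axioms from the dense subspace $\bigcup_{r\in I}\La^\infty_r C_0(E_r)$ to all of $C_0(E_\infty)$. Your variant of defining $\mu_\infty(t;x)$ for every $x$ first, rather than prescribing $T_\infty(t)$ on the dense subspace and checking well-definedness, is a clean way to organize the same argument.
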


\begin{proof}
In the case $I=\Z_{>0}$ this assertion was established in \cite[Proposition
2.4]{BO-JFA12}. The same argument works in the case $I=\R_{>0}$.
\end{proof}

We call the semigroup $T_\infty(t)$ constructed in the above proposition the
{\it boundary semigroup\/}. Now we are going to describe its generator.

We start with the simple observation that relation \eqref{eq5.A} has an
infinitesimal analog: namely, denoting by  $A'$ and $A$ the generators of the
semigroups $T'(t)$ and $T(t)$ from \eqref{eq5.A}, one has
$$
\La:\dom(A)\to \dom(A')
$$
and
\begin{equation}\label{eq5.B}
A'\La=\La A.
\end{equation}
In words, if a Feller link intertwines two Feller semigroups, then it also
intertwines their generators. Indeed, this is an immediate consequence of the
very definition of the semigroup generator.

\begin{proposition}\label{prop5.A}
Let the semigroups $T_r(t)$ be as in the above proposition, $T_\infty(t)$ be
the corresponding boundary semigroup, and $A_r$ and $A_\infty$ denote the
generators of these semigroups. Take for each $r\in I$ an arbitrary core
$\F_r\subseteq \dom(A_r)$ for the operator $A_r$; then the linear span of the
vectors of the form $\La^\infty_r f$, where $r$ ranges over $I$ and $f$ ranges
over $\F_r$, is a core for $A_\infty$.
\end{proposition}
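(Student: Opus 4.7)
Let $\mathcal D\subset C_0(E_\infty)$ denote the linear span of all vectors of the form $\La^\infty_r f$, with $r\in I$ and $f\in\F_r$. The first task is to check that $\mathcal D\subseteq\dom(A_\infty)$. Since $\La^\infty_r$ intertwines the Feller semigroups $T_\infty(t)$ and $T_r(t)$ by \eqref{eq5.5}, the infinitesimal intertwining noted in \eqref{eq5.B} applies, yielding $\La^\infty_r(\dom(A_r))\subseteq\dom(A_\infty)$ together with $A_\infty\La^\infty_r=\La^\infty_r A_r$ on $\dom(A_r)$. In particular $\mathcal D\subseteq\dom(A_\infty)$, and for every $f\in\F_r$ one has $A_\infty\La^\infty_r f=\La^\infty_r A_r f$.

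The plan is then to verify the standard range/resolvent criterion for being a core: a subspace $\mathcal D\subseteq\dom(A_\infty)$ is a core if and only if $(\lambda-A_\infty)\mathcal D$ is dense in $C_0(E_\infty)$ for some (equivalently every) $\lambda>0$. This is immediate from the Hille--Yosida theorem, since $(\lambda-A_\infty)$ is a bijection of $\dom(A_\infty)$ onto $C_0(E_\infty)$ with bounded inverse, hence a homeomorphism between the graph-norm topology on $\dom(A_\infty)$ and the norm topology on $C_0(E_\infty)$. Fixing any $\lambda>0$, combine this criterion with the identity
$$
(\lambda-A_\infty)\La^\infty_r f=\La^\infty_r(\lambda-A_r)f, \qquad f\in\F_r,
$$
obtained from the previous step.

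Apply the same criterion to each $A_r$: because $\F_r$ is a core for $A_r$, the image $(\lambda-A_r)\F_r$ is dense in $C_0(E_r)$. The link operator $\La^\infty_r:C_0(E_r)\to C_0(E_\infty)$ is contractive, hence continuous, so $\La^\infty_r[(\lambda-A_r)\F_r]$ is dense in $\La^\infty_r C_0(E_r)$. Taking the union over $r\in I$ and invoking the density lemma of Section~\ref{sect4.C}, which asserts that $\bigcup_{r\in I}\La^\infty_r C_0(E_r)$ is dense in $C_0(E_\infty)$, one concludes that $(\lambda-A_\infty)\mathcal D$ is dense in $C_0(E_\infty)$. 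By the criterion, $\mathcal D$ is a core for $A_\infty$.

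The main conceptual point to get right is the choice of criterion. It is tempting to try the more elementary statement that a dense, semigroup-invariant subspace of $\dom(A_\infty)$ is a core, but $\mathcal D$ is generally \emph{not} invariant under $T_\infty(t)$, since $T_\infty(t)\La^\infty_r f=\La^\infty_r T_r(t)f$ need not lie in $\La^\infty_r\F_r$ (cores are not semigroup-invariant in general). The range/resolvent route bypasses this issue cleanly and pinpoints exactly the two ingredients used: the infinitesimal intertwining and the density lemma. Nothing in the argument distinguishes $I=\R_{>0}$ from $I=\Z_{>0}$, so both cases are covered at once.
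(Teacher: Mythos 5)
Your proof is correct and follows essentially the same route as the paper's: both verify that the span lies in $\dom(A_\infty)$ via the infinitesimal intertwining \eqref{eq5.B}, then apply the Hille--Yosida range criterion (core iff $(\lambda-A_\infty)\mathcal D$ is dense), push the identity $(\lambda-A_\infty)\La^\infty_r f=\La^\infty_r(\lambda-A_r)f$ through the core property of each $\F_r$, and conclude by the density of $\bigcup_{r\in I}\La^\infty_r C_0(E_r)$ from the density lemma of Section~\ref{sect4.C}. Your closing remark about why semigroup-invariance is unavailable, and your correct attribution of the final density step to the density lemma (the paper's citation of Proposition~\ref{prop5.B} at that point is evidently a slip), are accurate but do not change the substance.
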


Note that the action of $A_\infty$ on such a core is determined according to
\eqref{eq5.B}, that is
\begin{equation}\label{eq5.C}
A_\infty\La^\infty_r f=\La^\infty_r A_r f, \qquad f\in\dom(A_r).
\end{equation}

\begin{proof}
We will apply a well-known characterization of cores based on Hille--Yosida's
theorem: Let $A$ be the generator of a strongly continuous contraction
semigroup on a Banach space; a subspace $\F\subseteq\dom(A)$ is a core for $A$
if and only if, for any constant $c>0$, the subspace $(c-A)\F$ is dense. The
proof is simple (cf. \cite[Chapter 1, Proposition 3.1]{EK05}). Indeed, fix an
arbitrary $c>0$. By Hille--Yosida's theorem, the operator $(c-A)^{-1}$ is
defined on the whole space and bounded. Next, the closure of $A|_{\F}$
coincides with $A$ if and only if the closure of $(c-A|_{\F})^{-1}$ coincides
with $(c-A)^{-1}$, and this in turn just means that $(c-A)\F$, which is the
domain of $(c-A|_{\F})^{-1}$, is dense.

Take now as $\F$ the linear span of the union of the subspaces
$\La^\infty_r\F_r$. We already know that $\F$ is contained in $\dom(A_\infty)$.

By the criterion above, it suffices to prove that $(c-A_\infty)\F$ is dense in
$C_0(E_\infty)$ for any $c>0$. We have
$$
(c-A_\infty)\F=\operatorname{span}\left(\bigcup_{r\in
I}(c-A_\infty)\La^\infty_r\F_r\right) =\operatorname{span}\left(\bigcup_{r\in
I}\La^\infty_r(c-A_r)\F_r\right),
$$
where the last equality follows from \eqref{eq5.C}. On the other hand, we know
that for every $r\in I$, $(c-A_r)\F_r$ is dense in $C_0(E_r)$, because $\F_r$
is a core for $A_r$. Therefore, the closure of $(c-A_\infty)\F$ coincides with
the closure of the subspace $\bigcup_{r\in I} \La^\infty_r C_0(E_r)$. But the
latter subspace is dense by Proposition \ref{prop5.B}. Therefore,
$(c-A_\infty)\F$ is dense, too.
\end{proof}

Let us return to the basic intertwining relation \eqref{eq5.A}. Under suitable
assumptions, one can check it on the infinitesimal level, as seen from the next
proposition.

\begin{proposition}\label{prop5.C}
Assume that:
\begin{itemize}
\item  $E'$ and $E$ are two finite or countably infinite sets;

\item $\La:E'\dasharrow E$ is a stochastic Feller matrix with finitely many
nonzero entries in every row;

\item $Q'$ and $Q$ are two matrices of format $E'\times E'$ and $E\times E$,
respectively, satisfying the assumptions of Theorem\/ {\rm\ref{thm2.1};}

\item $\{T'(t)\}$ and $\{T(t)\}$ are the corresponding Feller semigroups
afforded by that theorem.

\end{itemize}

Then  $Q'\La=\La Q$ implies that $T'(t)\La=\La T(t)$ for all $t\ge0$.
\end{proposition}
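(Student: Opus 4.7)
The plan is to lift the matrix identity $Q'\La=\La Q$ to the level of the Feller semigroups by first checking intertwining of the generators on a core and then invoking a standard semigroup argument. By Theorem \ref{thm2.1}, the generators of $T(t)$ and $T'(t)$ are, respectively, the restrictions of $Q$ and $Q'$ to the domains $\dom(A)=\{f\in C_0(E):Qf\in C_0(E)\}$ and $\dom(A')=\{g\in C_0(E'):Q'g\in C_0(E')\}$, and $C_c(E)$ is a core for $A$.

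First, I would verify the infinitesimal intertwining on a core. Take $f\in C_c(E)$. Since $Q$ has finitely many nonzero entries in every column, $Qf$ is again compactly supported, hence in $C_0(E)$. The Feller property of $\La$ then gives $\La f\in C_0(E')$ and $\La Qf\in C_0(E')$. The hypothesis $Q'\La=\La Q$ (interpreted entrywise, which is legitimate because all row-sums in sight are finite) shows $Q'\La f=\La Qf\in C_0(E')$, and therefore $\La f\in \dom(A')$ with $A'\La f=\La A f$. Next, for general $g\in\dom(A)$, choose $g_n\in C_c(E)$ with $g_n\to g$ and $Ag_n\to Ag$ in $C_0(E)$; applying $\La$ (a contraction) gives $\La g_n\to \La g$ and $A'\La g_n=\La Ag_n\to \La Ag$, so closedness of $A'$ yields $\La g\in\dom(A')$ and $A'\La g=\La Ag$. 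Thus the intertwining extends to all of $\dom(A)$.

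Second, I would pass from generators to semigroups. For any fixed $f\in\dom(A)$, set
\[
v(t):=T'(t)\La f-\La T(t)f,\qquad t\ge0.
\]
Since $\dom(A)$ is invariant under $T(t)$, the curve $t\mapsto T(t)f$ lies in $\dom(A)$ and is differentiable with derivative $AT(t)f$; by the first step, $\La T(t)f\in \dom(A')$ and $A'\La T(t)f=\La A T(t)f$. Combined with $(d/dt)T'(t)\La f=A'T'(t)\La f$, this shows that $v(t)\in\dom(A')$ and $v'(t)=A'v(t)$, with $v(0)=0$. Uniqueness for the abstract Cauchy problem associated with the $C_0$-semigroup $T'(t)$ forces $v(t)\equiv0$. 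Since $\dom(A)$ is dense in $C_0(E)$ and both $T'(t)\La$ and $\La T(t)$ are bounded operators from $C_0(E)$ to $C_0(E')$, the identity $T'(t)\La f=\La T(t)f$ extends to all $f\in C_0(E)$, as required.

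The only point requiring genuine attention is the first step, namely verifying that $\La f\in\dom(A')$; everything else is bookkeeping plus the standard semigroup uniqueness argument. This depends crucially on two row/column-finiteness properties — that $Q$ maps $C_c(E)$ into itself (column-finiteness) and that $Q'\La$ makes sense entrywise (row-finiteness of $Q'$ and $\La$) — together with the Feller property of $\La$, all of which are given in the hypotheses.
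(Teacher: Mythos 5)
Your proof is correct. Note that the paper itself gives no argument for this proposition beyond the citation ``See [Section 6.2]{BO-JFA12}'', so there is no in-text proof to match line by line; what you have written is a complete, self-contained replacement that uses exactly the infrastructure the paper sets up in Theorem \ref{thm2.1}. The two delicate points are handled properly: (a) on the core $C_c(E)$, column-finiteness of $Q$ gives $Qf\in C_c(E)$, the Feller property of $\La$ gives $\La f,\La Qf\in C_0(E')$, and the entrywise identity $Q'\La=\La Q$ (legitimate since the row-finiteness of $Q'$ and of $\La$ makes both products, and their application to a compactly supported $f$, finite sums that may be freely rearranged) combined with the domain characterization in Theorem \ref{thm2.1}(iii) yields $\La f\in\dom(A')$ and $A'\La f=\La Af$; (b) the extension to all of $\dom(A)$ via the core property of $C_c(E)$ (Theorem \ref{thm2.1}(iv)), contractivity of $\La$, and closedness of $A'$ is exactly what is needed so that $\La T(t)f\in\dom(A')$ in the second step, making $v(t)=T'(t)\La f-\La T(t)f$ a classical solution of $v'=A'v$, $v(0)=0$; uniqueness for the abstract Cauchy problem of a $C_0$-semigroup generator then kills $v$, and density plus boundedness finish. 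A marginally shorter endgame would be to differentiate $s\mapsto T'(t-s)\La T(s)f$ directly and observe that the derivative vanishes, but your variant is equivalent; either way, the infinitesimal step (a) is the real content, as you correctly identify.
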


Note that the assumptions on $\La$, $Q'$, and $Q$ imply that the products
$Q'\La$ and $\La Q$ are well defined and, moreover, these two matrices have
finitely many nonzero entries in every row.

\begin{proof}
See \cite[Section 6.2]{BO-JFA12}.
\end{proof}

We will use this result to check condition \eqref{eq5.4} from Proposition
\ref{prop5.B}.

\subsection{Approximation of semigroups}\label{sect5.A}
Here we are going to show that, under suitable additional assumptions, the
boundary semigroup $T_\infty(t)$ that is afforded by the construction of
Proposition \ref{prop5.B} is approximated by  semigroups $T_r(t)$ in the sense
of Definition \ref{def3.B}.

We keep to the hypotheses of Proposition \ref{prop5.B}. Next, we assume that
Condition \ref{cond4.A} is satisfied and one more condition holds:

\begin{condition}\label{cond5.A}
For every $r\in I$, the space $C_c(E_r)$ of finitely supported functions is a
core for the generator $A_r$ of the semigroup $T_r(t)$. Moreover, this space is
invariant under the action of $A_r$.
\end{condition}

We set $\LLL_r=C_0(E_r)$, $\LLL_\infty=C_0(E_\infty)$. Given a function $f$ on
$E_\infty$, we define the function $\pi_r f$ on $E_r$ by
$$
(\pi_r f)(x):=f(\varphi_r(x)), \qquad x\in E_r.
$$
Since $\varphi_r(E_r)$ is assumed to be a locally finite subset of $E_\infty$
(see part (i) of Condition \ref{cond4.A}), $\pi_r$ maps $\LLL_\infty$ into
$\LLL_r$. Obviously, the norm of $\pi_r$ is less or equal to 1.

\begin{proposition}\label{prop5.D}
Under the above assumptions, $T_r(t)\to T_\infty(t)$ in the sense of Definition
\ref{def3.B}.
\end{proposition}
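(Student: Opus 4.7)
The plan is to verify the hypotheses of Theorem \ref{thm3.2} on a conveniently chosen core for $A_\infty$, and then simply invoke it. I take as that core the space produced by Proposition \ref{prop5.A} with $\F_r := C_c(E_r)$, which is legitimate because Condition \ref{cond5.A} guarantees $C_c(E_r)$ is a core for $A_r$. Thus
$$
\F_\infty := \Span\{\La^\infty_s f : s\in I,\; f\in C_c(E_s)\}
$$
is a core for $A_\infty$. By linearity, the operator-approximation condition of Definition \ref{def3.C} need only be checked on a single vector of the form $F = \La^\infty_s f$, where $s\in I$ and $f\in C_c(E_s)$ are fixed.

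For such an $F$, I propose the approximating sequence $F_r := \La^r_s f$ for $r>s$ (the definition for $r\le s$ is irrelevant). Since $\La^r_s$ is Feller, $F_r\in C_0(E_r) = \LLL_r$. The intertwining relation \eqref{eq5.B}, applied to the pair of semigroups $T_s(t)$, $T_r(t)$ and the Feller link $\La^r_s$, ensures that $\La^r_s$ maps $\dom(A_s)$ into $\dom(A_r)$ and satisfies $A_r\La^r_s = \La^r_s A_s$. Because $f\in C_c(E_s)\subset\dom(A_s)$, this gives $F_r\in\dom(A_r)$ and
$$
A_r F_r = \La^r_s A_s f = \La^r_s g, \qquad g := A_s f.
$$
The invariance clause of Condition \ref{cond5.A} ensures $g\in C_c(E_s)$, and on the boundary side the analogous intertwining \eqref{eq5.C} gives $A_\infty F = \La^\infty_s g$.

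It remains to establish $F_r\to F$ and $A_r F_r \to A_\infty F$ in the sense of Definition \ref{def3.A}. For the first,
$$
\Vert F_r - \pi_r F\Vert = \sup_{x\in E_r}\bigg| \sum_{y\in E_s} \bigl[\La^r_s(x,y) - \La^\infty_s(\varphi_r(x),y)\bigr] f(y) \bigg|,
$$
and since $f$ is supported on a \emph{finite} set of points $y\in E_s$, part (ii) of Condition \ref{cond4.A} forces each term, and hence the sum, to tend to zero uniformly in $x$. The exact same argument with $g$ in place of $f$ produces $\Vert A_r F_r - \pi_r A_\infty F\Vert\to 0$. Theorem \ref{thm3.2} then yields $T_r(t)\to T_\infty(t)$.

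I do not anticipate a genuine obstacle: the nontrivial input (the infinitesimal intertwining on generators) is already available from the discussion preceding Proposition \ref{prop5.A}, and it is precisely this identity that makes the candidate sequence $F_r = \La^r_s f$ work without requiring any separate analysis of $A_r F_r$. The finite-support hypothesis on $f$ (i.e.\ $f\in C_c$) is essential, as it converts the pointwise estimate of Condition \ref{cond4.A}(ii) into a uniform one; this is the one place where the invariance of $C_c(E_s)$ under $A_s$ in Condition \ref{cond5.A} is genuinely used, since otherwise $g = A_s f$ would not be finitely supported and the second convergence step would fail.
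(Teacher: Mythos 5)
Your proposal is correct and follows essentially the same route as the paper's proof: the same core $\operatorname{span}\{\La^\infty_s C_c(E_s)\}$ via Proposition \ref{prop5.A}, the same candidate $F_r=\La^r_s f$, the reduction of the second convergence to the first by intertwining and the invariance of $C_c(E_s)$ under $A_s$, and Condition \ref{cond4.A}(ii) for the uniform estimate. The only cosmetic difference is that the paper reduces without loss of generality to $f$ a delta function, while you expand $f$ as a finite sum over its support --- the same finite-support argument in different clothing.
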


\begin{proof}
Let $A_\infty$ be the generator of the boundary semigroup $T_\infty(t)$. By
virtue of Theorem \ref{thm3.2}, it suffices to prove that the restriction of
$A_\infty$ to some core $\F$ is approximated by the operators $A_r$. As $\F$ we
take the linear span of the subspaces $\La^\infty_r C_c(E_r)\subset
C_0(E_\infty)$, where $r$ ranges over $I$. The second condition postulated
above says that $C_c(E_r)$ is a core of $A_r$; consequently, $\F$ is a core for
$A_\infty$, by virtue of Proposition \ref{prop5.A}.

According to Definition \ref{def3.C} we have to show that for any vector
$f\in\F$ one can find a family of vectors $f_r\in\dom(A_r)$ such that the
following two limit relations hold: $f_r\to f$ and $A_r f_r\to A_\infty f$ as
$r\to\infty$.

Without loss of generality we may assume that $f\in\La^\infty_{s}g$ with $g\in
C_c(E_{s})$ for some $s\in I$. Next, for $r>s$ we set $f_r:=\La^r_{s}g$ and
observe that it suffices to prove the first limit relation only. Indeed, once
we know that $f_r\to f$ with such a choice of $\{f_r\}$, the second limit
relation, $A_r f_r\to A_\infty f$, follows simply by replacing $g$ with
$A_{s}g$, because the links intertwine the generators. We also use the fact
that $g\in C_c(E_{s})$ implies $A_{s}g\in C_c(E_{s})$ (see the end of the
second condition above).

We proceed to the proof of the convergence $f_r\to f$. By Definition
\ref{def3.A}, it means that
$$
\lim_{r\to\infty}\sup_{x\in E_r}|f_r(x)-f(\varphi_r(x))|=0.
$$
Without loss of generality we may assume that $g$ is the delta-function at a
point $y\in E_{s}$, but then the desired limit relation holds by virtue of
Condition \ref{cond4.A}.
\end{proof}

\section{A toy example: the one-dimensional Laguerre diffusion}\label{sect6}

In this section we apply the abstract formalism described above to a
construction of the Laguerre diffusion process on the halfline $\R_+$,
generated by the differential operator
\begin{equation*}
x\frac{d^2}{d x^2}+(c-x)\frac{d}{dx}
\end{equation*}
(here $c>0$ is a parameter). This process is well known --- it is related to
the Bessel process in the same way as the Ornstein-Uhlenbeck process is related
to the Wiener process, see, e.g. Eie \cite{Eie83}. Thus, the final result is by
no means new. However, the detailed exposition presented below will serve us as
a preparation and a guiding example for Section \ref{sect8}, where we establish
the main results.

\subsection{The binomial projective system $\mathbb B$}

Recall that $\mathbb B$ was introduced in Section \ref{sect4.A}. We will prove
two technical propositions concerning the properties of the links of $\mathbb
B$.

Observe that every link $\LB^{r'}_r$ can be applied to an arbitrary function on
$\Z_+$ (viewed as a column vector), because each row in $\LB^{r'}_r$ has
finitely many nonzero entries. As for $\LB^\infty_r$, it can be applied to
functions on $\Z_+$ with moderate (say, at most polynomial) growth at infinity.
In the next two propositions we provide explicit formulas for the action of the
links on functions of some special kind.

Introduce a notation:
$$
y^{\down m}=y(y-1)\dots(y-m+1), \qquad m\in\Z_+.
$$
Here $y$ is assumed to range over $\R_+$ or $\Z_+$, depending on the context.
By $\mathbf1_m$, where $m\in\Z_+$,  we denote the function on $\Z_+$ equal to 1
at $m$ and to 0 on $\Z_+\setminus\{m\}$. The letter $q$ always denotes a number
from the open interval $(0,1)$. Note that
\begin{equation}\label{eq6.13}
\lim_{q\to+0}\frac1{m!q^m}q^y y^{\down m}=\mathbf1_m \qquad \textrm{on $\Z_+$}.
\end{equation}

\begin{proposition}\label{prop6.A}
Assume $r'>r>0$ and\/ $0<q<1$, and let $l$ range over $\Z_+$. Regard
$\LB^{r'}_{r}$ as an operator  in the space of functions on $\Z_+$ transforming
a function $F(l)$ to a function $G(l)$. Under this transformation
\begin{gather}
l^{\down m}\mapsto\left(\frac{r}{r'}\right)^m l^{\down m} \label{eq1}\\
\mathbf1_m \mapsto \frac1{m!}\left(\frac{1-q'}{q'}\right)^m\cdot
(q')^l l^{\down m}, \qquad q':=1-\frac{r}{r'} \label{eq2}\\
 q^l l^{\down m} \mapsto\left(\frac{qr}{q'r'}\right)^m
(q')^l l^{\down m}, \qquad q':=1-(1-q)\frac{r}{r'} \label{eq3}
\end{gather}
\end{proposition}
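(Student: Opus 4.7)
My plan is to prove all three identities by direct computation, using only the explicit formula
$$
\LB^{r'}_r(l,k) = \binom{l}{k}\left(\frac{r}{r'}\right)^k\left(1-\frac{r}{r'}\right)^{l-k}
$$
together with the factorial identity
$$
\binom{l}{k}\,k^{\down m} = \binom{l-m}{k-m}\, l^{\down m}, \qquad k\ge m,
$$
which is what allows $l^{\down m}$ to be pulled outside the sum over $k$. In each case, after using this identity and shifting the summation index to $j=k-m$, the remaining sum is a standard binomial expansion.

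For \eqref{eq1}, applying $\LB^{r'}_r$ to the function $k\mapsto k^{\down m}$ and substituting the factorial identity produces the factor $l^{\down m}(r/r')^m$ together with $\sum_{j=0}^{l-m}\binom{l-m}{j}(r/r')^j(1-r/r')^{l-m-j}$, which equals $1$ by the binomial theorem, giving the claim. Identity \eqref{eq2} is even more direct: $(\LB^{r'}_r\mathbf 1_m)(l)$ is literally the matrix entry $\LB^{r'}_r(l,m)$, and one only rewrites $\binom{l}{m}=l^{\down m}/m!$ and re-expresses the formula in the parameters $q'=1-r/r'$ and $1-q'=r/r'$.

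For \eqref{eq3}, I combine the ideas of the previous two: apply $\LB^{r'}_r$ to $k\mapsto q^k k^{\down m}$, pull out $l^{\down m}$ using the factorial identity, and shift $j=k-m$. The weight on the shifted sum becomes $(qr/r')^j(1-r/r')^{l-m-j}$, whose total by the binomial theorem is $(qr/r'+1-r/r')^{l-m} = (q')^{l-m}$ with $q'=1-(1-q)r/r'$. A final rearrangement converts $(qr/r')^m (q')^{l-m}$ into $(qr/(q'r'))^m (q')^l$, matching the asserted form.

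There is no real obstacle here; the computation is elementary. The only points that require attention are keeping track of the two different definitions of the auxiliary parameter $q'$ in \eqref{eq2} and \eqref{eq3}, and noting that one may restrict the summations to $k\ge m$ since $k^{\down m}=0$ for $k<m$, so the use of the factorial identity is justified throughout.
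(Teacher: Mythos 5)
Your proof is correct and follows essentially the same route as the paper: the paper likewise proves \eqref{eq3} by a direct computation with the index shift $k'=k-m$, $l'=l-m$ and the binomial theorem (your factorial identity $\binom{l}{k}k^{\down m}=\binom{l-m}{k-m}l^{\down m}$ is just that shift packaged as a lemma), notes that \eqref{eq1} is checked the same way, and observes that \eqref{eq2} is immediate from the definition of $\LB^{r'}_r$. The only cosmetic difference is that the paper alternatively derives \eqref{eq1} and \eqref{eq2} as limit cases of \eqref{eq3} (as $q\to1$ and $q\to0$), whereas you compute all three directly, which is equally valid.
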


\begin{proof}
Let us prove \eqref{eq3}. The function $F(l)=q^l l^{\down m}$ vanishes on
$\{0,\dots,m-1\}$ and the same holds for $\LB^{r'}_{r}F$, because the matrix
$\LB^{r'}_{r}$ is lower triangular. Therefore, it suffices to compute
$(\LB^{r'}_{r}F)(l)$ for $y\ge m$. We have
$$
(\LB^{r'}_{r}F)(l)=\sum_{k=m}^l\left(1-\frac{r}{r'}\right)^{l-k}
\left(\frac{r}{r'}\right)^k\, \frac{l!}{(l-k)!\,k!}\,q^k\frac{k!}{(k-m)!}.
$$
Setting $k'=k-m$ and $l'=l-m$ we rewrite the right-hand side as
$$
\left(\frac{qr}{r'}\right)^m l^{\down m}
\sum_{k'=0}^{l'}\left(1-\frac{r}{r'}\right)^{l'-k'}
\left(\frac{qr}{r'}\right)^{k'}\, \frac{l'!}{(l'-k')!\,k'!}.
$$
The latter sum equals
$$
(q')^{l'}=(q')^{-m}(q')^l,
$$
which leads to the desired result.

Formula \eqref{eq1} can be checked in exactly the same way. Observe also that
\eqref{eq1} is a limit case of \eqref{eq3} as $q\to1$.

Formula \eqref{eq2} is immediate from the very definition of $\LB^{r'}_{r}$. On
the other hand, \eqref{eq2} can also be obtained from \eqref{eq3} as a limit
case: to see this, divide by $m!q^m$, let $q\to0$ and use \eqref{eq6.3}.
\end{proof}

\begin{proposition}\label{prop6.B}
Assume $r>0$ and\/ $0<q<1$, and let $l$ range over $\Z_+$ while $x$ ranges over
$\R_+$. Regard $\LB^{\infty}_{r}$ as an operator transforming a function $F(l)$
on $\Z_+$ to a function $G(x)$ on $\R_+$. Under this transformation
\begin{gather}
l^{\down m}\,\mapsto\, r^{m} x^m  \label{eq4}\\
\mathbf1_m\, \mapsto\, \frac{r^m}{m!}\cdot q_\infty^x x^m, \qquad
q_\infty:=e^{-r}
\label{eq5}\\
q^l l^{\down m}\,\mapsto\, q^m r^{m}\cdot q_\infty^x x^m, \qquad
q_\infty:=e^{-(1-q)r}. \label{eq6}
\end{gather}
\end{proposition}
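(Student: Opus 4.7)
My plan is to verify all three formulas by direct computation, using the explicit Poisson form of the link
$$
\LB^\infty_r(x,l)=e^{-rx}\frac{(rx)^l}{l!},\qquad x\in\R_+,\ l\in\Z_+,
$$
so that for any function $F$ of at most polynomial growth on $\Z_+$,
$$
(\LB^\infty_r F)(x)=\sum_{l=0}^\infty e^{-rx}\,\frac{(rx)^l}{l!}\,F(l),
$$
the series being absolutely convergent because of the super-exponential decay of $(rx)^l/l!$. All three input functions in \eqref{eq4}--\eqref{eq6} have this level of growth, so term-by-term manipulation is legal throughout.

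For \eqref{eq4} I would plug in $F(l)=l^{\down m}=l!/(l-m)!$, which vanishes for $l<m$, so
$$
(\LB^\infty_r F)(x)=e^{-rx}\sum_{l=m}^\infty\frac{(rx)^l}{(l-m)!}
=e^{-rx}(rx)^m\sum_{k=0}^\infty\frac{(rx)^k}{k!}=(rx)^m,
$$
after the substitution $k=l-m$, which is exactly $r^m x^m$. Formula \eqref{eq5} is the most immediate: the sum collapses to the single term $l=m$, giving $e^{-rx}(rx)^m/m!$, and rewriting $e^{-rx}$ as $q_\infty^x$ with $q_\infty=e^{-r}$ yields the claim. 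For \eqref{eq6} the same shift of index gives
$$
(\LB^\infty_r F)(x)=e^{-rx}q^m(rx)^m\sum_{k=0}^\infty\frac{(qrx)^k}{k!}
=q^m(rx)^m e^{-(1-q)rx},
$$
and again the exponential is $q_\infty^x$ with the value of $q_\infty$ stated in the proposition.

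There really is no obstacle here, since these are elementary Poisson-series computations; formally, \eqref{eq4} and \eqref{eq5} arise as the $q\to1$ and $q\to0$ limits of \eqref{eq6} (the latter using \eqref{eq6.13}), exactly parallel to how the analogous formulas in Proposition~\ref{prop6.A} degenerate. If desired, one could also obtain Proposition~\ref{prop6.B} by passing to the limit $r'\to\infty$ in Proposition~\ref{prop6.A} after the rescaling $x=l/r'$, using the Poisson limit theorem for binomial distributions already invoked in Example~\ref{ex4.A}; but a direct verification is shorter and avoids any uniformity issues, so I would favor the computation above.
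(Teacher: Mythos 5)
Your proof is correct and follows exactly the route the paper indicates: the paper's proof of Proposition \ref{prop6.B} simply says to repeat the computation of Proposition \ref{prop6.A} with the binomial distribution replaced by the Poisson one (which is precisely your direct verification, written out in full), and mentions the limit $l\to\infty$, $r'\to\infty$, $l/r'\to x$ from Proposition \ref{prop6.A} as the alternative, just as you do. Your term-by-term computations check out, and the explicit justification of absolute convergence is a small but welcome addition to the paper's terse argument.
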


\begin{proof} We may argue exactly as in the proof of Proposition \ref{prop6.A},
replacing the binomial distribution by the Poisson distribution.

Alternatively, one can use \eqref{eq4.C} and pass to the limit $l\to\infty$,
$r'\to\infty$, $l/r'\to x$ in the formulas of Proposition \ref{prop6.A}.
\end{proof}

\subsection{The Meixner and Laguerre semigroups}

Introduce a $Q$-matrix of format $\Z_+\times\Z_+$, depending on parameters
$c>0$ and $r>0$, with the entries
\begin{gather*}
\Qc_r(k,k+1)=r(c+k), \qquad \Qc_r(k,k-1)=(r+1)k,\\
\Qc_r(k,k)=-[r(c+k)+(r+1)k]=-[(2r+1)k+rc],\\
\Qc_r(k,k')=0, \qquad |k-k'|\ge2,
\end{gather*}
where $k$ ranges over $\Z_+$. Let us regard $\Qc_r$ as a difference operator
acting on functions on $\Z_+$, which are interpreted as column vectors:
\begin{equation}\label{eq6.4}
(\Qc_r F)(l)=r(c+l)F(l+1)+(r+1)l F(l-1)-[(2r+1)l+rc]F(l),
\end{equation}
where $l\in\Z_+$. As is seen from the next proposition, this difference
operator is related to the classical {\it Meixner orthogonal polynomials\/}.
Recall the definition of the these polynomials (see, e.g.,
Koekoek--Lesky--Swarttouw \cite{KLS10} and references therein):

The Meixner polynomials are orthogonal  with respect to the negative binomial
distribution on $\Z_+$,
$$
\sum_{l\in\Z_+}(1+r)^{-c}\frac{(c)_l}{l!}\,\left(\frac r{1+r}\right)^l \de_l,
$$
where $(c)_l:=c(c+1)\dots(c+l-1)$ is the Pochhammer symbol and $\de_l$ denotes
the delta measure at $l$. The explicit expression for the {\it monic\/} Meixner
polynomial of degree $n=0,1,2,\dots$ is
\begin{equation}\label{eq6.3}
\Meix_n(l; c,r)=(c)_n\sum_{m=0}^n(-r)^{n-m}\frac{n^{\down m}} {(c)_m
m!}l^{\down m}.
\end{equation}

\begin{proposition}\label{prop6.D}
The Meixner difference operator \eqref{eq6.4} preserves the space of
polynomials. We have
\begin{equation}\label{eq6.E}
\Qc_r:  l^{\down m}\to -m l^{\down m}+rm(m+c-1) l^{\down (m-1)}
\end{equation}
and
\begin{equation}\label{eq6.5}
\Qc_r: \Meix_n(l; c,r)\to -n \Meix_n(l;c,r).
\end{equation}
\end{proposition}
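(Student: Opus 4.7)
The plan is to verify both formulas by direct algebraic calculation in the basis of falling factorials, with (6.5) deduced from (6.E).

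For (6.E), I would substitute $F(l)=l^{\down m}$ into the defining formula (6.4) for $\Qc_r$ and expand each shifted monomial back in the basis $\{l^{\down k}\}$. The three identities I need are elementary finite-difference statements:
\begin{gather*}
(l+1)^{\down m}=l^{\down m}+m\, l^{\down(m-1)},\\
l\cdot (l-1)^{\down m}=l^{\down(m+1)},\\
l\cdot l^{\down m}=l^{\down(m+1)}+m\, l^{\down m}.
\end{gather*}
Using these, the three terms in $r(c+l)(l+1)^{\down m}+(r+1)l(l-1)^{\down m}-[(2r+1)l+rc]l^{\down m}$ each decompose into a combination of $l^{\down(m+1)}$, $l^{\down m}$, and $l^{\down(m-1)}$. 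The coefficient of $l^{\down(m+1)}$ is $r+(r+1)-(2r+1)=0$; the coefficient of $l^{\down m}$ simplifies to $r(c+2m)-(2r+1)m-rc=-m$; and the coefficient of $l^{\down(m-1)}$ is $rcm+rm(m-1)=rm(c+m-1)$. This yields (6.E) and shows in particular that $\Qc_r$ preserves the space of polynomials (and is lower triangular in the basis $\{l^{\down m}\}$ with diagonal entries $-m$).

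For (6.5), I would apply (6.E) termwise to the expansion (6.3) of $\Meix_n(l;c,r)$. The coefficient of $l^{\down m}$ in $\Qc_r\Meix_n$ receives two contributions: the $-m$ piece of (6.E) acting on the degree-$m$ term, and the $rm(m+c-1)l^{\down(m-1)}$ piece of (6.E) coming from the degree-$(m+1)$ term. Using $(c)_{m+1}=(c)_m(c+m)$ to cancel the $(m+c)$ factor and $(m+1)!/(m+1)=m!$ to match denominators, the second contribution rewrites as $-(c)_n(-r)^{n-m}\,n^{\down(m+1)}/((c)_m m!)$. Adding the two contributions, the bracket $[m\,n^{\down m}+n^{\down(m+1)}]$ collapses via $n^{\down(m+1)}=(n-m)\,n^{\down m}$ to $n\cdot n^{\down m}$, which is exactly $-n$ times the $m$th coefficient of $\Meix_n$ in (6.3). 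This is (6.5).

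There is no serious obstacle here; the entire proposition is a bookkeeping exercise in the falling-factorial calculus. The only point requiring care is the cancellation of the top-order $l^{\down(m+1)}$ terms in the first step, which is the algebraic reflection of the fact that $\Qc_r$ was designed to be a bona fide generator (preserving the polynomial filtration by degree); once this cancellation is observed, the rest of the proof is mechanical, and the eigenvalue identification in (6.5) is essentially forced by the fact that $\Qc_r$ is already known from (6.E) to act as a lower-triangular operator with spectrum $\{0,-1,\dots,-n\}$ on polynomials of degree $\le n$.
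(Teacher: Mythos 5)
Your proposal is correct and follows exactly the route the paper intends: the paper's proof of Proposition \ref{prop6.D} simply states that ``all claims can be verified directly'' (citing \cite{KLS10} for \eqref{eq6.5}), and your computation is precisely that direct verification carried out in full --- the falling-factorial identities, the cancellation of the $l^{\down(m+1)}$ terms, and the collapse $m\,n^{\down m}+n^{\down(m+1)}=n\,n^{\down m}$ all check out. Nothing is missing.
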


Thus, the Meixner difference operator is diagonalized in the basis of the
Meixner polynomials.

\begin{proof}
All claims can be verified directly. For \eqref{eq6.5}, see also \cite{KLS10}.
\end{proof}

\begin{proposition}\label{prop6.E}
For arbitrary  $r'>r>0$, we have
$$
\Qc_{r'}\,\LB^{r'}_{r}=\LB^{r'}_{r}\Qc_{r}.
$$
\end{proposition}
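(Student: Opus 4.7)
The plan is to verify the intertwining identity by evaluating both sides on a convenient linear basis of functions on $\Z_+$, namely the falling factorials $\{l^{\down m}\}_{m\ge 0}$, for which Propositions \ref{prop6.A} and \ref{prop6.D} give explicit formulas.

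First, I would observe that the two matrix products in question are well defined and have finite support in each row. The matrix $\Qc_{r'}$ is tridiagonal and $\LB^{r'}_r$ is lower triangular with each row supported on $\{0,\dots,l\}$; an easy check shows that both $\Qc_{r'}\LB^{r'}_r$ and $\LB^{r'}_r\Qc_r$ have rows supported in $\{0,\dots,l+1\}$. Consequently each side applied to any function on $\Z_+$ (in particular, to any polynomial) reduces to a finite sum. Moreover, two row-finite matrices that agree on the full space of polynomial functions must be equal, because Lagrange interpolation produces polynomials taking any prescribed values on any finite set of points, so agreement on polynomials determines the entries of each row.

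Next, I would apply both sides to $l^{\down m}$. For the left side, formula \eqref{eq1} of Proposition \ref{prop6.A} gives $\LB^{r'}_r l^{\down m}=(r/r')^m l^{\down m}$, and then \eqref{eq6.E} of Proposition \ref{prop6.D} with parameter $r'$ yields
\begin{equation*}
\Qc_{r'}\LB^{r'}_r l^{\down m}=-m\Bigl(\tfrac{r}{r'}\Bigr)^{m} l^{\down m}+r'\, m(m+c-1)\Bigl(\tfrac{r}{r'}\Bigr)^{m} l^{\down(m-1)}.
\end{equation*}
For the right side, \eqref{eq6.E} with parameter $r$ followed by \eqref{eq1} yields
\begin{equation*}
\LB^{r'}_r\Qc_r l^{\down m}=-m\Bigl(\tfrac{r}{r'}\Bigr)^{m} l^{\down m}+r\, m(m+c-1)\Bigl(\tfrac{r}{r'}\Bigr)^{m-1} l^{\down(m-1)}.
\end{equation*}
The coefficients of $l^{\down m}$ on the two sides match trivially, and the coefficients of $l^{\down(m-1)}$ both reduce to $r^{m}m(m+c-1)/(r')^{m-1}$. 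Since $m$ is arbitrary, the identity holds on the basis $\{l^{\down m}\}$ and hence on all polynomials, completing the argument.

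The computation has no real obstacle; the only point requiring a moment of thought is justifying that equality on polynomials suffices, and this follows at once from the row-finiteness of both products together with Lagrange interpolation. An alternative route would be to use the Meixner-polynomial eigenbasis \eqref{eq6.5}, but this would require first computing how $\LB^{r'}_r$ acts on Meixner polynomials (not stated above), so working with $l^{\down m}$ is the cleanest path.
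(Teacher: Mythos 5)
Your proof is correct, and it takes a slightly different route from the paper's. You verify the identity on the falling-factorial basis $\{l^{\down m}\}$, using \eqref{eq1} together with the bidiagonal action \eqref{eq6.E}, and then match coefficients of $l^{\down m}$ and $l^{\down(m-1)}$ on both sides; your coefficient computation checks out. The paper instead works in the Meixner eigenbasis: it first derives from \eqref{eq1} and the expansion \eqref{eq6.3} the covariance relation $\LB^{r'}_{r}\,\Meix_m(\,\cdot\,;c,r)=(r/r')^m\,\Meix_m(\,\cdot\,;c,r')$, and then \eqref{eq6.5} shows both $\Qc_{r'}\LB^{r'}_r$ and $\LB^{r'}_r\Qc_r$ act on $\Meix_m(\,\cdot\,;c,r)$ as multiplication by $-m(r/r')^m$, so no coefficient matching is needed at all. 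Both arguments conclude the same way, by row-finiteness of the two products plus the fact that polynomials separate points of $\Z_+$; your Lagrange-interpolation justification is in fact a more explicit version of the paper's one-line appeal to separation, and is welcome. As for what each approach buys: yours is the more elementary and uses only formulas displayed in the text, whereas the paper explicitly chooses the eigenbasis route because it is ``more conceptual and well suited for the generalization'' --- the identical argument is repeated in Proposition \ref{prop8.E} on the Young graph, with the Meixner symmetric functions playing the role of the Meixner polynomials. Note, though, that your route generalizes just as smoothly there, since \eqref{eq8.A} and \eqref{eq8.E} give the exact analogs of \eqref{eq1} and \eqref{eq6.E} for the Frobenius--Schur functions $\FS_\mu$, with the same two-term coefficient match $r'(r/r')^{|\mu|}=r(r/r')^{|\mu|-1}$; and the extra cost of the paper's route is small, since the Meixner covariance relation is itself a two-line consequence of \eqref{eq1} and \eqref{eq6.3}.
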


\begin{proof}
Because the $Q$-matrices in question have a simple tridiagonal form and the
entries of $\LB^{r'}_{r}$ are given by a simple expression, a direct check is
possible. However, we prefer to give another proof, which has the advantage of
being more conceptual and well suited for the generalization that we need.

Observe that
\begin{equation*}
\Qc_{r'}\,\LB^{r'}_{r} F=\LB^{r'}_{r} \Qc_{r} F
\end{equation*}
for any polynomial $F$. Indeed, it suffices to check this for
$F=\Meix_m(\,\cdot\,;c,r)$. It follows from \eqref{eq1} and \eqref{eq6.3} that
\begin{equation*}
\LB^{r'}_{r} \Meix_m(\,\cdot\,;c,r)= \left(\frac {r}{r'}\right)^m
\Meix_m(\,\cdot\,; c,r'),
\end{equation*}
and then we use \eqref{eq6.5} to conclude that both $\Qc_{r'}\,\LB^{r'}_{r}$
and $\LB^{r'}_{r}\Qc_{r}$ multiply $\Meix_m(\,\cdot\,;c,r)$ by $-m(r/r')^m$.

Further, both matrices $\Qc_{r'}\,\LB^{r'}_{r}$ and $\LB^{r'}_{r} \Qc_{r}$ have
finitely many nonzero entries in every row. Since the polynomials separate
points on $\Z_+$, these two matrices coincide.
\end{proof}

\begin{proposition}\label{prop6.F}
For any $c,r>0$, the matrix  $\Qc_r$ satisfies the assumptions of Theorem
\ref{thm2.1} with functions $\ga(k)=\eta(k)=k+1$.
\end{proposition}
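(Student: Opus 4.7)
The argument is a routine verification of the four hypotheses of Theorem \ref{thm2.1}. Since $\Qc_r$ is tridiagonal (supported on the diagonal and its two nearest neighbors), it has at most three nonzero entries in every row and every column, so the row/column finiteness hypothesis is immediate. The functions $\ga(k)=\eta(k)=k+1$ are strictly positive on $\Z_+$ and tend to $+\infty$ at infinity. Condition \eqref{eq2.3} is also immediate: $-\Qc_r(k,k)=(2r+1)k+rc\le C(k+1)$ with $C=\max(2r+1,rc)$.

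The plan is to verify \eqref{eq2.1} and \eqref{eq2.2} by direct computation, exploiting the cancellations that arise from the tridiagonal structure.

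\textbf{Verification of \eqref{eq2.2}.} I would compute $(\Qc_r\eta)(k)$ explicitly for $k\ge 1$ using the definition in \eqref{eq6.4}:
\[
(\Qc_r\eta)(k)=r(c+k)(k+2)+(r+1)k\cdot k-\bigl[(2r+1)k+rc\bigr](k+1).
\]
Expanding each term, the coefficients of $k^2$ sum to $r+(r+1)-(2r+1)=0$ and the coefficients of $k$ sum to $(rc+2r)-(2r+1)-rc=-1$, leaving $(\Qc_r\eta)(k)=-k+rc$. For $k=0$ one checks $(\Qc_r\eta)(0)=rc$. In all cases $(\Qc_r\eta)(k)\le rc\le rc\cdot\eta(k)$, so \eqref{eq2.2} holds with $C=rc$.

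\textbf{Verification of \eqref{eq2.1}.} A similar direct computation gives, for $k\ge 1$,
\[
\bigl(\Qc_r\tfrac{1}{\ga}\bigr)(k)=\frac{r(c+k)}{k+2}+(r+1)-\frac{(2r+1)k+rc}{k+1}.
\]
Writing $\frac{r(c+k)}{k+2}=r+\frac{r(c-2)}{k+2}$ and $\frac{(2r+1)k+rc}{k+1}=(2r+1)+\frac{rc-2r-1}{k+1}$, the constant parts $r+(r+1)-(2r+1)$ cancel, leaving
\[
\bigl(\Qc_r\tfrac{1}{\ga}\bigr)(k)=\frac{1}{k+1}\left[\frac{r(c-2)(k+1)}{k+2}+(2r+1-rc)\right].
\]
The bracket is bounded uniformly in $k$ (by, say, $|r(c-2)|+|2r+1-rc|=:C$), so $(\Qc_r\frac{1}{\ga})(k)\le \frac{C}{\ga(k)}$. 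The $k=0$ case $(\Qc_r\frac{1}{\ga})(0)=-\frac{rc}{2}$ is negative and poses no difficulty. Taking $C$ to be the maximum of the constants obtained in the three checks completes the verification.

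The computation is entirely routine; the only substantive point is the algebraic cancellation of the leading terms in $\Qc_r\eta$ and $\Qc_r\frac{1}{\ga}$, which reflects the fact that the $Q$-matrix is the pre-generator of a Feller process and therefore annihilates constants. I do not anticipate any genuine obstacle.
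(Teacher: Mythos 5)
Your verification is correct in all details (the expansions, the cancellation of the $k^2$ and constant-in-$k$ leading parts, and the $k=0$ edge cases all check out), and it is exactly the argument the paper intends: the paper's entire proof of Proposition \ref{prop6.F} reads ``Easy direct check.'' You have simply written out in full the routine computation the authors left to the reader.
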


\begin{proof}
Easy direct check.
\end{proof}

This proposition makes it possible to apply Theorem \ref{thm2.1}, which in turn
entails the following assertions.

\begin{corollary}\label{cor6.A}
{\rm(i)} The $Q$-matrix $\Qc_r$ gives rise to a Feller semigroup $T^{(c)}_r(t)$
on $C_0(\Z_+)$ whose generator $A^{(c)}_r$ is implemented by $\Qc$.

{\rm(ii)} The subspace $C_0(\Z_+)$ is a core for generator $A^{(c)}_r$.
\end{corollary}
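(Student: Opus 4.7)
The plan is simply to verify that Theorem \ref{thm2.1} applies to the $Q$-matrix $\Qc_r$ and then read off the two assertions. All the analytic work has effectively been done already by Theorem \ref{thm2.1} and by Proposition \ref{prop6.F}; what remains is merely to check the remaining structural hypotheses on $\Qc_r$ and to translate the conclusions of Theorem \ref{thm2.1} into the language of this section.

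First I would check that $\Qc_r$ satisfies \eqref{eq2.8}: from the explicit formulas, the off-diagonal entries $\Qc_r(k,k+1)=r(c+k)$ and $\Qc_r(k,k-1)=(r+1)k$ are nonnegative (the second vanishes at $k=0$, which is consistent), while by construction the diagonal entry is minus the sum of the off-diagonal entries in its row, a finite sum because only two of them are nonzero. The row and column finiteness assumption of Theorem \ref{thm2.1} is likewise immediate: $\Qc_r$ is tridiagonal, so each row and each column contains at most three nonzero entries.

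The remaining hypotheses of Theorem \ref{thm2.1} are the existence of strictly positive functions $\ga,\eta$ tending to $+\infty$ and satisfying the pointwise inequalities \eqref{eq2.3}, \eqref{eq2.1}, \eqref{eq2.2}. These are provided precisely by Proposition \ref{prop6.F}, with $\ga(k)=\eta(k)=k+1$.

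With all hypotheses of Theorem \ref{thm2.1} verified, the corollary follows at once. Part (i) of that theorem ensures that $\Qc_r$ is regular; part (ii) supplies the Feller semigroup $T^{(c)}_r(t)$ on $C_0(\Z_+)$; and part (iii) identifies its generator $A^{(c)}_r$ as the operator acting as $\Qc_r$ on the set of $f\in C_0(\Z_+)$ for which $\Qc_r f\in C_0(\Z_+)$, proving assertion (i) of the corollary. Part (iv) of Theorem \ref{thm2.1} then identifies the subspace of compactly (i.e.\ finitely) supported functions on $\Z_+$ as a core for $A^{(c)}_r$, giving assertion (ii). There is no real obstacle here, since the nontrivial estimates required are already carried out in Proposition \ref{prop6.F}.
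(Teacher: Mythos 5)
Your proof is correct and is exactly the paper's argument: the corollary is stated as an immediate consequence of applying Theorem \ref{thm2.1} to $\Qc_r$, with the key hypotheses \eqref{eq2.3}--\eqref{eq2.2} supplied by Proposition \ref{prop6.F} and the structural conditions (\eqref{eq2.8}, row/column finiteness via tridiagonality) checked directly, just as you do. You also correctly read assertion (ii) as concerning $C_c(\Z_+)$, the finitely supported functions, which is what Theorem \ref{thm2.1}(iv) actually provides; the statement's ``$C_0(\Z_+)$'' is evidently a typo for $C_c(\Z_+)$.
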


We call $\Tc_r(t)$ the {\it Meixner semigroup\/}. It determines a continuous
time Markov chain on $\Z_+$ which we call the  {\it Meixner chain\/} and denote
by $\Xc(t)$.

\begin{proposition}\label{prop6.K}
For every $c>0$ there exists a unique Feller Markov process $X^{(c)}(t)$ on
$\R_+$ such that the corresponding Feller semigroup, denoted by $\Tc(t)$, is
consistent with the Meixner semigroups $\Tc_r(t)$, $r>0$, in the sense that
\begin{equation*}
\Tc(t)\,\LB^{\infty}_{r}=\LB^{\infty}_{r}\,\Tc_{r}(t),\qquad t\ge 0, \quad r>0.
\end{equation*}
\end{proposition}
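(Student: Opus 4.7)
The plan is to obtain $T^{(c)}(t)$ as the boundary Feller semigroup associated to the family $\{T^{(c)}_r(t):r>0\}$ by applying the general machinery of Section \ref{sect5} to the binomial projective system $\mathbb B$, whose boundary is $\R_+$.

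First, I would verify that the hypotheses of Proposition \ref{prop5.B} hold. That $\mathbb B$ is a Feller projective system was already checked in Section \ref{sect4.B}, and by Corollary \ref{cor6.A} each $T^{(c)}_r(t)$ is a genuine Feller semigroup on $C_0(\Z_+)$ whose generator has core $C_c(\Z_+)$ and agrees with $\Qc_r$ there. The remaining hypothesis is the intertwining $T^{(c)}_{r'}(t)\,\LB^{r'}_r = \LB^{r'}_r\,T^{(c)}_r(t)$ for every pair $r'>r>0$. I would establish this at the semigroup level by invoking Proposition \ref{prop5.C}: the link $\LB^{r'}_r$ is Feller and row-finite (it is lower triangular with finitely many nonzero entries in each row); the $Q$-matrices $\Qc_r$ and $\Qc_{r'}$ satisfy the hypotheses of Theorem \ref{thm2.1} by Proposition \ref{prop6.F}; and the infinitesimal intertwining $\Qc_{r'}\,\LB^{r'}_r = \LB^{r'}_r\,\Qc_r$ is exactly Proposition \ref{prop6.E}. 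Thus Proposition \ref{prop5.C} upgrades the infinitesimal identity to the semigroup identity.

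With all hypotheses in place, Proposition \ref{prop5.B} produces a unique Feller semigroup $T^{(c)}(t)$ on $C_0(\R_+)=C_0(E_\infty)$ satisfying $T^{(c)}(t)\,\LB^\infty_r=\LB^\infty_r\,T^{(c)}_r(t)$ for every $r>0$ and $t\ge0$. This is precisely the consistency relation in the statement of the proposition. Since $T^{(c)}(t)$ is Feller, it gives rise, by the standard construction (see Ethier--Kurtz \cite[Chapter 4, Section 2]{EK05}), to a Feller Markov process $X^{(c)}(t)$ on $\R_+$ with c\`adl\`ag trajectories. Uniqueness of the process follows from uniqueness of the semigroup asserted in Proposition \ref{prop5.B}, together with the bijection between Feller semigroups and Feller processes.

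There is no real obstacle here beyond checking the hypotheses of the general theorems: the work has already been done in Propositions \ref{prop6.D}--\ref{prop6.F} and the preparatory computations in Propositions \ref{prop6.A}--\ref{prop6.B}. If one wished to complement the statement with an approximation assertion in the spirit of Proposition \ref{prop5.D}, one would additionally verify Conditions \ref{cond4.A} and \ref{cond5.A}; Example \ref{ex4.A} already shows Condition \ref{cond4.A} holds with $\varphi_r(l)=r^{-1}l$, while Condition \ref{cond5.A} is immediate from part (iv) of Theorem \ref{thm2.1} together with the tridiagonal form \eqref{eq6.4} of $\Qc_r$, which preserves $C_c(\Z_+)$.
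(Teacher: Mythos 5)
Your proposal is correct and follows exactly the paper's own route: Proposition \ref{prop6.E} gives the infinitesimal intertwining $\Qc_{r'}\,\LB^{r'}_{r}=\LB^{r'}_{r}\,\Qc_{r}$, Proposition \ref{prop5.C} (whose hypotheses hold by Proposition \ref{prop6.F} and the row-finiteness and Feller property of the binomial links) upgrades it to the semigroup level, and Proposition \ref{prop5.B} then yields the boundary Feller semigroup and hence the process. Your closing remark on Conditions \ref{cond4.A} and \ref{cond5.A} is not needed for this proposition (the paper defers it to Proposition \ref{prop6.J}), but it is accurate.
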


\begin{proof}
We know that the $Q$-matrices with various values of parameter $r$ are
consistent with the links (Proposition \ref{prop6.E}). It follows, by virtue of
Proposition \ref{prop5.C}, that the semigroups are also consistent with the
links. Therefore, we may apply Proposition \ref{prop5.B}, which gives the
desired result.
\end{proof}

We call $X^{(c)}(t)$ and $\Tc(t)$ the {\it Laguerre process\/} and the {\it
Laguerre semigroup\/}, respectively; this terminology is justified by the
results of Section \ref{sect6.A}.

\subsection{A family of cores for Markov semigroup generators}

For any fixed $q\in(0,1)$, the functions $q^x x^m$, $m=0,1,2,\dots$, span a
dense subspace in $C_0(\R_+)$, see Borodin--Olshanski \cite[Corollary
3.1.6]{BO-MMJ13}. This also implies that the functions $q^l l^m$, where
$m=0,1,2,\dots$ and $l$ ranges over $\Z_+$, span a dense subspace in
$C_0(\Z_+)$. These facts are used in the next proposition.

\begin{proposition}\label{prop6.G}
{\rm(i)} For any $r'>r>0$, the operator $\LB^{r'}_{r}: C_0(\Z_+)\to C_0(\Z_+)$
has a dense range.

{\rm(ii)} Likewise, for any $r>0$, the operator $\LB^\infty_r: C_0(\Z_+)\to
C_0(\R_+)$ has a dense range.
\end{proposition}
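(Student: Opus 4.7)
The plan is to use the explicit action formulas \eqref{eq3} and \eqref{eq6} from Propositions \ref{prop6.A} and \ref{prop6.B} to exhibit, inside the range of each link, a family of functions that is already known to be dense.

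For part (i), fix any $q\in(0,1)$ and consider the family
$$
F_m(l):=q^l\, l^{\down m}, \qquad m=0,1,2,\dots\,.
$$
Each $F_m$ lies in $C_0(\Z_+)$ because $q\in(0,1)$ forces $F_m(l)\to0$ as $l\to\infty$. Applying formula \eqref{eq3} with $q':=1-(1-q)\tfrac r{r'}$, which lies in $(0,1)$ since $(1-q)r/r'\in(0,1)$, we obtain
$$
\LB^{r'}_r F_m \;=\; \Big(\tfrac{qr}{q'r'}\Big)^{\!m}(q')^l\, l^{\down m},
$$
and the scalar coefficient is nonzero. Thus the range of $\LB^{r'}_r$ contains every function of the form $(q')^l l^{\down m}$. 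Since $l^{\down m}$ and $l^m$ differ by a polynomial of lower degree, the linear span of $\{(q')^l l^{\down m}\}_{m\ge 0}$ coincides with the span of $\{(q')^l l^m\}_{m\ge 0}$, which by the density statement recalled just before the proposition is dense in $C_0(\Z_+)$. Hence the range of $\LB^{r'}_r$ is dense.

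For part (ii), apply the same family $F_m$ and use formula \eqref{eq6}: with $q_\infty:=e^{-(1-q)r}\in(0,1)$ we get
$$
\LB^\infty_r F_m \;=\; q^m r^m\cdot q_\infty^x\, x^m,
$$
again with a nonzero scalar. So the range of $\LB^\infty_r$ contains $\{q_\infty^x x^m\}_{m\ge 0}$, whose linear span is dense in $C_0(\R_+)$ by the same cited density result. This gives density of the range.

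There is essentially no obstacle here: the proof is an immediate consequence of the explicit eigenfunction-type formulas \eqref{eq3} and \eqref{eq6} combined with the density of $\{q^l l^m\}$ in $C_0(\Z_+)$ and $\{q^x x^m\}$ in $C_0(\R_+)$. The only verification needed is that the new exponential parameters $q'$ and $q_\infty$ remain strictly inside $(0,1)$, which is immediate from $r'>r>0$ and $q\in(0,1)$.
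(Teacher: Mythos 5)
Your proof is correct and takes essentially the same route as the paper's: both apply the mapping formulas \eqref{eq3} and \eqref{eq6} to the family $q^l\, l^{\down m}$ and then invoke the density of the span of $\{q^l l^m\}$ in $C_0(\Z_+)$ (respectively $\{q^x x^m\}$ in $C_0(\R_+)$) recalled just before the proposition. The extra verifications you include --- that $q'$ and $q_\infty$ remain in $(0,1)$ and that the factorial powers $l^{\down m}$ span the same space as the ordinary powers $l^m$ --- are details the paper leaves implicit.
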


\begin{proof}
(i) Take an arbitrary $q\in(0,1)$. By \eqref{eq3}, $\LB^{r'}_{r}$ maps the
linear span of functions $q^l l^m$, $m=0,1,2,\dots$ onto the linear span of
functions $(q')^l l^m$, with some other $q'\in(0,1)$, see \eqref{eq3}. Since
these spans are dense, we get the desired claim.

(ii) The same argument, with reference to \eqref{eq6}.
\end{proof}

Recall that $\Ac_r$ denotes the generator of semigroup $\Tc_r(t)$ (Corollary
\ref{cor6.A}). Likewise, let $\Ac$ denote the generator of semigroup $\Tc(t)$.

\begin{proposition}\label{prop6.H}
Fix an arbitrary number $q\in(0,1)$.

{\rm(i)} For every $r>0$, the linear span of functions $q^l l^m$,
$m=0,1,2,\dots$, where argument $l$ ranges over\/ $\Z_+$, is a core for
$\Ac_r$.

{\rm(ii)} Likewise, the linear span of functions $q^x x^m$, $m=0,1,2,\dots$,
where argument $x$ ranges over\/ $\R_+$, is a core for $\Ac$.
\end{proposition}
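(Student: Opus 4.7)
The plan for both parts is to invoke the standard criterion that a subspace $\F \subseteq \dom(A)$ is a core iff $(\lambda - A)\F$ is dense in $C_0$ for some $\lambda > 0$. Since $\F$ is already known to be dense in the ambient $C_0$ space by \cite[Cor.\ 3.1.6]{BO-MMJ13} (cited just before the proposition), the task reduces to showing that any finite signed measure $\mu$ that annihilates $(\lambda - A)\F$ must itself vanish. A preliminary direct computation will confirm $\F \subset \dom(A)$ and $A\F \subseteq \F$: both $\Qc_r$ and the Laguerre differential operator $x\,d^2/dx^2 + (c-x)\,d/dx$ send the basis element $q^l l^m$ (resp.\ $q^x x^m$) to a linear combination of $q^l l^{m\pm 1}$ and $q^l l^m$ (resp.\ the analogous combination on $\R_+$), all of which lie in $\F$.

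The key step is to package the infinite family of relations $\langle \mu, (\lambda - A)(q^l l^m)\rangle = 0$ for $m = 0,1,2,\dots$ into a single ODE. I would use the generating function $G_t(l) := (qe^t)^l = \sum_{m\ge 0}(t^m/m!)\,q^l l^m$ (resp.\ $G_t(x) := (qe^t)^x$), which lies in $C_0$ for every $t \in [0, \ln(1/q))$. A Fubini interchange---justified by the polynomial-times-$q^l$ structure of $A(q^l l^m)$, which makes $\sum_m(|t|^m/m!)\|A(q^l l^m)\|_\infty$ geometrically summable for $t$ small enough---then yields $\langle \mu, (\lambda - A)G_t\rangle = 0$. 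Since $A$ acts on exponentials by the elementary formulas $\Qc_r\,u^l = (u-1)\,u^l\bigl[rc + l(ru - (r+1))/u\bigr]$ and $\Ac\,e^{\alpha x} = e^{\alpha x}\bigl[x\alpha(\alpha-1) + c\alpha\bigr]$, pairing with $\mu$ produces a first-order linear ODE either for the generating function $U(u) := \sum_l \mu_l u^l$ on $u \in [q,1)$ in the discrete case, or for the Laplace transform $V(\alpha) := \int e^{\alpha x}\,d\mu(x)$ on $\alpha \in [\ln q, 0)$ in the continuous case.

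Both ODEs are separable and solve explicitly as $U(u) = K(u-1)^{-\lambda}(u - (r+1)/r)^{\lambda - c}$ and $V(\alpha) = K\,\alpha^{-\lambda}(\alpha - 1)^{\lambda - c}$, each with a single integration constant $K$. The crucial feature is that, because $\lambda > 0$, the factor $(u-1)^{-\lambda}$ (resp.\ $\alpha^{-\lambda}$) diverges at the endpoint $u = 1$ (resp.\ $\alpha = 0$), whereas $U$ extends continuously to $u = 1$ with $U(1) = \sum_l \mu_l$ finite, and $V$ extends continuously to $\alpha = 0$ with $V(0) = \mu(\R_+)$ finite, since $\mu$ has bounded total variation. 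This incompatibility forces $K = 0$, so $U \equiv 0$ (resp.\ $V \equiv 0$) on a nonempty open interval, and hence $\mu = 0$ by uniqueness of the power series (resp.\ by the identity theorem combined with injectivity of the Laplace transform). The main technical hurdle will be verifying the Fubini interchange, but this is routine given that $\Qc_r$ is tridiagonal and $q < 1$; once it is in place, the singularity argument is a short calculation.
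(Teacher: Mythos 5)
Your strategy is sound and is genuinely different from the paper's proof, but as written part (ii) has one real gap: the claim that a ``preliminary direct computation'' confirms $\F\subset\dom(\Ac)$ and $\Ac\F\subseteq\F$. This works in the discrete case, because Theorem \ref{thm2.1}(iii) identifies $\Ac_r$ with the matrix $\Qc_r$ on its full domain, so $q^l l^m\in\dom(\Ac_r)$ follows from $\Qc_r(q^l l^m)\in C_0(\Z_+)$. But $\Ac$ is defined only abstractly, as the generator of the boundary semigroup of Proposition \ref{prop6.K}; there is no a priori statement that it acts on $q^x x^m$ as the differential operator $\Dc$, and you cannot test membership in the domain of an abstractly defined generator by applying a differential expression. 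The correct route is the intertwining: by \eqref{eq5.B}, $\LB^\infty_r$ maps $\dom(\Ac_r)$ into $\dom(\Ac)$ with $\Ac\,\LB^\infty_r=\LB^\infty_r\,\Ac_r$, and by \eqref{eq5} with $r=-\log q$ the functions $q^x x^m$ are, up to constants, the images $\LB^\infty_r\mathbf1_m$; the action of $\Ac$ on them is then computed exactly as in Proposition \ref{prop6.I}. This repair is not circular --- the computation behind Proposition \ref{prop6.I} uses only \eqref{eq5} and the generator-level intertwining, not the present proposition --- but without it your pairing $\langle\mu,(\lambda-\Ac)G_t\rangle$ is undefined. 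Once this is in place, the rest of your argument checks out: the Fubini estimate is valid since $\sup_{x}x^{m}e^{-x\log(1/q)}=\bigl(m/(e\log(1/q))\bigr)^m\sim m!\,(\log(1/q))^{-m}/\sqrt{2\pi m}$, so the series converges precisely for $t<\log(1/q)$ and closedness of the generator pushes it through; your closed-form actions on exponentials are correct; the two first-order ODEs have their singular points ($u=1$, $u=(r+1)/r$, resp.\ $\alpha=0$, $\alpha=1$) off the open intervals, so the one-constant solutions $U(u)=K(u-1)^{-\lambda}\bigl(u-(r+1)/r\bigr)^{\lambda-c}$ and $V(\alpha)=K\alpha^{-\lambda}(\alpha-1)^{\lambda-c}$ are the general ones there; and the blow-up of $(1-u)^{-\lambda}$, resp.\ $|\alpha|^{-\lambda}$, against continuity of $U$ at $u=1$ (finite total variation) resp.\ of $V$ at $\alpha=0$ forces $K=0$, after which analyticity and injectivity of the Laplace transform give $\mu=0$.

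For comparison, the paper does none of this analysis: it transports the core property along the links. By Theorem \ref{thm2.1}(iv), $C_c(\Z_+)$ is a core for $\Ac_{r_1}$; since the links have dense range (Proposition \ref{prop6.G}) and intertwine the generators, the image of a core under a link is again a core (the same Hille--Yosida criterion you invoke, applied as in Proposition \ref{prop5.A}); and formulas \eqref{eq2} and \eqref{eq5} identify $\LB^{r}_{(1-q)r}\,C_c(\Z_+)$ and $\LB^\infty_{-\log q}\,C_c(\Z_+)$ with exactly the spans of $q^l l^m$ and $q^x x^m$. The payoff of the paper's route is that it uses nothing one-dimensional: the identical argument is repeated verbatim for the Thoma cone in Proposition \ref{prop8.H}, where your generating-function/ODE device has no analogue, since an annihilating measure on $\wt\Om$ is not captured by a one-variable transform and the $Q$-matrix no longer closes on exponentials in a single parameter. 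What your route buys is an independent, self-contained verification in the toy case that bypasses Proposition \ref{prop6.G} entirely, and it isolates explicitly why $\lambda>0$ matters: the exponent $-\lambda$ creates the endpoint singularity that annihilates the dual measure.
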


\begin{proof}
(i) Observe that if $r_2>r_1>0$ and $\F_1$ is a core for $\Ac_{r_1}$, then
$\F_2:=\LB^{r_2}_{r_1}\F_1$ is a core for $\Ac_{r_2}$. Indeed, by virtue of
claim (i) of Proposition \ref{prop6.G}, we may apply the argument of
Proposition \ref{prop5.A}.

Now take $r_2=r$ and $r_1=(1-q)r$. Then, as seen from \eqref{eq2}, the linear
span of functions $q^y y^m$ is just the image under $\LB^{r_2}_{r_1}$ of the
space $C_c(\Z_+)$. By virtue of Proposition \ref{prop6.F} and claim (iv) of
Theorem \ref{thm2.1}, $C_c(\Z_+)$ is a core for $\Ac_{r_1}$. Therefore, its
image is a core for $\Ac_{r_2}$.

(ii) We argue as above. First, application of claim (ii) of Proposition
\ref{prop6.G} allows us to conclude that if $\F\subset C_0(\Z_+)$ is a core for
$\Ac_r$ for some $r>0$, then $\LB^\infty_r\F$ is a core for $\Ac$.

Next, given $q\in(0,1)$ we take $r=-\log q$ and $\F=C_0(\Z_+)$. As mentioned
above, $\F$ is a core for $\Ac_r$. On the other hand, \eqref{eq5} shows that
the linear span of functions $q^x x^m$ coincides with $\LB^\infty_r \F$.
\end{proof}

\subsection{The Laguerre differential operator}\label{sect6.A}

Proposition \ref{prop6.H} implies that the generator $\Ac$ is uniquely
determined by its action on functions of the form $q^x x^m$, $m=0,1,2,\dots$,
with an arbitrary fixed $q\in(0,1)$. This action can be readily computed from
the basic relation $\Ac\,\LB^\infty_r=\LB^\infty_r\Qc_r$:

\begin{proposition}\label{prop6.I}
The action of $\Ac$ on functions of the form $q^x x^m$, $m=0,1,2,\dots$ is
implemented by the differential operator
\begin{equation} \label{eq6.6}
\Dc:=x\frac{d^2}{d x^2}+(c-x)\frac{d}{dx}\,.
\end{equation}
\end{proposition}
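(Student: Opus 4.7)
The goal is to verify that $A^{(c)}\Phi = D^{(c)}\Phi$ on test functions of the form $\Phi(x) = q^x x^m$, for $q \in (0,1)$ and $m \in \Z_+$. My strategy is to avoid working with the abstract generator $A^{(c)}$ directly, and instead transport everything back to the discrete side via the link $\LB^\infty_r$, where the generator is simply the matrix $\Qc_r$. The crucial observation is formula \eqref{eq5}: with $r := -\log q > 0$ one has $\LB^\infty_r \mathbf{1}_m = (r^m/m!)\, q^x x^m$. So each target function is (up to a scalar) the $\LB^\infty_r$-image of an explicit, finitely supported function on $\Z_+$.

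First I would fix $q \in (0,1)$ and $m \in \Z_+$, set $r = -\log q$, and observe that $\mathbf{1}_m \in C_c(\Z_+)$ and $\Qc_r \mathbf{1}_m$ is finitely supported, so $\mathbf{1}_m \in \dom(\Ac_r)$ by Theorem \ref{thm2.1}(iii) applied in the context of Corollary \ref{cor6.A}, and $\Ac_r \mathbf{1}_m = \Qc_r \mathbf{1}_m$. The intertwining relation $\Ac \LB^\infty_r = \LB^\infty_r \Ac_r$ on $\dom(\Ac_r)$, which follows from Proposition \ref{prop6.K} (i.e.\ the infinitesimal form \eqref{eq5.C} of Proposition \ref{prop5.B}), then gives
\[
\Ac(q^x x^m) = \frac{m!}{r^m}\, \Ac \LB^\infty_r \mathbf{1}_m = \frac{m!}{r^m}\, \LB^\infty_r \Qc_r \mathbf{1}_m.
\]

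Next I would compute the right-hand side from the explicit matrix entries: reading off the $m$-th column of $\Qc_r$ yields
\[
\Qc_r \mathbf{1}_m = r(c+m-1)\,\mathbf{1}_{m-1} + (r+1)(m+1)\,\mathbf{1}_{m+1} - [(2r+1)m + rc]\,\mathbf{1}_m,
\]
and applying $\LB^\infty_r$ term-by-term via \eqref{eq5} turns this into an explicit linear combination of $q^x x^{m-1}$, $q^x x^m$, $q^x x^{m+1}$. On the other hand, $D^{(c)}(q^x x^m)$ is a routine product/chain-rule computation, using $\frac{d}{dx}(q^x x^m) = q^x[(\log q)\,x^m + m\,x^{m-1}]$, which also produces a linear combination of the same three functions. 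Substituting $\log q = -r$ into the coefficients coming from $D^{(c)}$ matches them with the coefficients coming from $\LB^\infty_r \Qc_r \mathbf{1}_m$, and the identity follows.

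There is no real obstacle: once the trial function is chosen well, the verification is a short algebraic check. The only design choice is picking $\mathbf{1}_m$ (rather than, say, $\alpha^l l^{\down m}$ as suggested by \eqref{eq6}); this is advantageous because $\mathbf{1}_m$ has finite support, so both membership in $\dom(\Ac_r)$ and the action of $\Qc_r$ on it are immediate, while the image $\LB^\infty_r \mathbf{1}_m$ is proportional to the desired $q^x x^m$ with $r$ uniquely determined by $q$.
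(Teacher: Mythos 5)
Your proof is correct and takes essentially the same route as the paper's: both proofs exploit $\LB^\infty_r \mathbf{1}_m = \frac{r^m}{m!}\,q^x x^m$ with $r=-\log q$ (formula \eqref{eq5}), read off $\Qc_r\mathbf{1}_m$ from the matrix entries, and match the result against a direct computation of $\Dc$ applied to $q^x x^m$ — this is exactly the pair of identities \eqref{eq6.A}--\eqref{eq6.B} in the paper — transported through the intertwining relation $\Ac\,\LB^\infty_r=\LB^\infty_r\,\Qc_r$. Your only addition is to spell out the domain bookkeeping ($\mathbf{1}_m\in\dom(\Ac_r)$ via Theorem \ref{thm2.1}(iii) and the infinitesimal intertwining \eqref{eq5.C}), which the paper leaves implicit.
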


\begin{proof}
Let $r>0$ be related to $q\in(0,1)$ by $r=-\log q$. Consider the functions
$$
f_m(x):=\frac{r^m}{m!}q^x x^m=\frac{r^m}{m!}e^{-rx} x^m, \qquad m=0,1,2,\dots,
\quad x\in\R_+.
$$
By \eqref{eq5},
$$
\LB^\infty_r \mathbf1_m=f_m, \qquad m=0,1,2,\dots\,.
$$

On the other hand, it is directly verified that the difference operator $\Qc_r$
defined in \eqref{eq6.4} acts on the delta functions $\mathbf1_m$ in the same
way as the differential operator $\Dc$ acts on the functions $f_m$:
\begin{gather}
\Qc_r
\mathbf1_m=r(c+m-1)\mathbf1_{m-1}+(r+1)(m+1)\mathbf1_{m+1}-[(2r+1)m+rc]\mathbf1_m
\label{eq6.A}\\
\Dc f_m=r(c+m-1)f_{m-1}+(r+1)(m+1)f_{m+1}-[(2r+1)m+rc]f_m, \label{eq6.B}
\end{gather}
where
$$
\mathbf1_{-1}:=0, \qquad f_{-1}:=0.
$$

This concludes the proof.
\end{proof}

Consider the gamma distribution on $\R_+$ with parameter $c$:
\begin{equation*}
\frac1{\Ga(c)}x^{c-1}e^{-x}dx, \quad x\in\R_+,
\end{equation*}
and let $\Lag_n(x;c)$ denote the {\it monic Laguerre polynomials\/} of degree
$n=0,1,2,\dots$, which are orthogonal with respect to this distribution:
\begin{equation}\label{eq6.C}
\Lag_n(x;c)=(c)_n\sum_{m=0}^n(-1)^{n-m}\frac{n^{\down m}}{(c)_m m!}x^m.
\end{equation}

The differential operator $\Dc$ is diagonalized in the basis of the Laguerre
polynomials:
\begin{equation}\label{eq6.7}
\Dc \Lag_n(\,\cdot\,;c)=-n\Lag_n(\,\cdot\,;c), \qquad n=0,1,2,\dots\,.
\end{equation}
Note also that
\begin{equation}\label{eq6.14}
\LB^\infty_r  \Meix_n(\,\cdot\,;c,r)=r^n \Lag_n(\,\cdot\,; c).
\end{equation}
The proof is immediate: we compare the expansions of the Meixner and Laguerre
polynomials in the bases $\{l^{\down m}\}$ and $\{x^m\}$, respectively  (see
\eqref{eq6.3} and \eqref{eq6.C}), and then apply \eqref{eq4}, which says that
$\LB^\infty_r$ takes the factorial monomial $l^{\down m}$ to $r^mx^m$.

\subsection{Approximation}

We use the embedding $\varphi_r:\Z_+\to\R_+$ introduced in Example \ref{ex4.A}
and define the projection $\pi_r: C_0(\R_+)\to C_0(\Z_+)$ as in Section
\ref{sect5.A}.

\begin{proposition}\label{prop6.J}
Let $c>0$ be fixed. As $r\to+\infty$, the Meixner semigroups $\Tc_r(t)$
approximate the Laguerre semigroup $\Tc(t)$ in the sense of Definition
\ref{def3.B}.
\end{proposition}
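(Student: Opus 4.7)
The plan is to apply Proposition \ref{prop5.D} directly, since all the necessary ingredients have been assembled in the preceding subsections. Concretely, I need to check that the hypotheses of that proposition hold in the present setting, namely that Condition \ref{cond4.A} and Condition \ref{cond5.A} are satisfied for the binomial projective system $\mathbb{B}$ together with the Meixner semigroups $\Tc_r(t)$ and the Laguerre semigroup $\Tc(t)$.

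First, I would record that the projective system in play is exactly $\mathbb{B}$, which was shown in Section \ref{sect4.B} to be a Feller projective system with boundary $\R_+$. By Proposition \ref{prop6.E} combined with Proposition \ref{prop5.C}, the semigroups $\Tc_r(t)$ are intertwined by the links $\LB^{r'}_r$, and by the construction of Proposition \ref{prop6.K} (an instance of Proposition \ref{prop5.B}) the Laguerre semigroup $\Tc(t)$ is precisely the boundary semigroup attached to this intertwined family. In particular the intertwining $\Tc(t)\LB^\infty_r=\LB^\infty_r \Tc_r(t)$ holds, which is the standing hypothesis of Proposition \ref{prop5.D}.

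Second, Condition \ref{cond4.A} is already verified in Example \ref{ex4.A} with the embedding $\varphi_r(l)=r^{-1}l$, which is exactly the embedding used in the statement of Proposition \ref{prop6.J}, so the projection $\pi_r$ used in Definition \ref{def3.B} matches the one required by Proposition \ref{prop5.D}. For Condition \ref{cond5.A}, I need $C_c(\Z_+)$ to be a core for $\Ac_r$ and to be invariant under $\Ac_r$. The core property is claim (iv) of Theorem \ref{thm2.1}, whose hypotheses were checked for $\Qc_r$ in Proposition \ref{prop6.F}. The invariance is immediate from the tridiagonal formula \eqref{eq6.A}: applying $\Qc_r$ to a delta function $\mathbf{1}_m$ returns a finite linear combination of $\mathbf{1}_{m-1},\mathbf{1}_m,\mathbf{1}_{m+1}$, hence $\Qc_r$ preserves finite support.

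With both conditions verified, Proposition \ref{prop5.D} yields $\Tc_r(t)\to \Tc(t)$ in the sense of Definition \ref{def3.B}, which is the desired assertion. I do not expect a serious obstacle here: the statement is precisely the concrete instance for which the abstract machinery of Sections \ref{sect4}--\ref{sect5} was designed, and all of its inputs have been established earlier in the section. The only point that warrants a moment of care is distinguishing $C_c(\Z_+)$ from $C_0(\Z_+)$ in the application of Condition \ref{cond5.A}, since it is the finitely supported functions (not merely those vanishing at infinity) that are manifestly invariant under the tridiagonal operator $\Qc_r$.
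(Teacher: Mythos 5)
Your proof is correct and follows essentially the same route as the paper: verify the standing hypotheses of Proposition \ref{prop5.D} (system $\mathbb B$ is Feller, the Meixner semigroups are intertwined by the links via Propositions \ref{prop6.E} and \ref{prop5.C}, the Laguerre semigroup is the boundary semigroup), note that Condition \ref{cond4.A} was checked in Example \ref{ex4.A}, and settle Condition \ref{cond5.A} by the core property from Theorem \ref{thm2.1}(iv) together with the manifest invariance of $C_c(\Z_+)$ under the tridiagonal operator $\Qc_r$. Your closing remark distinguishing $C_c(\Z_+)$ from $C_0(\Z_+)$ is well taken: the paper cites Corollary \ref{cor6.A}(ii) for the core property, but that item's statement reads $C_0(\Z_+)$ where $C_c(\Z_+)$ is evidently intended (as in Theorem \ref{thm2.1}(iv), which you cite directly), since the unbounded operator $\Qc_r$ cannot have all of $C_0(\Z_+)$ in its generator's domain.
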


\begin{proof}
Let us check all the hypotheses of Proposition \ref{prop5.D}. Then the desired
result will follow from that proposition.

In fact, the assumptions stated in Section \ref{sect5.B} are satisfied: we know
that $\mathbb B$ is a Feller system, the Meixner semigroups are consistent with
the links of $\mathbb B$, those are  Feller links, and, by the very definition,
the Laguerre semigroup is the boundary semigroup determined by the Meixner
semigroups.

Next, the fulfilment of Condition \ref{cond4.A} was established in Example
\ref{ex4.A}.

It remains to check Condition \ref{cond5.A}. In our situation, it consists in
the requirement that $C_c(\Z_+)$ is a core for generator $A^{(c)}_r$ and,
moreover, is invariant under its action. The fact that $C_c(\Z_+)$ is a core
follows from Corollary \ref{cor6.A}, item (ii). Its invariance follows from
item (i), because $C_c(\Z_+)$ is obviously invariant under the action of
$\Qc_r$.

This completes the proof.
\end{proof}

\section{A few definitions}\label{sect7}

Here we collect some basic definitions that will be needed in the next section.
For a more detailed information we refer to Sagan \cite{Sagan} and Stanley
\cite{Stanley} (generalities on Young diagrams, Young tableaux, and symmetric
functions); Olshanski--Regev--Vershik \cite{ORV-Birkh03} (Frobenius--Schur
symmetric functions); Borodin--Olshanski \cite{BO-MMJ13}, \cite{BO-PTRF09}
(Thoma's simplex and Thoma's cone).

\subsection{Young diagrams}

Recall that the {\it Young poset\/} is the set $\Y$ of all Young diagrams
(including the empty diagram $\varnothing$) with the partial order determined
by containment of one Young diagram in another. For $\la\in\Y$ we denote by
$|\la|$ the number of boxes of $\la$ and we set
$$
\Y_n=\{\la\in\Y: |\la|=n\}, \qquad n=0,1,2,\dots\,.
$$
This makes $\Y$ a {\it graded poset\/}. It is actually a lattice, so it is
often called the {\it Young lattice\/}.

The {\it dimension\/} of a diagram $\la\in\Y$, denoted by $\dim\la$, is the
number of standard Young tableaux of shape $\la$, which is the same as the
number of saturated chains
$$
\varnothing=\la^{(0)}\subset \la^{(1)}\subset\dots\subset \la^{(n)}=\la, \qquad
n:=|\la|,
$$
in the poset $\Y$.

More generally, for arbitrary two diagrams $\mu,\la\in\Y$ we define
$\dim(\mu,\la)$ as the number of standard Young tableaux of skew shape
$\la/\mu$ provided that $\mu\subseteq\la$; otherwise $\dim(\mu,\la)=0$ (let us
agree that $\dim(\la,\la)=1$). Obviously, $\dim\la=\dim(\varnothing,\la)$. If
$\mu\subset\la$, then $\dim(\mu,\la)$ equals the number of saturated chains
with ends $\mu$ and $\la$.

\subsection{Symmetric functions}

By $\Sym$ we denote the graded algebra of symmetric functions over the base
field $\R$. We will need two bases in $\Sym$, both indexed by arbitrary
diagrams $\mu\in\Y$: the {\it Schur functions\/} $S_\mu$ and the {\it
Frobenius--Schur functions\/} $\FS_\mu$. The relationship between $S_\mu$'s and
$\FS_\mu$'s is similar to the relationship between the one-variate monomials
$x^m$ and their factorial counterparts $x^{\down m}$. Observe that $x^{\down
m}$ can be characterized as a unique polynomial in $x$ with highest degree term
$x^m$ and such that it vanishes at the integer points $0,1\dots,m-1$. Likewise,
one can realize $\Sym$ as a subalgebra in $\Fun(\Y)$, the algebra of
real-valued functions on $\Y$ with all operations defined pointwise; see the
next two paragraphs. Then $\FS_\mu$ can be characterized as a unique element of
$\Sym$ that has top degree term $S_\mu$ and vanishes at all diagrams strictly
contained in $\mu$.

Let $p_1,p_2,\dots$ denote the power-sum symmetric functions. We turn them into
functions on $\Y$ by setting
$$
p_k(\la):=\sum_{i=1}^\infty \left((\la_r-i+\tfrac12)^k-(-i+\tfrac12)^k\right)
=\sum_{i=1}^d\left(a_r^k+(-1)^{k-1}b_r^k\right),
$$
where $\la$ ranges over $\Y$, $(\la_1,\la_2,\dots)$ is the partition
corresponding to $\la$, $d$ is the number of boxes on the main diagonal of
$\la$, and $(a_1,\dots,a_d;b_1,\dots,b_d)$ is the collection of the {\it
modified Frobenius coordinates\/} of $\la$:
$$
a_r=\la_r-i+\tfrac12, \quad b_r=\la'_r-i+\tfrac12, \qquad i=1,\dots,d
$$
(here $\la'$ is the transposed diagram). One can easily prove that the
resulting functions remain algebraically independent.

Next, every element $F\in\Sym$ is uniquely written as a polynomial in
$p_1,p_2,\dots$; then we define $F(\la)$ as the same polynomial in numeric
variables $p_1(\la), p_2(\la),\dots$\,. In this way we get the desired
embedding of $\Sym$ into $\Fun(\Y)$.

A fundamental property of the Frobenius--Schur functions is the following
identity (see \cite[Section 2]{ORV-Birkh03}) relating them to the dimension
function in the poset $\Y$:
\begin{equation}\label{eq7.A}
l^{\down m}\frac{\dim(\mu,\la)}{\dim \la}=\FS_\mu(\la), \qquad l:=|\la|, \quad
m:=|\mu|.
\end{equation}

\subsection{The Thoma simplex and the Thoma cone}

The {\it Thoma simplex\/} is the subspace $\Om$ of the infinite product space
$\R_+^\infty\times\R_+^\infty$ formed by all couples $(\al,\be)$, where
$\al=(\al_i)$ and $\be=(\be_i)$ are two infinite sequences such that
\begin{equation}\label{eq20}
\al_1\ge\al_2\ge\dots\ge0, \qquad \be_1\ge\be_2\ge\dots\ge0
\end{equation}
and
\begin{equation}\label{eq21}
\sum_{i=1}^\infty\al_i+\sum_{i=1}^\infty\be_i\le 1.
\end{equation}
We equip $\Om$ with the product topology inherited from
$\R_+^\infty\times\R_+^\infty$. Note that in this topology, $\Om$ is a compact
metrizable space.

The {\it Thoma cone\/} $\wt\Om$ is the subspace of the infinite product space
$\R_+^\infty\times\R_+^\infty\times\R_+$ formed by all triples
$\om=(\al,\be,\de)$, where $\al=(\al_i)$ and $\be=(\be_i)$ are two infinite
sequences and $\de$ is a nonnegative real number, such that the couple
$(\al,\be)$ satisfies \eqref{eq20} and the modification of the inequality
\eqref{eq21} of the form
$$
\sum_{i=1}^\infty\al_i+\sum_{i=1}^\infty\be_i\le \de.
$$
We set $|\om|=\de$.

Note that $\wt\Om$ is a locally compact space in the product topology inherited
from $\R_+^\infty\times\R_+^\infty\times\R_+$. The space $\wt\Om$ is also
metrizable and has countable base. Every subset of the form $\{\om\in\wt\Om:
|\om|\le\const\}$ is compact. Therefore, a sequence of points $\om_n$ goes to
infinity in $\wt\Om$ if and only if $|\om_n|\to\infty$.

We will identify $\Om$ with the subset of $\wt\Om$ formed by triples
$\om=(\al,\be,\de)$ with $\de=1$. The name ``Thoma cone'' given to $\wt\Om$ is
justified by the fact that $\wt\Om$ may be viewed as the cone with the base
$\Om$: the ray of the cone passing through a base point $(\al,\be)\in\Om$
consists of the triples $\om=(r\al,r\be,r)$, $r\ge0$.

More generally, for $\om=(\al,\be,\de)\in\wt\Om$ and $r>0$ we set
$r\om=(r\al,r\be,r\de)$.

\subsection{The embeddings $\Sym\to \Fun(\wt\Om)$ and
$\Y\to\wt\Om$}\label{sect7.A}

We embed $\Sym$ into the algebra of (non necessarily bounded) continuous
functions on the Thoma cone by setting
$$
p_k(\om)=\begin{cases}\sum_{i=1}^\infty \al_i^k+ (-1)^{k-1}\sum_{i=1}^\infty
\be_i^k, & k=2,3,\dots\\
|\om|, & k=1,
\end{cases}
$$
where $\om$ ranges over $\wt\Om$.

We embed the set $\Y$ into $\wt\Om$ through the map
$$
\la\mapsto \om_\la:=((a_1,\dots,a_d,0,0,\dots), \, (b_1,\dots,b_d,0,0,\dots),
\, |\la|),
$$
where, as above, $(a_1,\dots,a_d;b_1,\dots,b_d)$ is the collection of the
modified Frobenius coordinates of a diagram $\la\in\Y$. Note that
$|\om_\la|=|\la|$.

For any $F\in\Sym$, the restriction of the function $F(\om)$ to the subset
$\Y\subset\wt\Om$ agrees with the previous definition of the function $F(\la)$:
$$
F(\om_\la)=F(\la), \qquad \la\in\Y.
$$

\section{Construction of Feller processes on the Thoma cone}\label{sect8}

\subsection{The projective system associated with the Young bouquet}
The representation theory of inductive limit groups provides two fundamental
examples of projective systems. One is related to the infinite symmetric group
$S(\infty)$ and comes from the Young graph $\Y$, and the other one is related
to the infinite-dimensional unitary group $U(\infty)$ and comes from the
Gelfand--Tsetlin graph $\GT$. The boundaries of these two projective systems
can be viewed as dual objects to $S(\infty)$ and $U(\infty)$, respectively. In
attempt to explain a surprising similarity between the two boundaries, we
introduced in \cite{BO-MMJ13} a new object which serves as a ``mediator''
between $\Y$ and $\GT$. We called it the {\it Young bouquet\/}; it is a close
relative of $\Y$ and at the same time it can be obtained as a degeneration of
$\GT$. Associated with the Young bouquet is a new projective system denoted by
$\YB$. Because $\GT$ is graded by discrete set $\Z_+$, the associated
projective system has $\Z_+$ as its index set, but under degeneration the index
set becomes continuous. Here is a formal definition of $\YB$:

The index set of the projective system $\YB$ is the set $\R_{>0}$ and each set
$E_r$ is a copy of the set $\Y$. For every couple $r'>r$ of positive real
numbers, the corresponding link $\Y\dasharrow\Y$ is the following stochastic
matrix of format $\Y\times\Y$:
\begin{equation}\label{eq8.R}
\LYB^{r'}_r(\la,\mu)=\left(1-\frac{r}{r'}\right)^{l-m}\left(\dfrac{r}{r'}\right)^m\,
\dfrac{l!}{(l-m)!\,m!}\,\dfrac{\dim\mu\, \dim(\mu,\la)}{\dim\la},
\end{equation}
where $l:=|\la|$ and $m:=|\mu|$.

Note that \eqref{eq8.R} factorizes into a product of two links, which refer to
two projective systems, the binomial system $\mathbb B$ and the Young graph
$\Y$:
\begin{equation}\label{eq8.S}
\LYB^{r'}_r(\la,\mu)=\LB^{r'}_r(l,m)\,\LY^l_m(\la,\mu),
\end{equation}
where
\begin{equation}\label{eq8.T}
\LY^l_m(\la,\mu):=\dfrac{\dim\mu\, \dim(\mu,\la)}{\dim\la}.
\end{equation}

The links \eqref{eq8.R} satisfy the relation
$$
\LYB^{r''}_{r'}\,\LYB^{r'}_r=\LYB^{r''}_r, \qquad r''>r'>r,
$$
so that they do determine a projective system. We refer to \cite{BO-MMJ13} for
more details.

By \cite[Theorem 3.4.7]{BO-MMJ13}, the boundary of $\YB$ is the Thoma cone
$\wt\Om$ together with a family of links $\wt\Om\dasharrow\Z_+$ indexed by
positive real numbers $r$ and given by
\begin{equation}\label{eq8.2}
\LYB^\infty_r(\om, \mu)=e^{-r|\om|}\frac{r^m}{m!}\,\dim\mu\cdot S_\mu(\om),
\qquad \om\in\wt\Om, \quad \mu\in\Y.
\end{equation}
Recall that $S_\mu$ is the Schur symmetric function and its value at
$\om\in\wt\Om$ is understood in accordance with the definition given in Section
\ref{sect7.A}.

\begin{proposition}\label{prop8.C}
The projective system $\YB$ is Feller in the sense of the definition given in
Section \ref{sect4.A}.
\end{proposition}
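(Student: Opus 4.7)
The plan is to verify the three conditions defining a Feller projective system from Section~\ref{sect4.B}: (a) every finite-level link $\LYB^{r'}_r$ is Feller; (b) the boundary $\wt\Om$ is locally compact Hausdorff in the intrinsic topology; (c) every boundary link $\LYB^\infty_r$ is Feller.

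For (a), I would exploit the factorization \eqref{eq8.S}, $\LYB^{r'}_r(\la,\mu)=\LB^{r'}_r(l,m)\,\LY^l_m(\la,\mu)$ with $l=|\la|$, $m=|\mu|$. Going to infinity on the discrete space $\Y$ means $|\la|\to\infty$. For fixed $\mu$ the factor $\LY^l_m(\la,\mu)$ is bounded by $1$ (it is a probability in $\mu$), while $\LB^{r'}_r(l,m)$ tends to $0$ as $l\to\infty$, which was already observed when $\mathbb B$ was shown to be Feller in Section~\ref{sect4.B}. Hence $\LYB^{r'}_r(\cdot,\mu)\in C_0(\Y)$, as required.

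Condition (c) will be checked directly from \eqref{eq8.2}: $\LYB^\infty_r(\om,\mu)=e^{-r|\om|}\,\frac{r^m}{m!}\dim\mu\cdot S_\mu(\om)$. Continuity in the product topology on $\wt\Om$ reduces to continuity of $|\om|$ (trivial) and of $S_\mu(\om)$, which is a polynomial in the power sums $p_k$, the defining series of $p_k(\om)$ for $k\ge2$ converging uniformly on each slab $\{|\om|\le C\}$ thanks to the elementary estimate $\al_i,\be_i\le|\om|/i$. For vanishing at infinity, recall that a sequence in $\wt\Om$ escapes to infinity iff $|\om|\to\infty$; since the scaling $p_k(t\om)=t^kp_k(\om)$ forces the homogeneity $S_\mu(t\om)=t^mS_\mu(\om)$, and $S_\mu$ is bounded on the compact base $\Om$, one gets $|S_\mu(\om)|\le M_\mu|\om|^m$, so the exponential factor dominates.

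The main work lies in (b), which I plan to do in three steps. First, show the intrinsic topology is Hausdorff: since $\LYB^\infty_r(\om,\varnothing)=e^{-r|\om|}$ recovers $|\om|$ (for any single $r$), dividing out then produces every $S_\mu(\om)$, so Hausdorffness amounts to the fact that the algebra $\Sym$ (equivalently, the family $\{p_k\}$) separates points of $\wt\Om$, which is a standard consequence of Thoma's theorem. Second, observe that the identity map $(\wt\Om,\text{product})\to(\wt\Om,\text{intrinsic})$ is continuous: this is exactly what (c) gives. Third, invoke the classical principle that a continuous bijection from a compact space to a Hausdorff space is a homeomorphism. The slabs $\{|\om|\le C\}$ are compact in the product topology, so the identity is a homeomorphism on each of them, and hence the two topologies coincide globally; local compactness and metrizability are then inherited from the product topology. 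The principal obstacle is the point-separation required in the first step; beyond simply citing Thoma's theorem, a self-contained argument recovers the decreasing sequences $\al_i$ and $\be_i$ from the power sums by a standard moment-inversion in the spirit of the symmetric-function identities underlying that theorem.
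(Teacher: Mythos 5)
Your proposal is correct in substance, and for parts (a) and (c) it actually supplies details the paper leaves implicit: the paper dismisses the Feller property of both families of links in one line as immediate from the Feller property of $\LB^{r'}_r$ and $\LB^\infty_r$, which amounts to exactly your bound $\LY^l_m(\la,\mu)\le 1$ together with the decay of the binomial factor, and, for the boundary links, to the estimate $0\le \LYB^\infty_r(\om,\mu)\le e^{-r|\om|}(r|\om|)^m/m!$ (nonnegative summands over $\Y_m$ adding up to the Poisson weight); your uniform-convergence-on-slabs and homogeneity arguments for $S_\mu(\om)$ prove the continuity and growth facts that the paper takes for granted from its Section \ref{sect7.A}. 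The genuine divergence is in (b). Like you, the paper reduces to the single-$r$ family $\om\mapsto\LYB^\infty_r(\om,\mu)$ and to point separation by $p_1,p_2,\dots$ (also asserted there without proof), but it organizes the compactness step differently: it extends the coordinate functions by $0$ to the one-point compactification $\wt\Om\cup\{\infty\}$, checks that the resulting map into a product of intervals is injective --- the point $\infty$ being distinguished from finite points by summing over $\mu\in\Y_m$, where $\sum_{\mu\in\Y_m}\dim\mu\,S_\mu=(p_1)^m$ collapses the coordinates to the quantities $e^{-rx}(rx)^m/m!$ with $x=|\om|$, which determine $x\in[0,+\infty]$ --- and then applies ``continuous injection of a compact space into a Hausdorff space is a homeomorphism onto its image'' once, globally. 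You instead exhaust $\wt\Om$ by the compact slabs $\{|\om|\le C\}$ and apply the same principle slabwise.

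One step of your version needs a sentence of repair: ``the two topologies agree on each slab, hence coincide globally'' is not a valid inference by itself. Two comparable Hausdorff topologies can agree on every member of a compact exhaustion and still differ globally; for instance, the discrete topology on $\Z$ and a coarser topology making $\Z$ homeomorphic to $\{0\}\cup\{1/n:\,n\ge1\}$ agree on all finite subsets. The fix is an observation you already have in step (b1): since $e^{-r|\om|}=\LYB^\infty_r(\om,\varnothing)$ is one of the defining coordinates, the function $|\om|$ is continuous in the intrinsic topology, so each set $\{|\om|<C\}$ is intrinsically open. Then, given a product-open $U$ and $\om_0\in U$ with $|\om_0|<C$, the set $U\cap\{|\om|<C\}$ is relatively open in the slab for the intrinsic topology, i.e., of the form $W\cap\{|\om|\le C\}$ with $W$ intrinsically open, and $V:=W\cap\{|\om|<C\}$ is an intrinsically open neighborhood of $\om_0$ inside $U$. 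With that addition your slab argument is complete and fully equivalent to the paper's; the paper's one-point-compactification route avoids this gluing issue at the cost of the extra injectivity check at $\infty$.
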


\begin{proof}
The links $\LYB^{r'}_{r}$ and $\LYB^\infty_r$ are Feller: this immediately
follows from the Feller property of the links $\LB^{r'}_{r}$ and
$\LB^\infty_r$. It remains to show that the product topology of the space
$\wt\Om$ coincides with that defined by all the maps
$\om\mapsto\La^\infty_r(\om, \mu)$, where $r$ ranges over $\R_{>0}$ and $\mu$
ranges over $\Y$. Actually, this holds even if $r$ is any fixed number $>0$,
and the argument is similar to that given in Section \ref{sect4.A}.

Namely, we extend the above maps to the one-point compactification
$\wt\Om\cup\infty$ of $\wt\Om$ in a natural way: the value at infinity is equal
to 0 for any $\mu$, which agrees with the Feller property of the links. Then we
only have to check that any point of $\wt\Om\cup\infty$ is uniquely determined
by its images under the (extended) maps $\LYB^\infty_r(\,\cdot\,,\mu)$, where
$\mu$ ranges over $\Y$.

To do this, assume first that $\om\in\wt\Om$ and recall \eqref{eq8.2}. Keeping
$m$ fixed and summing the quantity in the right-hand side over $\mu\in\Y_m$ we
get
$$
e^{-rx}\frac{(rx)^m}{m!}, \qquad x:=|\om|,
$$
because
$$
\sum_{\mu\in\Y_m}\dim\mu\,S_\mu=(p_1)^m
$$
and $p_1(\om)=|\om|=x$.

Observe that for $r>0$ fixed, the quantities $e^{-rx}\frac{(rx)^m}{m!}$, where
$m$ ranges over $\Z_+$, determine $x$ uniquely. It follows, in particular, that
we can recognize whether we are dealing with an element of the Thoma cone
$\wt\Om$ or the added point $\infty$, because the latter case corresponds to
$x=+\infty$.

Therefore, it suffices to check that an element $\om\in\wt\Om$ is uniquely
determined by the quantities $S_\mu(\om)$, where $\mu$ ranges over $\Y$. But
this follows from the fact that the functions $p_1(\om), p_2(\om), \dots$
separate the points of the Thoma cone.
\end{proof}

Note that $\LYB^{r'}_r(\nu,\mu)$ vanishes unless $m\le n$ and
$\mu\subseteq\la$. This implies that each row of the matrix $\LYB^{r'}_r$ has
finitely many nonzero entries, so that the link can be applied to an arbitrary
function on $\Y$.

Below we denote by $\mathbf1_\mu$ the delta function on $\Y$ concentrated at
the point $\mu$, that is,
$$
\mathbf1_\mu(\la)=\begin{cases} 1, & \la=\mu, \\ 0, & \la\ne\mu.\end{cases}
$$

\begin{proposition}[cf. Proposition \ref{prop6.A}]\label{prop8.A}
Assume that:
\begin{itemize}
\item $r'>r>0$ and\/ $0<q<1$;

\item $\la$ range over $\Y$ and $l:=|\la|$;

\item $\mu\in\Y$ is fixed and $m=|\mu|$.

\end{itemize}

Regard\/ $\LYB^{r'}_{r}$ as linear map $F\mapsto G$ transforming a function
$F(\la)$ on $\Y$ to another function $G(\la)$. Under this transformation
\begin{gather}
\FS_\mu(\la)\, \mapsto\,\left(\frac{r}{r'}\right)^m \FS_\mu(\la), \label{eq8.A}\\
(\dim\mu)^{-1}\mathbf1_\mu\, \mapsto\,
\frac1{m!}\left(\frac{1-q'}{q'}\right)^m\cdot
(q')^l \FS_\mu(\la), \qquad q':=1-\frac{r}{r'}\,, \label{eq8.B}\\
q^l \FS_\mu(\la)\, \mapsto\,\left(\frac{qr}{q'r'}\right)^m (q')^l \FS_\mu(\la),
\qquad q':=1-(1-q)\frac{r}{r'}\,. \label{eq8.C}
\end{gather}
\end{proposition}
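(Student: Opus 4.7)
The plan is to reduce Proposition \ref{prop8.A} to its one-dimensional counterpart Proposition \ref{prop6.A} by exploiting the factorization \eqref{eq8.S} of the Young-bouquet link together with the Frobenius--Schur identity \eqref{eq7.A}. The first observation is that each of the three input functions listed in the proposition can be written uniformly in the form $F(\nu)=f(|\nu|)\,\dim(\mu,\nu)/\dim\nu$ for some function $f:\Z_+\to\R$ that vanishes on $\{0,\dots,m-1\}$. Indeed, taking $f(n)=n^{\down m}$ recovers $F=\FS_\mu$ by \eqref{eq7.A}; taking $f=\mathbf1_m$ recovers $F=(\dim\mu)^{-1}\mathbf1_\mu$ (check at $\nu=\mu$, and note $\dim(\mu,\nu)=0$ for $|\nu|=m$, $\nu\ne\mu$); and taking $f(n)=q^n n^{\down m}$ recovers $F(\nu)=q^{|\nu|}\FS_\mu(\nu)$.

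The heart of the argument is to establish, for $F$ of the above form, the intertwining
\begin{equation*}
(\LYB^{r'}_r F)(\la)=\frac{\dim(\mu,\la)}{\dim\la}\,(\LB^{r'}_r f)(l),\qquad l:=|\la|.
\end{equation*}
Substituting \eqref{eq8.S}--\eqref{eq8.T} into the definition of $\LYB^{r'}_r F$, swapping the order of summation, and using that $f(n)=0$ for $n<m$, one obtains a double sum whose inner combinatorial factor is $\sum_{|\nu|=n}\dim(\mu,\nu)\dim(\nu,\la)$. This inner sum counts saturated chains in the Young lattice from $\mu$ to $\la$ that pass through some intermediate diagram of size $n$: every saturated chain from $\mu$ to $\la$ meets each intermediate level in exactly one diagram, so the sum equals $\dim(\mu,\la)$ for $m\le n\le l$ (and vanishes otherwise). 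Factoring out $\dim(\mu,\la)/\dim\la$ collapses the double sum to the claimed expression, which says that the ``shape'' and ``level'' components of the link decouple on functions of the factored form.

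With the intertwining in hand, the three identities \eqref{eq8.A}, \eqref{eq8.B}, \eqref{eq8.C} follow from formulas \eqref{eq1}, \eqref{eq2}, \eqref{eq3} of Proposition \ref{prop6.A} applied to $f(n)=n^{\down m}$, $f=\mathbf1_m$, and $f(n)=q^n n^{\down m}$ respectively. In each case the output of Proposition \ref{prop6.A} has the shape ``(constant)$\cdot$(function of $l$)$\cdot\,l^{\down m}$'', and a second application of \eqref{eq7.A} converts the factor $l^{\down m}\cdot\dim(\mu,\la)/\dim\la$ back into $\FS_\mu(\la)$. The only genuinely non-routine ingredient is the combinatorial identity $\sum_{|\nu|=n}\dim(\mu,\nu)\dim(\nu,\la)=\dim(\mu,\la)$; once this bridge between the Young lattice and the binomial projective system $\mathbb B$ is identified, everything else reduces to bookkeeping against results already established.
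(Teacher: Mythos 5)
Your proof is correct and follows essentially the same route as the paper's: the paper's proof of \eqref{eq8.C} is precisely your intertwining computation specialized to $f(n)=q^n n^{\down m}$ --- it factorizes the link via \eqref{eq8.S}, applies \eqref{eq7.A}, sums over the intermediate level using the chain-counting identity $\sum_{\ka\in\Y_k}\dim(\mu,\ka)\dim(\ka,\la)=\dim(\mu,\la)$, and then invokes formula \eqref{eq3} of Proposition \ref{prop6.A} before converting back with \eqref{eq7.A}. Your only (harmless) variation is organizational: you state the level--shape decoupling once as a uniform intertwining lemma and apply it to all three cases, whereas the paper proves \eqref{eq8.C} in full and disposes of \eqref{eq8.A} and \eqref{eq8.B} by the same method or by degeneration.
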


\begin{proof}
Let us prove \eqref{eq8.C}. The function $F(\la):=q^l \FS_\mu(\la)$ vanishes
unless $\la\supseteq\mu$,  and the same holds for $\LYB^{r'}_{r}F$, because the
matrix $\LYB^{r'}_{r}$ is lower triangular with respect to the partial order on
$\Y$ determined by the inclusion relation. Therefore, it suffices to compute
$(\LYB^{r'}_{r}F)(\la)$ for $\la\supseteq\mu$; in particular, $l\ge m$. We have
$$
(\LYB^{r'}_{r}F)(\la)
=\sum_{k=m}^l\LB^{r'}_{r}(l,k)\sum_{\ka\in\Y_k}\LY^l_k(\la,\ka)q^k\FS_\mu(\ka).
$$
For fixed $k$,
\begin{align*}
\LY^l_k(\la,\ka)q^k\FS_\mu(\ka)&=q^k\frac{\dim\ka\,\dim(\ka,\la)}{\dim\la}\,
k^{\down m}\,\frac{\dim(\mu,\ka)}{\dim\ka}\quad \textrm{by virtue of \eqref{eq7.A}}\\
&=q^k k^{\down m}\frac{\dim(\mu,\ka)\,\dim(\ka,\la)}{\dim\la},
\end{align*}
and summing the latter quantity over $\ka\in\Y_k$ gives
$$
q^k k^{\down m}\frac{\dim(\mu,\la)}{\dim\la}.
$$
Therefore,
\begin{align*}
(\LYB^{r'}_{r}F)(\la) &=k^{\down m}\frac{\dim(\mu,\la)}{\dim\la}\cdot
\sum_{k=m}^l\LB^{r'}_{r}(l,k)q^k\\
&=k^{\down
m}\frac{\dim(\mu,\la)}{\dim\la}\cdot\left(\frac{qr}{q'r'}\right)^m(q')^ll^{\down
m} \quad \textrm{by \eqref{eq3}}\\
&=\left(\frac{qr}{q'r'}\right)^m(q')^l\FS_\mu(\la),
\end{align*}
as desired.

Formula \eqref{eq8.A} can be checked in exactly the same way. Alternatively, it
can be obtained a limit case of \eqref{eq8.C} as $q\to1$.

Formula \eqref{eq8.B} is immediate from the very definition of $\LYB^{r'}_{r}$
and $\FS_\mu$. Alternatively, \eqref{eq8.B} can also be obtained from
\eqref{eq8.C} by a degeneration, like the derivation of \eqref{eq2} from
\eqref{eq3}, see the proof of Proposition \ref{prop6.A}.
\end{proof}

\begin{proposition}[cf. Proposition \ref{prop6.B}]\label{prop8.B}
Assume $r>0$ and\/ $0<q<1$; let $\la$ range over $\Y$ and $l=|\la|$; let $\om$
range over $\wt\Om$ and $x=|\om|$; let $\mu\in\Y$ be fixed and $m=|\mu|$.
Regard $\LYB^{\infty}_{r}$ as an operator transforming a function $F(\la)$ on
$\Y$ to a function $G(\om)$ on $\wt\Om$. Under this transformation
\begin{gather}
\FS_\mu(\la)\,\mapsto\, r^{m} S_\mu(\om),  \label{eq8.G}\\
(\dim\mu)^{-1}\mathbf1_\mu\, \mapsto\, \frac{r^m}{m!}\cdot q_\infty^x
S_\mu(\om),
\qquad q_\infty:=e^{-r},  \label{eq8.H}\\
q^l \FS_\mu(\la)\,\mapsto\, q^m r^{m}\cdot q_\infty^x S_\mu(\om), \qquad
q_\infty:=e^{-(1-q)r}. \label{eq8.I}
\end{gather}
\end{proposition}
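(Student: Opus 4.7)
The plan is to parallel the proof of Proposition \ref{prop8.A}, with the Poisson-type link $\LYB^\infty_r$ replacing the binomial-type link $\LYB^{r'}_r$, and with exponential series $\sum_n (r|\om|)^n/n!$ replacing the finite binomial-type sums used there. The direct route seems cleanest: I would combine the explicit formula \eqref{eq8.2} for $\LYB^\infty_r(\om,\nu)$ with the identity \eqref{eq7.A}, $\FS_\mu(\nu) = |\nu|^{\down m}\dim(\mu,\nu)/\dim\nu$, valid whenever $\mu\subseteq\nu$.

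Step 1 is \eqref{eq8.H}, which is immediate: plugging $F = (\dim\mu)^{-1}\mathbf1_\mu$ into $(\LYB^\infty_r F)(\om)=\sum_\nu \LYB^\infty_r(\om,\nu)F(\nu)$ collapses the sum to a single term, which by \eqref{eq8.2} equals $(r^m/m!)\,e^{-r|\om|}S_\mu(\om)$.

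Step 2 is \eqref{eq8.I}. With $F(\la)=q^{|\la|}\FS_\mu(\la)$, the factor $\dim\nu$ in \eqref{eq8.2} cancels against the one in \eqref{eq7.A}, and after the substitution $n = |\nu|-m$ the expression for $(\LYB^\infty_r F)(\om)$ takes the form
\[
e^{-r|\om|}(qr)^m\sum_{n\ge 0}\frac{(qr)^n}{n!}\sum_{\substack{\nu\supseteq\mu\\ |\nu|=m+n}}\dim(\mu,\nu)\,S_\nu(\om).
\]
The key combinatorial ingredient is the iterated Pieri rule for Schur functions,
\[
\sum_{\substack{\nu\supseteq\mu\\ |\nu|=m+n}}\dim(\mu,\nu)\,S_\nu \;=\; p_1^n\,S_\mu,
\]
which collapses the inner sum to $|\om|^n S_\mu(\om)$ (using $p_1(\om)=|\om|$). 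The outer series then sums to $e^{qr|\om|}$, and combining with the prefactor $e^{-r|\om|}$ yields exactly $q^mr^m e^{-(1-q)r|\om|}S_\mu(\om)$, as required.

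Formula \eqref{eq8.G} is either the formal $q\to 1$ specialization of the same computation (the factor $q^{|\la|}$ disappears and $q_\infty\to 1$), or can be derived independently by the identical Pieri argument without the $q$ factor. As an alternative route, all three identities can be read off from the formulas of Proposition \ref{prop8.A} by passing to the boundary limit $l\to\infty$, $r'\to\infty$, $l/r'\to x$, much like the second proof of Proposition \ref{prop6.B}; the relevant Poisson-from-binomial approximation is \eqref{eq4.C}. The main obstacle is really only bookkeeping, keeping the factors $m!$, $\dim$, falling factorials, and powers of $r$ aligned so that the intended cancellations occur. There is no analytic subtlety: the Pieri sum in each shell $|\nu|=m+n$ is finite, and the outer series in $n$ converges absolutely for every fixed $\om\in\wt\Om$.
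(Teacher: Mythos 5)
Your proof is correct and takes essentially the same route as the paper: the paper's proof of Proposition \ref{prop8.B} is precisely ``argue exactly as in the proof of the previous proposition,'' i.e.\ the direct computation you carry out, combining \eqref{eq8.2} with \eqref{eq7.A} and using the iterated Pieri identity $\sum_{\nu\supseteq\mu,\,|\nu|=m+n}\dim(\mu,\nu)\,S_\nu=p_1^nS_\mu$ as the boundary analog of the chain-counting identity used in Proposition \ref{prop8.A}. The only cosmetic difference is that you sum the exponential series explicitly rather than factoring the link through $\LB^\infty_r$ and quoting \eqref{eq6}, which changes nothing of substance.
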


\begin{proof} We may argue exactly as in the proof of the previous proposition.
\end{proof}

\subsection{Markov semigroups on $\Y$ and $\wt\Om$}\label{sect8.B}

We are going to introduce a $Q$-matrix of format $\Y\times\Y$ depending on the
triple $(z,z',r)$ of parameters, where $r>0$ and $(z,z')$, as usual, is subject
to Condition \ref{cond1.A}. For this we need some notation. Given $\la\in\Y$,
let $\la^+$ and $\la^-$ stand for the collections of boxes that can appended
to, respectively, removed from $\la$. For a box $\Box$, its {\it content\/} is
defined as the difference $c(\Box):=j-i$, where $i$ and $j$ are the row and
column numbers of $\Box$. The $Q$-matrix in question is denoted by $\Qz$ and
its non-diagonal entries $\Qz(\la,\ka)$, $\ka\ne\la$, vanish unless either
$\ka=\la+\Box$ or $\ka=\la-\Box$, meaning that $\ka$ is obtained from $\la$ by
appending a box $\Box\in\la^+$ or by removing a box $\Box\in\la^-$. In this
notation, the entries are given by
\begin{equation}\label{eq8.L}
\begin{aligned}
\Qz(\la,\la+\square)&=r(z+c(\square))(z'+c(\square))
\frac{\dim(\la+\square)}{(|\la|+1)\dim\la}, \quad
\square\in\la^+,\\
\Qz(\la,\la-\square)&=(r+1)\frac{|\la|\dim(\la-\square)}{\dim\la}, \quad \square\in\la^-,\\
-\Qz(\la, \la)&=(2r+1)|\la| +rzz'.
\end{aligned}
\end{equation}

Note that each row of $Q^\z_r$ has finitely many nonzero entries which sum to
0, and the constraints on the parameters imply that all off-diagonal entries
are nonnegative (in particular, $Q^\z_r(\la,\la+\Box)>0$ because of Condition
\ref{cond1.A}).

(For more detail about the definition of $\Qz$, we refer to Borodin--Olshanski
\cite{BO-PTRF06} and Olshanski \cite{Ols-IMRN12}. Formula \eqref{eq8.L}
coincides with that of \cite[Proposition 4.25]{Ols-IMRN12} and is a particular
case of \cite[(2.19)]{BO-PTRF06}. Note that parameter $\xi\in(0,1)$ from those
two papers is related to our parameter $r>0$ by $\xi=r(r+1)^{-1}$. In
\cite[(2.19)]{BO-PTRF06}, parameter $\xi$ may vary with time; our setup
corresponds to the particular case when $\xi$ is fixed, so that the time
derivative $\Dot\xi$ equals 0. Then formula \cite[(2.19)]{BO-PTRF06} simplifies
and reduces to \eqref{eq8.L}.)

We can interpret $\Qz$ as an operator in the vector space $\Fun(\Y)$ formed by
arbitrary real-valued functions on $\Y$:
\begin{equation}\label{eq8.K}
(\Qz F)(\la)=\sum_{\ka\in\Y}\Qz(\la,\ka)F(\ka), \qquad F\in\Fun(\Y).
\end{equation}

As explained in \cite{Ols-IMRN12}, this operator should be viewed as a
counterpart of the Meixner difference operator on $\Z_+$. The next step is to
introduce counterparts of the Meixner polynomials. According to
\cite[Definition 4.21]{Ols-IMRN12}, these are elements of $\Sym$ called the
{\it Meixner symmetric functions\/} and denoted by  $\MM_\nu$, where the index
$\nu$ ranges over $\Y$. They depend on the triple $(z, z',r)$ and are given by
the following expansion in the basis of the Frobenius--Schur symmetric
functions (cf. \eqref{eq6.3}):
\begin{equation}\label{eq8.D}
\begin{aligned}
\MM^{(z,z',r)}_\nu &=\sum_{\mu:\,\mu\subseteq\nu}
(-1)^{|\nu|-|\mu|}r^{|\nu|-|\mu|}
\frac{\dim\nu/\mu}{(|\nu|-|\mu|)!}\\
&\times \prod_{\square\in\nu/\mu}(z+c(\square))(z'+c(\square))\cdot \FS_\mu.
\end{aligned}
\end{equation}

\begin{proposition}[cf. Proposition \ref{prop6.D}]\label{prop8.D}
Under the action of $\Qz$ in $\Fun(\Y)$,
\begin{gather}
\FS_\mu \to
-|\mu|\FS_\mu+r\sum_{\Box\in\mu^-}(z+c(\Box))(z'+c(\Box))\FS_{\mu\setminus\Box},
\label{eq8.E}
\\
\MM^{(z,z',r)}_\mu\to -|\mu|\MM_\mu. \label{eq8.F}
\end{gather}
\end{proposition}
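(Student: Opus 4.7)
The plan is to prove \eqref{eq8.E} first by a direct calculation in the $\FS_\mu$-basis, and then to deduce \eqref{eq8.F} algebraically from \eqref{eq8.E} and the explicit expansion \eqref{eq8.D} of $\MM^{(z,z',r)}_\nu$.

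For \eqref{eq8.E}, I would evaluate $(\Qz\FS_\mu)(\la)$ at an arbitrary $\la\in\Y$ by splitting the sum in \eqref{eq8.K} into three pieces according to \eqref{eq8.L} (the contributions from $\la^+$, from $\la^-$, and from the diagonal $\Qz(\la,\la)$), and converting each occurrence of $\FS_\mu(\la')$ into a ratio of dimensions via \eqref{eq7.A}. The diagonal term and the sum over $\la^-$ are the easy part: using the ``downward'' branching identity $\sum_{\Box\in\la^-}\dim(\mu,\la-\Box)=\dim(\mu,\la)$ (classifying standard tableaux of shape $\la/\mu$ by their largest entry) together with the elementary relation $(l-1)^{\down m}/l^{\down m}=(l-m)/l$ for $l=|\la|$, they collect into a single multiple of $\FS_\mu(\la)$. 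The sum over $\la^+$ is the substantial piece: here the content-weighted coefficients $(z+c(\Box))(z'+c(\Box))$ must be reorganized against $\dim(\mu,\la+\Box)$ via a Pieri-type identity for the Young graph, which ultimately produces a combination of $\FS_\mu(\la)$ and $\FS_{\mu\setminus\Box}(\la)$ for $\Box\in\mu^-$, with the coefficients prescribed by \eqref{eq8.E}. Combining all three pieces, the $\FS_\mu(\la)$ contributions should collapse to $-|\mu|\FS_\mu(\la)$ and the remaining terms carry the factor $r(z+c(\Box))(z'+c(\Box))$.

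For \eqref{eq8.F}, apply $\Qz$ termwise to \eqref{eq8.D} using \eqref{eq8.E}. Writing $c_\mu$ for the coefficient of $\FS_\mu$ in $\MM^{(z,z',r)}_\nu$, the coefficient of $\FS_\mu$ in $\Qz\MM^{(z,z',r)}_\nu$ receives two contributions: $-|\mu|c_\mu$ from the diagonal part of \eqref{eq8.E}, and $r(z+c(\Box))(z'+c(\Box))c_{\mu\cup\Box}$ for each $\Box\in\mu^+$ with $\mu\cup\Box\subseteq\nu$. Direct computation of the ratio $c_{\mu\cup\Box}/c_\mu$ from \eqref{eq8.D} yields
$$
r(z+c(\Box))(z'+c(\Box))\,\frac{c_{\mu\cup\Box}}{c_\mu}=-(|\nu|-|\mu|)\,\frac{\dim(\nu/(\mu\cup\Box))}{\dim(\nu/\mu)},
$$
and the ``upward'' branching identity $\sum_\Box\dim(\nu/(\mu\cup\Box))=\dim(\nu/\mu)$ (classifying skew tableaux of shape $\nu/\mu$ by the box holding their smallest entry) collapses the sum to $-(|\nu|-|\mu|)c_\mu$. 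Adding the diagonal contribution gives $-|\nu|c_\mu$, as required.

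The main obstacle is the Pieri-type identity needed for the sum over $\la^+$ in Step 1: translating content-weighted sums over $\la^+$ into ``down'' combinatorics of $\mu$ is not entirely elementary and relies on the characterization of $\FS_\mu$ as a specific shifted-power-sum polynomial in the modified Frobenius coordinates. This identity (or its equivalent operator-algebraic reformulation in the algebra of shifted symmetric functions) is essentially built into the construction of the $Q$-matrix \eqref{eq8.L} in Olshanski \cite{Ols-IMRN12} (see also Olshanski--Regev--Vershik \cite{ORV-Birkh03}), so \eqref{eq8.E} should be obtainable by assembling the relevant identities from those references; once it is established, Step 2 is a short algebraic consequence.
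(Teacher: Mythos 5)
Your proposal is sound, and it is worth noting that it is more detailed than the paper itself, whose entire proof of this proposition is the citation ``See \cite[Section 4.8]{Ols-IMRN12}''. Your Step 2 is a complete and correct derivation of \eqref{eq8.F} from \eqref{eq8.E}: the ratio computation checks out (passing from $c_\mu$ to $c_{\mu\cup\Box}$ in \eqref{eq8.D} picks up a factor $-1/r$ from the sign and the power of $r$, a factor $|\nu|-|\mu|$ from the factorials, the dimension ratio, and the removal of the factor $(z+c(\Box))(z'+c(\Box))$ from the content product), and both branching identities you invoke are correct, including the harmless edge cases ($\mu=\nu$ contributes nothing off-diagonally, and in Step 1 the case $\la=\mu$ is killed by the vanishing of $(l-1)^{\down m}$). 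Your Step 1 bookkeeping is also right: the diagonal and $\la^-$ contributions sum to $-\bigl[m+r(l+m+zz')\bigr]\FS_\mu(\la)$, so that \eqref{eq8.E} becomes equivalent to the single content-sum identity
$$
\frac{(l+1)^{\down m}}{l+1}\sum_{\Box\in\la^+}(z+c(\Box))(z'+c(\Box))\,\frac{\dim(\mu,\la+\Box)}{\dim\la}
=(l+m+zz')\,\FS_\mu(\la)+\sum_{\Box'\in\mu^-}(z+c(\Box'))(z'+c(\Box'))\,\FS_{\mu\setminus\Box'}(\la).
$$
However, be aware that this ``Pieri-type identity'' is not an auxiliary lemma that lightens the load: after your elementary reductions it is exactly \eqref{eq8.E} in disguise (for $\mu=\varnothing$ it degenerates to the identity expressing that $\Qz$ has zero row sums), so Step 1 reduces \eqref{eq8.E} to a statement of the same depth, which you then defer to \cite{Ols-IMRN12} and \cite{ORV-Birkh03} rather than prove. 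Since the paper defers the entire proposition to the same reference, your treatment matches the paper's level of rigor on \eqref{eq8.E} and genuinely improves on it for \eqref{eq8.F}, where your argument from \eqref{eq8.D} is self-contained; to make the whole proof self-contained one would still need to establish the displayed identity, e.g.\ via the shifted-symmetric-function calculus underlying \eqref{eq7.A}.
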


\begin{proof}
See \cite[Section 4.8]{Ols-IMRN12}.
\end{proof}

\begin{proposition}[cf. Proposition \ref{prop6.E}]\label{prop8.E}
For arbitrary $r'>r>0$, we have
$$
Q^{z,z'}_{r'}\,\LYB^{r'}_{r}=\LYB^{r'}_{r}\,Q^{z,z'}_{r}.
$$
\end{proposition}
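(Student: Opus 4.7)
The plan is to mimic the proof of Proposition \ref{prop6.E}, using the Meixner symmetric functions in place of the one-variate Meixner polynomials and the Frobenius--Schur functions in place of the factorial monomials $l^{\down m}$. First I would check that both products $Q^{z,z'}_{r'}\LYB^{r'}_r$ and $\LYB^{r'}_r Q^{z,z'}_r$ are well-defined matrices with finitely many nonzero entries in each row: the link $\LYB^{r'}_r(\la,\mu)$ vanishes unless $\mu\subseteq\la$, and each row of $Q^{z,z'}_r$ has finitely many nonzero entries by \eqref{eq8.L}, so both compositions make sense.

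Next I would show that $\LYB^{r'}_r$ maps Meixner symmetric functions to Meixner symmetric functions in a simple way, namely
\begin{equation*}
\LYB^{r'}_r\,\MM^{(z,z',r)}_\nu=\left(\frac{r}{r'}\right)^{|\nu|}\MM^{(z,z',r')}_\nu, \qquad \nu\in\Y.
\end{equation*}
This follows from the definition \eqref{eq8.D} and the eigen-relation \eqref{eq8.A}: expanding $\MM^{(z,z',r)}_\nu$ in the $\FS$-basis and applying $\LYB^{r'}_r$ term by term, the coefficient of $\FS_\mu$ gets multiplied by $(r/r')^{|\mu|}$, which combined with the factor $r^{|\nu|-|\mu|}$ present in the expansion yields $r^{|\nu|}(r')^{-|\mu|}=(r/r')^{|\nu|}(r')^{|\nu|-|\mu|}$; comparing with the expansion of $\MM^{(z,z',r')}_\nu$ gives the claim.

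From \eqref{eq8.F}, $Q^{z,z'}_{s}\,\MM^{(z,z',s)}_\nu=-|\nu|\MM^{(z,z',s)}_\nu$ for any $s>0$. Combining with the previous step, both operators $Q^{z,z'}_{r'}\LYB^{r'}_r$ and $\LYB^{r'}_r Q^{z,z'}_r$, when applied to $\MM^{(z,z',r)}_\nu$, produce $-|\nu|\,(r/r')^{|\nu|}\,\MM^{(z,z',r')}_\nu$. Since the Meixner symmetric functions, as $\nu$ ranges over $\Y$, form a basis of $\Sym$, the two row-finite matrices agree on the subspace $\Sym\subset\Fun(\Y)$.

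Finally I would promote this agreement to equality of the matrices. Fix $\la\in\Y$; the $\la$-th rows of the two matrices have a common finite support $S\subset\Y$. Because the functions $p_k$, $k\ge1$, already separate points of $\Y$ (via the embedding $\la\mapsto\om_\la$ into the Thoma cone), restrictions of elements of $\Sym$ to $S$ exhaust all real-valued functions on $S$ by a Lagrange-interpolation argument. Hence the equality of the two rows applied to every $F\in\Sym$ forces equality of the rows themselves. The only mildly delicate point is this last separation/interpolation step; the algebraic core of the proof is the eigen-identity for $\LYB^{r'}_r$ on Meixner symmetric functions, which is a direct computation.
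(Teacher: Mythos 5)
Your proposal is correct and follows essentially the same route as the paper's own proof: the key identity $\LYB^{r'}_{r}\,\MM^{(z,z',r)}_\nu=(r/r')^{|\nu|}\,\MM^{(z,z',r')}_\nu$ obtained from \eqref{eq8.D} and \eqref{eq8.A}, combined with the eigenvalue relation \eqref{eq8.F}, gives agreement of the two row-finite matrices on all of $\Sym$, and the conclusion follows because $\Sym$ separates points of $\Y$. Your only addition is to spell out the final step (finite common row support plus a Lagrange-interpolation argument showing $\Sym$ restricts onto all functions on a finite subset of $\Y$), which the paper leaves implicit, and your coefficient bookkeeping $r^{|\nu|-|\mu|}(r/r')^{|\mu|}=(r/r')^{|\nu|}(r')^{|\nu|-|\mu|}$ checks out.
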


\begin{proof}
We literally follow the argument in the proof of Proposition \ref{prop6.E}.
{}From the definition of the Meixner symmetric functions and \eqref{eq8.A} it
is readily seen that
$$
\LYB^{r'}_{r}\,\MM^{(z,z',r)}_\nu
=\left(\frac{r}{r'}\right)^{|\nu|}\,\MM^{(z,z',r')}_\nu
$$
and then \eqref{eq8.F} implies that the both sides of the operator equality in
question give the same result when applied to $\MM^{(z,z',r)}_\nu$. Therefore,
the equality holds on all elements of $\Sym$. As these elements separate points
of $\Y$, this concludes the proof.
\end{proof}

\begin{proposition}[cf. Proposition \ref{prop6.F}]\label{prop8.F}
The matrix  $\Qz$ satisfies the assumptions of Theorem \ref{thm2.1} with
functions $\ga(\la)=\eta(\la)=|\la|+1$, $\la\in\Y$.
\end{proposition}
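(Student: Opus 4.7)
The plan is to verify the hypotheses of Theorem \ref{thm2.1} one by one for the matrix $\Qz$ with $\ga(\la)=\eta(\la)=|\la|+1$. Row and column finiteness of $\Qz$ are automatic from \eqref{eq8.L}, since a nonzero off-diagonal entry requires $\ka=\la\pm\square$, and both $|\la^+|$, $|\la^-|$ are finite. Because each level $\Y_n$ is finite, any sequence escaping every finite subset of the discrete space $\Y$ must satisfy $|\la|\to\infty$, so $\ga,\eta\to+\infty$ at infinity. The bound $-\Qz(\la,\la)\le C\ga(\la)$ is immediate from $-\Qz(\la,\la)=(2r+1)|\la|+rzz'$ together with $zz'>0$ (which follows from Condition \ref{cond1.A}).

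For the two remaining inequalities I would first establish closed forms for the total up- and down-rates out of $\la$. The classical identity $\sum_{\square\in\la^-}\dim(\la-\square)=\dim\la$ (coming from the bijection sending an SYT of shape $\la$ to the pair consisting of its last box and the restriction to the remaining shape) yields
$$
\sum_{\square\in\la^-}\Qz(\la,\la-\square)=(r+1)|\la|,
$$
and the row-sum identity $\sum_\ka\Qz(\la,\ka)=0$ then forces
$$
\sum_{\square\in\la^+}\Qz(\la,\la+\square)=(2r+1)|\la|+rzz'-(r+1)|\la|=r\bigl(|\la|+zz'\bigr).
$$
This is the crucial point: although the individual up-rates involve factors $(z+c(\square))(z'+c(\square))$ that can grow as $|\la|^2$ for corners far from the diagonal, their sum is a linear function of $|\la|$.

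Given these closed forms the remaining two bounds become routine. Noting that $\eta-1=p_1=\FS_{(1)}$ and applying Proposition \ref{prop8.D} with $\mu=(1)$ one gets $\Qz p_1=-p_1+rzz'$; since $\Qz$ annihilates constants, $(\Qz\eta)(\la)=-|\la|+rzz'\le rzz'\cdot\eta(\la)$. For $\Qz(1/\ga)$, substituting the two closed forms into
$$
\bigl(\Qz(1/\ga)\bigr)(\la)=-\frac{\sum_{\square\in\la^+}\Qz(\la,\la+\square)}{(|\la|+1)(|\la|+2)}+\frac{\sum_{\square\in\la^-}\Qz(\la,\la-\square)}{|\la|(|\la|+1)}
$$
(the second term being $0$ at $\la=\varnothing$) and simplifying for $|\la|\ge1$ yields $\bigl(|\la|+2(r+1)-rzz'\bigr)/\bigl((|\la|+1)(|\la|+2)\bigr)$, which is of order $1/|\la|$ and hence bounded by $C/\ga(\la)$ for a suitable constant $C$; the single value at $\la=\varnothing$ is absorbed into $C$. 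The main obstacle is really psychological: one must resist the temptation to control the up-rates corner-by-corner, and instead exploit the row-sum constraint to collapse them into a linear function of $|\la|$; once that observation is made, everything reduces to elementary arithmetic.
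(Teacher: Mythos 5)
Your proof is correct and follows essentially the same route as the paper: the paper's proof observes (citing \cite{BO-PTRF06}) that $\sum_{\Box\in\la^\pm}\Qz(\la,\la\pm\Box)=Q^{(c)}_r(|\la|,|\la|\pm1)$ with $c:=zz'$, so that on functions depending on $\la$ only through $|\la|$ --- such as $\ga=\eta=|\la|+1$ --- the action of $\Qz$ reduces to the one-dimensional Meixner operator, and the claim then follows from Proposition \ref{prop6.F}. Your collapse of the corner-by-corner rates into the totals $(r+1)|\la|$ and $r\bigl(|\la|+zz'\bigr)$ (via the branching identity $\sum_{\Box\in\la^-}\dim(\la-\Box)=\dim\la$ and the row-sum-zero property stated after \eqref{eq8.L}) is exactly this reduction, with the ``easy direct check'' behind Proposition \ref{prop6.F} carried out explicitly.
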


\begin{proof}
As seen from the description of the $Q$-matrix given in \cite[Section
2.5]{BO-PTRF06} (see the sentence just before \cite[Proposition
2.11]{BO-PTRF06}), for any $\la\in\Y$ one has
$$
\sum_{\Box\in\la^\pm}Q^\z_r(\la,\la\pm\Box)=Q^{(c)}_r(|\la|, |\la|\pm1), \qquad
c:=zz'
$$
(note that $c>0$ because of Condition \ref{cond1.A}). This implies that the
action of $\Qz$ preserves the subspace in $\Fun(\Y)$ formed by those functions
in variable $\la\in\Y$ that depend only on $|\la|$, and in that subspace, the
action reduces to that of the difference operator $Q^{(c)}_r$ with $c=zz'$.

Therefore, the claim of the proposition reduces to that of Proposition
\ref{prop6.F}.
\end{proof}

Combining this proposition with Theorem \ref{thm2.1} we get

\begin{corollary}[cf. Corollary \ref{cor6.A}]\label{cor8.A}
{\rm(i)} The $Q$-matrix $\Qz$ gives rise to a Feller semigroup $T^\z_r(t)$ on
$C_0(\Y)$ whose generator $A^\z_r$ is implemented by $\Qz$.

{\rm(ii)} The subspace $C_0(\Y)$ is a core for generator $A^\z_r$.
\end{corollary}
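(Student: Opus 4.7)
The plan is to verify that the $Q$-matrix $\Qz$ satisfies the hypotheses of Theorem \ref{thm2.1}; then parts (ii), (iii), (iv) of that theorem deliver claims (i) and (ii) of the corollary. The setup is already in place: the off-diagonal entries of $\Qz$ are nonnegative (using Condition \ref{cond1.A}), the diagonal entries are finite, and the row-sum identity $-\Qz(\la,\la)=(2r+1)|\la|+rzz'$ holds by construction (as recorded in \cite{BO-PTRF06,Ols-IMRN12}).

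Next, I would check the combinatorial finiteness hypotheses. Each row of $\Qz$ has at most $|\la^+|+|\la^-|+1$ nonzero entries, which is finite because a Young diagram has only finitely many addable and removable boxes. Column finiteness is nearly as immediate: if $\ka$ is fixed and $\Qz(\la,\ka)\neq 0$ with $\la\ne\ka$, then $\la=\ka-\Box$ or $\la=\ka+\Box$, restricting $\la$ to a finite set of diagrams. Also, since $\Y_n$ is finite for every $n$, the function $\gamma(\la)=\eta(\la)=|\la|+1$ is strictly positive and its sublevel sets $\{\la:|\la|+1\le N\}$ are finite, so it tends to $+\infty$ at infinity in the sense required by Theorem \ref{thm2.1}.

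The three analytic inequalities follow from Proposition \ref{prop8.F} by the reduction already used there: $\Qz$ preserves the subspace of $\Fun(\Y)$ consisting of functions depending only on $|\la|$, and on this subspace its action coincides with that of the one-dimensional Meixner operator $Q^{(c)}_r$ with $c=zz'$. Since the candidates $\gamma(\la)=\eta(\la)=|\la|+1$ and the diagonal $-\Qz(\la,\la)=(2r+1)|\la|+rzz'$ all depend only on $|\la|$, the three pointwise bounds for $\Qz$ are literally the same as those for $Q^{(c)}_r$ applied to $k+1$ on $\Z_+$, and those were verified in Proposition \ref{prop6.F}.

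With all hypotheses of Theorem \ref{thm2.1} in place, part (ii) produces the Feller semigroup $T^\z_r(t)$ on $C_0(\Y)$; part (iii) identifies its generator $A^\z_r$ with the restriction of $\Qz$ to the set of $f\in C_0(\Y)$ with $\Qz f\in C_0(\Y)$, proving (i); and part (iv) asserts that the subspace of finitely supported functions $C_c(\Y)\subseteq C_0(\Y)$ is a core, giving (ii). There is no real obstacle here: the analytic heavy lifting was done in Theorem \ref{thm2.1}, and the $|\la|$-reduction argument of Proposition \ref{prop8.F} converts everything that might look combinatorially intricate on $\Y$ into a one-variable check on $\Z_+$.
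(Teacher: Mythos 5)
Your proof is correct and takes essentially the same route as the paper: the paper obtains the corollary by combining Proposition \ref{prop8.F} (whose proof is precisely your reduction, via the row-sum identity $\sum_{\Box\in\la^\pm}\Qz(\la,\la\pm\Box)=Q^{(c)}_r(|\la|,|\la|\pm1)$ with $c=zz'$, to the one-dimensional check of Proposition \ref{prop6.F}) with Theorem \ref{thm2.1}, and your explicit verification of the row/column finiteness and of $\ga=\eta=|\la|+1$ tending to infinity just spells out what the paper leaves implicit. Note only that claim (ii) should read $C_c(\Y)$ (finitely supported functions) rather than $C_0(\Y)$ --- an apparent typo in the statement, consistent with part (iv) of Theorem \ref{thm2.1} and with how the corollary is later used, and your proof correctly delivers the $C_c(\Y)$ version.
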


\begin{proposition}[cf. Proposition \ref{prop6.K}]\label{prop8.K}
For every couple $(z,z')$ of parameters subject to Condition \ref{cond1.A}
there exists a unique Feller semigroup $T^\z(t)$ such that for every $r>0$,
$T^\z(t)$ is consistent with the $T^\z_r(t)$, $r>0$, in the sense that
\begin{equation*}
T^\z(t)\,\LYB^{\infty}_{r}=\LYB^{\infty}_{r}\,T^\z_r(t),\qquad t\ge 0, \quad
r>0.
\end{equation*}
\end{proposition}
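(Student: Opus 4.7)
The plan is to imitate verbatim the proof of the toy-case Proposition \ref{prop6.K}, with the binomial system $\mathbb B$ replaced by the Young-bouquet system $\YB$ and the Meixner $Q$-matrix $\Qc_r$ replaced by $Q^\z_r$. All the necessary ingredients have been prepared above: the Feller character of the projective system $\YB$ (Proposition \ref{prop8.C}), the existence of the Feller semigroup $T^\z_r(t)$ on $C_0(\Y)$ with pre-generator $Q^\z_r$ (Corollary \ref{cor8.A}), and the $Q$-matrix intertwining identity $Q^\z_{r'}\LYB^{r'}_r=\LYB^{r'}_r Q^\z_r$ (Proposition \ref{prop8.E}).

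The first step is to lift the $Q$-matrix intertwining to the semigroup level by invoking Proposition \ref{prop5.C}. Its hypotheses are met: $E'=E=\Y$ is countably infinite; the link $\LYB^{r'}_r$ is stochastic and Feller (a consequence of Proposition \ref{prop8.C}), and each of its rows is finitely supported because $\LYB^{r'}_r(\la,\mu)$ vanishes unless $\mu\subseteq\la$; each of the $Q$-matrices $Q^\z_{r'}$ and $Q^\z_r$ satisfies the assumptions of Theorem \ref{thm2.1} by Proposition \ref{prop8.F}; and the required matrix identity is Proposition \ref{prop8.E}. Thus Proposition \ref{prop5.C} delivers
$$
T^\z_{r'}(t)\,\LYB^{r'}_r=\LYB^{r'}_r\,T^\z_r(t),\qquad t\ge 0,\quad r'>r>0.
$$

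The second step is to apply Proposition \ref{prop5.B} to the family $\{T^\z_r(t):r>0\}$ of mutually intertwined Feller semigroups sitting on the Feller projective system $\YB$, whose boundary is the Thoma cone $\wt\Om$. That proposition produces a unique Feller semigroup $T^\z(t)$ on $C_0(\wt\Om)$ such that $T^\z(t)\,\LYB^\infty_r=\LYB^\infty_r\,T^\z_r(t)$ for every $r>0$ and every $t\ge 0$, which is precisely the conclusion sought.

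There is no genuine obstacle in this argument: everything follows from the abstract machinery developed in Sections \ref{sect4} and \ref{sect5} once the concrete preparations of Propositions \ref{prop8.C}, \ref{prop8.E}, and \ref{prop8.F} are in hand. The only minor point worth recording is that Theorem \ref{thm2.1} requires column-finiteness of the $Q$-matrix in addition to row-finiteness; for $Q^\z_r$ this is immediate from \eqref{eq8.L}, since fixing $\mu\in\Y$ leaves only finitely many $\la$ that differ from $\mu$ by the addition or removal of a single box. Thus Proposition \ref{prop5.C} applies at each level, the limit transition of Proposition \ref{prop5.B} is justified, and the asserted Feller semigroup $T^\z(t)$ is obtained.
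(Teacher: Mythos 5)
Your proof is correct and takes essentially the same route as the paper's own argument, which likewise cites Proposition \ref{prop8.E} for the $Q$-matrix intertwining, lifts it to the semigroup level via Proposition \ref{prop5.C}, and then obtains the unique boundary Feller semigroup from Proposition \ref{prop5.B}. Your explicit verification of the hypotheses (row-finiteness of $\LYB^{r'}_r$ from $\mu\subseteq\la$, column-finiteness of $Q^\z_r$ from the single-box structure of \eqref{eq8.L}) is accurate and simply spells out details the paper leaves implicit.
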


\begin{proof}
The argument is exactly the same as in Proposition \ref{prop6.K}: We know that
the $Q$-matrices with various values of parameter $r$ are consistent with the
links (Proposition \ref{prop8.E}). It follows, by virtue of Proposition
\ref{prop5.C}, that the semigroups are consistent with the links, too.
Therefore, we may apply Proposition \ref{prop5.B}, which gives the desired
result.
\end{proof}

\begin{definition}\label{def8.A}
For $r>0$, we denote by $X^\z_r$ the Feller Markov process on $\Y$ determined
by the semigroup $T^\z_r(t)$. Likewise, we denote by $X^\z$ the Feller Markov
process on $\wt\Om$ determined by the semigroup $T^\z(t)$.
\end{definition}

\subsection{A family of cores for Markov semigroup generators}

The following two claims are used in the proposition below.

First, let $\om$ range over the Thoma cone $\wt\Om$ and $\mu$ range over $\Y$.
For any fixed $q\in(0,1)$, the functions $q^{|\om|} S_\mu(\om)$ span a dense
subspace in $C_0(\wt\Om)$, see \cite[Corollary 3.4.6]{BO-MMJ13}.

Second, let $\la$ range over $\Y$. Recall that in Section \ref{sect7.A} we
defined an embedding $\Y\hookrightarrow\wt\Om$ via the map $\la\mapsto
\om_\la$. Observe that $|\la|=|\om_\la|$; this implies that a sequence
$\{\la\}$ of diagrams goes to infinity in the discrete set $\Y$ if and only if
its image $\{\om_\la\}$ goes to infinity in the locally compact space $\wt\Om$.
Combining this with the first claim we conclude that for any fixed $q\in(0,1)$,
the functions $q^{|\la|} F_\mu(\la)$ span a dense subspace in $C_0(\Y)$.

\begin{proposition}[cf. Proposition \ref{prop6.G}]\label{prop8.G}
{\rm(i)} For any $r'>r>0$, the operator $\LYB^{r'}_{r}: C_0(\Y)\to C_0(\Y)$ has
a dense range.

{\rm(ii)} Likewise, for any $r>0$, the operator $\LYB^\infty_r: C_0(\Y)\to
C_0(\wt\Om)$ has a dense range.
\end{proposition}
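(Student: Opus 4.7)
The plan is to follow word-for-word the proof of Proposition \ref{prop6.G}, replacing the binomial/Poisson transformation formulas for $q^l\,l^m$ by the parallel formulas \eqref{eq8.C} and \eqref{eq8.I} for $q^{|\la|}\FS_\mu(\la)$. The two ingredients are already in place just above the statement: (a) for any fixed $q\in(0,1)$, the family $\{q^{|\om|}S_\mu(\om):\mu\in\Y\}$ spans a dense subspace of $C_0(\wt\Om)$ (cited from \cite[Corollary 3.4.6]{BO-MMJ13}), and, via the embedding $\la\mapsto\om_\la$ preserving the size function, (b) the family $\{q^{|\la|}\FS_\mu(\la):\mu\in\Y\}$ spans a dense subspace of $C_0(\Y)$.

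For part (i), I fix $q\in(0,1)$ and consider the linear span $V_q\subset C_0(\Y)$ of the functions $\la\mapsto q^{|\la|}\FS_\mu(\la)$ as $\mu$ runs over $\Y$. By formula \eqref{eq8.C}, the operator $\LYB^{r'}_r$ sends the basis element $q^{|\la|}\FS_\mu(\la)$ to the nonzero scalar multiple $(qr/(q'r'))^{|\mu|}\,(q')^{|\la|}\FS_\mu(\la)$, where $q':=1-(1-q)r/r'\in(0,1)$. Hence $\LYB^{r'}_r(V_q)=V_{q'}$, and since $V_{q'}$ is dense in $C_0(\Y)$ by (b) applied with parameter $q'$, the range of $\LYB^{r'}_r$ is dense in $C_0(\Y)$.

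For part (ii), I argue the same way using formula \eqref{eq8.I}: for fixed $q\in(0,1)$ the operator $\LYB^\infty_r$ takes the basis element $q^{|\la|}\FS_\mu(\la)$ to the nonzero scalar multiple $q^{|\mu|}r^{|\mu|}\,q_\infty^{|\om|}S_\mu(\om)$ with $q_\infty:=e^{-(1-q)r}\in(0,1)$. So $\LYB^\infty_r$ carries $V_q\subset C_0(\Y)$ onto the linear span of $\{q_\infty^{|\om|}S_\mu(\om):\mu\in\Y\}\subset C_0(\wt\Om)$, which is dense by (a) applied with parameter $q_\infty$. Therefore the range of $\LYB^\infty_r$ is dense in $C_0(\wt\Om)$.

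There is essentially no obstacle here: the only nontrivial inputs are the two density statements cited from \cite{BO-MMJ13} and the explicit transformation formulas \eqref{eq8.C} and \eqref{eq8.I} established in Propositions \ref{prop8.A} and \ref{prop8.B}. The mild point to verify is that each scalar prefactor is nonzero, so that $\LYB^{r'}_r$ (respectively $\LYB^\infty_r$) really is a bijection between the two spans when restricted level by level in $|\mu|$; but this is obvious from the formulas, since $r,r',q,q'>0$. Thus the proof reduces to two citations plus one line of symbol-matching, exactly as in the toy case.
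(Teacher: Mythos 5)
Your proof is correct and is essentially identical to the paper's own: the paper likewise fixes $q\in(0,1)$, uses \eqref{eq8.C} (resp.\ \eqref{eq8.I}) to see that the link carries the span of the functions $q^{|\la|}\FS_\mu(\la)$ onto the span of $(q')^{|\la|}\FS_\mu(\la)$ (resp.\ of $q_\infty^{|\om|}S_\mu(\om)$), and then invokes the same two density facts from \cite{BO-MMJ13}. The only cosmetic difference is that the paper's wording uses $S_\mu$ where the formulas involve $\FS_\mu$, which is immaterial since both families are bases of $\Sym$ and hence generate the same span for fixed $q$.
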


\begin{proof}
(i) Fix an arbitrary $q\in(0,1)$ and let $\mu$ range over $\Y$. By
\eqref{eq8.C}, $\LYB^{r'}_{r}$ maps the linear span of functions $q^{|\la|}
S_\mu(\la)$ onto the linear span of functions $(q')^{|\la|} S_\mu(\la)$ with
some other $q'\in(0,1)$. Since these spans are dense, we get the desired claim.

(ii) The same argument, with reference to \eqref{eq8.I}.
\end{proof}

Denote by $\Az_r$ and $\Az$ the generators of the semigroups $\Tz_r(t)$ and
$\Tz(t)$, respectively.

\begin{proposition}[cf. Proposition \ref{prop6.H}]\label{prop8.H}
Fix an arbitrary number $q\in(0,1)$ and let $\mu$ range over $\Y$.

{\rm(i)} For every $r>0$, the linear span of functions $q^{|\la|} S_\mu(\la)$,
where argument $\la$ ranges over\/ $\Y$, is a core for $\Az_r$.

{\rm(ii)} Likewise, the linear span of functions $q^{|\om|} S_\mu(\om)$, where
argument $\om$ ranges over\/ $\wt\Om$, is a core for $\Az$.
\end{proposition}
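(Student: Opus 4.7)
The plan is to transcribe the two-step argument of Proposition \ref{prop6.H} into the Young-bouquet setting. The required inputs are already in place: by Corollary \ref{cor8.A} together with Theorem \ref{thm2.1}(iv), the space $C_c(\Y)$ of finitely supported functions is a core for every generator $\Az_r$; by Proposition \ref{prop8.G} the link operators $\LYB^{r'}_r: C_0(\Y)\to C_0(\Y)$ and $\LYB^\infty_r: C_0(\Y)\to C_0(\wt\Om)$ have dense range; and by Proposition \ref{prop8.K} (in its infinitesimal form) these links intertwine the corresponding generators.

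The backbone is a generic transport-of-cores principle, to be proved by the Hille--Yosida argument used in Proposition \ref{prop5.A}: if $\F\subseteq\dom(\Az_r)$ is a core for $\Az_r$ and $r'>r$, then $\LYB^{r'}_r\F$ is a core for $\Az_{r'}$, and $\LYB^\infty_r\F$ is a core for $\Az$. Indeed, the intertwining allows one to rewrite $(c-\Az_{r'})(\LYB^{r'}_r\F)$ as $\LYB^{r'}_r((c-\Az_r)\F)$; density of this set follows by combining the density of $(c-\Az_r)\F$ in $C_0(\Y)$ with the dense range of the link (a contraction of dense range sends dense subsets to dense subsets), and similarly at the boundary level for $\LYB^\infty_r$.

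For part (i), I apply this principle with $r_1:=(1-q)r$ and $r_2:=r$, taking $\F=C_c(\Y)$. Formula \eqref{eq8.B} with $q'=1-r_1/r_2=q$ identifies $\LYB^{r_2}_{r_1}((\dim\mu)^{-1}\mathbf1_\mu)$ as a nonzero scalar multiple of $q^{|\la|}\FS_\mu(\la)$, so the image $\LYB^{r_2}_{r_1}(C_c(\Y))$ is the linear span of $\{q^{|\la|}\FS_\mu(\la):\mu\in\Y\}$; by the unitriangularity of the transition between $\{\FS_\mu\}$ and $\{S_\mu\}$ in $\Sym$, this span coincides with the linear span of $\{q^{|\la|}S_\mu(\la)\}$, which is therefore a core for $\Az_r$. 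For part (ii), I choose $r:=-\log q$ so that $q_\infty=e^{-r}=q$ in \eqref{eq8.H}, and apply the boundary transport principle to $\F=C_c(\Y)$; formula \eqref{eq8.H} then identifies $\LYB^\infty_r(C_c(\Y))$ with the linear span of $\{q^{|\om|}S_\mu(\om):\mu\in\Y\}$, which is thus a core for $\Az$. I do not foresee any real obstacle: the proof is a literal transcription of that of Proposition \ref{prop6.H}, with every needed ingredient already verified in the preceding subsections, and the only mildly nontrivial point --- passage from $\FS_\mu$ to $S_\mu$ in part (i) --- is immediate from the unitriangular change-of-basis in $\Sym$.
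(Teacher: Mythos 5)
Your proof is correct and follows the paper's own argument essentially verbatim: the same transport-of-cores principle (the Hille--Yosida criterion from the proof of Proposition \ref{prop5.A}, fed by the dense-range statements of Proposition \ref{prop8.G} and the intertwining established in the proof of Proposition \ref{prop8.K}), with the same parameter choices $r_1=(1-q)r$, $r_2=r$ for part (i) and $r=-\log q$ for part (ii), and the same identifications via \eqref{eq8.B} and \eqref{eq8.H}. If anything, you are slightly more careful than the paper: your explicit unitriangular change of basis between $\{\FS_\mu\}$ and $\{S_\mu\}$ in part (i) fills a step the paper leaves implicit (its \eqref{eq8.B} yields $q^{|\la|}\FS_\mu(\la)$, not $q^{|\la|}S_\mu(\la)$), and your use of $C_c(\Y)$ corrects the paper's apparent typo $C_0(\Y)$ in part (ii).
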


\begin{proof}
(i) Observe that if $r_2>r_1>0$ and $\F_1$ is a core for $\Az_{r_1}$, then
$\F_2:=\LYB^{r_2}_{r_1}\F_1$ is a core for $\Az_{r_2}$. Indeed, by virtue of
claim (i) of Proposition \ref{prop8.G}, we may apply the argument of
Proposition \ref{prop5.A}.

Now take $r_2=r$ and $r_1=(1-q)r$. Then, as is seen from \eqref{eq8.B}, the
linear span of functions $q^{|\la|}S_\mu(\la)$ is just the image under
$\LYB^{r_2}_{r_1}$ of the space $C_c(\Y)$. By virtue of Proposition
\ref{prop8.F} and claim (iv) of Theorem \ref{thm2.1}, $C_c(\Y)$ is a core for
$\Az_{r_1}$. Therefore, its image is a core for $\Az_{r_2}$.

(ii) We argue as above. First, application of claim (ii) of Proposition
\ref{prop8.G} allows us to conclude that if $\F\subset C_0(\Y)$ is a core for
$\Az_r$ for some $r>0$, then $\LYB^\infty_r\F$ is a core for $\Az$.

Next, given $q\in(0,1)$ we take $r=-\log q$ and $\F=C_0(\Y)$. As pointed above,
$\F$ is a core for $\Az_r$. On the other hand, \eqref{eq8.H} shows that the
linear span of functions $q^{|\om|} S_\mu(\om)$ coincides with $\LYB^\infty_r
\F$.
\end{proof}

\subsection{The infinite-variate Laguerre differential operator}\label{sect8.A}
Following \cite[Theorem 4.10]{Ols-IMRN12}, we introduce the following partial
differential operator in countably many formal variables $e_1,e_2,\dots$:
\begin{equation}\label{eq8.J}
\begin{aligned}
\Dz&=\sum_{n\ge1}\left(\sum_{k=0}^{n-1}(2n-1-2k)e_{2n-1-k}e_k\right)
\frac{\pd^2}{\pd e_n^2}\\
&+2\sum_{n'>n\ge1}\left(\sum_{k=0}^{n-1}(n'+n-1-2k)e_{n'+n-1-k}e_k\right)
\frac{\pd^2}{\pd e_{n'}\pd e_n}\\
&+\sum_{n=1}^\infty\big(-ne_n+(z-n+1)(z'-n+1)e_{n-1}\big)\frac{\pd}{\pd e_n}
\end{aligned}
\end{equation}
with the agreement that $e_0=1$. We call it the {\it infinite-variate Laguerre
differential operator\/}.

Since all coefficients of $\Dz$ are given by finite sums, $\Dz$ is applicable
to any polynomial in $e_1,e_2,\dots$. This means that it is well defined on
$\Sym$ provided that we interpret our formal variables as the elementary
symmetric functions (here we use the fact that $\{e_1,e_2,\dots\}$ is a system
of algebraically independent generators of $\Sym$). But $\Dz$ is also
applicable to more general cylinder functions, in particular, to the functions
of the form $q^{e_1} F$, where $q\in(0,1)$ and $F\in\Sym$. Note that
$e_1(\om)=|\om|$, so that these are just the functions considered in claim (ii)
of Proposition \ref{prop8.H}. By virtue of this claim, for any fixed
$q=e^{-r}\in(0,1)$, the functions of the form $q^{e_1} F$ with $F\in\Sym$ enter
the domain of the generator $\Az$, and $\Az$ is uniquely determined by its
action on these functions.

\begin{proposition}[cf. Proposition \ref{prop6.I}]\label{prop8.I}
For any $r>0$, the action of the generator $\Az$ on the functions of the form
$\exp(-r e_1)F$ with $F$ ranging over the algebra $\Sym=\R[e_1,e_2,\dots]$ is
implemented by the infinite-variate Laguerre differential operator $\Dz$
defined by \eqref{eq8.J}.
\end{proposition}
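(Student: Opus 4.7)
The strategy is to reduce the identity $\Az f = \Dz f$ for $f$ of the form $e^{-re_1}F$ to an $r$-free identity purely in $\Sym$, and then to verify that identity. By Proposition \ref{prop8.H}(ii) with $q=e^{-r}$, the linear span of the functions $e^{-r|\om|}S_\mu(\om)$, $\mu\in\Y$, is a core for $\Az$. Since $\{S_\mu\}$ is a linear basis of $\Sym$, by linearity it suffices to prove $\Az(e^{-re_1}S_\mu) = \Dz(e^{-re_1}S_\mu)$ for every single partition $\mu$.

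For the left-hand side, I use the infinitesimal intertwining $\Az\LYB^\infty_r = \LYB^\infty_r A^\z_r$, which is the infinitesimal form of Proposition \ref{prop8.K} obtained via \eqref{eq5.C}; on $C_c(\Y)$ the generator $A^\z_r$ coincides with $\Qz$. Formula \eqref{eq8.H} identifies $e^{-re_1}S_\mu(\om)$ with a scalar multiple of $\LYB^\infty_r\mathbf 1_\mu$, so the computation reduces to evaluating $\Qz\mathbf 1_\mu$ on $\Y$ and then applying the link termwise. Reading off the $\mu$-th column of $\Qz$ from \eqref{eq8.L} shows that $\Qz\mathbf 1_\mu$ is supported on $\{\mu\}\cup\{\mu\pm\square\}$; after inserting \eqref{eq8.H} and simplifying the dimension ratios, the outcome is
\begin{equation*}
\Az(e^{-re_1}S_\mu) = e^{-re_1}\Bigl(-\bigl[(2r+1)|\mu|+rzz'\bigr]S_\mu + \sum_{\square\in\mu^-}(z+c(\square))(z'+c(\square))\,S_{\mu-\square} + r(r+1)\sum_{\square\in\mu^+}S_{\mu+\square}\Bigr).
\end{equation*}

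For the right-hand side I apply the Leibniz rule to the differential operator \eqref{eq8.J}. Since $e^{-re_1}$ is a function of $e_1$ alone, only the coefficients of $\pd^2/\pd e_1^2$, of $\pd^2/(\pd e_1\,\pd e_{n'})$ for $n'\ge 2$, and of $\pd/\pd e_1$ contribute extra terms; these coefficients are $e_1$, $n'e_{n'}$ and $-e_1+zz'$ respectively. A short calculation yields
\begin{equation*}
\Dz(e^{-re_1}F) = e^{-re_1}\bigl(\Dz F - 2rEF + r(r+1)\,e_1 F - rzz'\,F\bigr), \qquad F\in\Sym,
\end{equation*}
where $E := \sum_{n\ge1} n e_n\,\pd/\pd e_n$ is the Euler operator with respect to the standard grading (so $ES_\mu = |\mu|\,S_\mu$). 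Invoking the Pieri rule $e_1 S_\mu = \sum_{\square\in\mu^+}S_{\mu+\square}$, the desired equality $\Az(e^{-re_1}S_\mu) = \Dz(e^{-re_1}S_\mu)$ reduces, after cancellation, to the $r$-free identity
\begin{equation*}
\Dz S_\mu = -|\mu|\,S_\mu + \sum_{\square\in\mu^-}(z+c(\square))(z'+c(\square))\,S_{\mu-\square} \qquad\text{in } \Sym.
\end{equation*}

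The main obstacle is establishing this last identity, which is a purely algebraic statement about the action of $\Dz$ on a Schur function. It is structurally parallel to the Frobenius--Schur formula \eqref{eq8.E} for $\Qz$, and one can pass between the two formally using \eqref{eq8.G}; making that limit rigorous would however require extending $\LYB^\infty_r$ beyond $C_0(\Y)$. The cleanest rigorous route is a direct verification: expand $S_\mu$ as a polynomial in $e_1,e_2,\dots$ via Jacobi--Trudi, apply the explicit operator \eqref{eq8.J}, and regroup using standard symmetric-function identities. This verification is essentially the one carried out in \cite{Ols-IMRN12} as part of the proof that the Laguerre symmetric functions $\LL^\z_\nu$ diagonalize $\Dz$ (see \eqref{eq1.B}); equivalently, the displayed identity follows from \eqref{eq1.B} together with the transition matrix between the Schur and Laguerre bases of $\Sym$.
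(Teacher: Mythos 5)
Your proposal is correct and follows essentially the same route as the paper's own proof: the intertwining relation $\Az\LYB^\infty_r=\LYB^\infty_r A^\z_r$ reduces the left-hand side to $\Qz\mathbf 1_\mu$ read off from \eqref{eq8.L}, while the right-hand side is expanded by the Leibniz rule into exactly the paper's three expressions (the $e_1$-part acting on $e^{-re_1}$ plus Pieri, the cross terms collapsing to $-2r$ times the Euler operator, and $e^{-re_1}\Dz S_\mu$), with your intermediate formulas matching \eqref{eq8.M} and \eqref{eq8.N} after the scalar normalization $r^m/m!$. The key identity $\Dz S_\mu=-|\mu|S_\mu+\sum_{\square\in\mu^-}(z+c(\square))(z'+c(\square))S_{\mu-\square}$, which you correctly isolate as the one nontrivial input, is precisely what the paper imports from \cite[Theorem 4.1 and Definition 4.7]{Ols-IMRN12} rather than re-deriving.
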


\begin{proof}
Let $\mu$ range over $\Y$ and $m:=|\mu|$. Recall that $\mathbf1_\mu$ denotes
the delta function on $\Y$ concentrated at the point $\mu$. We also set
\begin{equation*}
\wt{\mathbf1}_\mu=(\dim\mu)^{-1}\mathbf1_\mu
\end{equation*}
and
$$
f_\mu=\frac{r^m}{m!}\,\exp(-re_1)\,S_\mu.
$$
By virtue of \eqref{eq8.H},
$$
\LYB^\infty_r\wt{\mathbf1}_\mu=f_\mu.
$$
Recall also that $\mu^+$ and $\mu^-$ denote the sets of boxes that can be
appended to or removed from $\mu$, respectively.

We are going to prove the following analogs of formulas \eqref{eq6.A} and
\eqref{eq6.B}:
\begin{multline}\label{eq8.M}
\Qc_r\wt{\mathbf1}_\mu=-[(2r+1)m+rzz']\wt{\mathbf1}_\mu
+(r+1)(m+1)\sum_{\Box\in\mu^+}\wt{\mathbf1}_{\mu+\Box}\\
+\frac{r}{m}\sum_{\Box\in\mu^-}(z+c(\Box))(z'+c(\Box))\wt{\mathbf1}_{\mu-\Box}
\end{multline}
and
\begin{multline}\label{eq8.N}
\Dz f_\mu=-[(2r+1)m+rzz']f_\mu
+(r+1)(m+1)\sum_{\Box\in\mu^+}f_{\mu+\Box}\\
+\frac{r}{m}\sum_{\Box\in\mu^-}(z+c(\Box))(z'+c(\Box))f_{\mu-\Box}
\end{multline}
(for the empty diagram $\mu$, the set $\mu^-$ is empty and the corresponding
sum disappears). These formulas show that the operator $\Qz$ acts on the
functions $\wt{\mathbf1}_\mu$ in exactly the same way as the operator $\Dz$
acts on the functions $f_\mu$, which implies the claim of the proposition.

The proof of \eqref{eq8.M} is trivial: this formula directly follows from the
very definition of $\Qz$, see \eqref{eq8.L}.

The proof of \eqref{eq8.N} is a bit more complicated. Observe that if a second
order partial differential operator $D$, symbolically written as
$$
D=\sum_{i,i}c_{ij}\pd_i\pd_j+\textrm{first order terms},
$$
is applied to a product of two functions, $GF$, then the result can be written
as the sum of three expressions:
\begin{equation}\label{eq8.W}
D(GF)=\underbrace{(DF)G}_{1}+\underbrace{G(DF)}_{2}+\underbrace{\sum_{i,j}c_{ij}[(\pd_i
G)(\pd_j F)+(\pd_j G)(\pd_i F)]}_{3}.
\end{equation}

Let us apply this general formula to
$$
D:=\Dz, \quad G:=\frac{r^m}{m!}e^{-re_1}, \quad F:=S_\mu
$$
and examine the corresponding three expressions arising from \eqref{eq8.W}.

1. The first expression is equal to
\begin{gather*}
\frac{r^m}{m!}\left(\Dz e^{-re_1}\right)S_\mu
=\frac{r^m}{m!}\left\{\left[e_1\frac{d^2}{de_1^2}
+(-e_1+zz')\frac{d}{de_1}\right]e^{-re_1}\right\}S_\mu\\
=\frac{r^m}{m!}(r^2+r)e^{-re_1}e_1S_\mu-\frac{r^m}{m!}rzz'e^{-re_1}S_\mu.
\end{gather*}
It is well known that
$$
e_1S_\mu=\sum_{\Box\in\mu^+}S_{\mu+\Box}.
$$
It follows that the first expression in question is equal to
\begin{multline}\label{eq8.W1}
(r+1)(m+1)\frac{r^{m+1}}{(m+1)!}e^{-re_1}\sum_{\Box\in\mu^+}S_{\mu+\Box}
-rzz'\frac{r^m}{m!}e^{-re_1}S_\mu\\
=(r+1)(m+1)\sum_{\Box\in\mu^+}f_{\mu+\Box}-rzz'f_\mu.
\end{multline}

2. The second expression in \eqref{eq8.W} takes the form
$$
\frac{r^m}{m!}e^{-re_1}(\Dz S_\mu).
$$
It follows from \cite[Theorem 4.1 and Definition 4.7]{Ols-IMRN12} that
$$
\Dz S_\mu=-m S_\mu+\sum_{\Box\in\mu^-}(z+c(\Box))(z'+c(\Box))S_{\mu-\Box},
\qquad m=|\mu|.
$$
This implies that the second expression is equal to
\begin{equation}\label{eq8.W2}
-mf_\mu+\frac{r}{m}\sum_{\Box\in\mu^-}(z+c(\Box))(z'+c(\Box))f_{\mu-\Box}.
\end{equation}

3. The only relevant part of our differential operator $D=\Dz$ that contributes
to the third expression in \eqref{eq8.W} is
$$
e_1\frac{\pd^2}{\pd e_1^2}+2\sum_{n'>1}n'e_{n'}\frac{\pd^2}{\pd e_{n'}\pd e_1},
$$
because the remaining terms in $\Dz$ are either of the first order or do not
contain the partial derivative in variable $e_1$ while our function $G$ depends
on $e_1$ only. It follows that the third expression has the form
\begin{multline*}
2\frac{r^m}{m!}\left(\frac{d}{de_1}e^{-re_1}\right)e_1\frac{\pd}{\pd e_1}S_\mu
+2\frac{r^m}{m!}\left(\frac{d}{de_1}e^{-re_1}\right)\sum_{n'>1}n'e_{n'}\frac{\pd}{\pd
e_{n'}}S_\mu\\
=-2r\frac{r^m}{m!}e^{-re_1}\sum_{n\ge1}ne_n\frac{\pd}{\pd e_n}S_\mu.
\end{multline*}

Observe that the operator
$$
\sum_{n\ge1}ne_n\frac{\pd }{\pd e_n}
$$
is the ``Euler operator''; its action on the homogeneous function $S_\mu$
amounts to multiplication by its degree $m$. Using this fact we see that the
third expression is equal to
\begin{equation}\label{eq8.W3}
-2rmf_\mu.
\end{equation}

Finally, summing up \eqref{eq8.W1}, \eqref{eq8.W2}, and \eqref{eq8.W3} we get
the desired formula \eqref{eq8.N}

\end{proof}

The {\it Laguerre symmetric functions\/}, introduced in Olshanski
\cite{Ols-IMRN12}, are elements of $\Sym$ depending on parameters $z$ and $z'$,
and indexed by Young diagrams $\nu\in\Y$:
\begin{equation}\label{eq8.O}
\LL^\z_\nu=\sum_{\mu:\,\mu\subseteq\nu} (-1)^{|\nu|-|\mu|}
\frac{\dim\nu/\mu}{(|\nu|-|\mu|)!}\, (z)_{\nu/\mu}(z')_{\nu/\mu} S_\mu.
\end{equation}
As shown in \cite{Ols-IMRN12}, they form a basis in $\Sym$ diagonalizing
operator $\Dz$:
\begin{equation}\label{eq8.P}
\Dz\LL^\z_\nu=-|\nu|\LL^\z_\nu, \qquad \nu\in\Y.
\end{equation}

The above formula is similar to \eqref{eq6.7}, and the next formula is an
analog of \eqref{eq6.14}:
\begin{equation}\label{eq8.Q}
\LYB^\infty_r \, \MM^{(z,z',r)}_\nu=r^{|\nu|}\LL^\z_\nu, \qquad r>0, \quad
\nu\in\Y.
\end{equation}
The proof of \eqref{eq8.Q} is easy and analogous to that of \eqref{eq6.14}.
Namely, we compare the expansions of the Meixner and Laguerre symmetric
functions in the bases $\{\FS_\mu\}$ and $\{S_\mu\}$, respectively (see
\eqref{eq8.D} and \eqref{eq8.O}), and then apply \eqref{eq8.G}, which says that
$\LYB^\infty_r$ takes $\FS_\mu$ to $r^{|\mu|}S_\mu$.

\subsection{Approximation}
Recall that in Section \ref{sect7.A} we introduced an embedding
$\la\mapsto\om_\la$ of the set $\Y$ into the Thoma cone $\wt\Om$. Now let us
introduce a family of embeddings $\varphi_r:\Y\hookrightarrow\wt\Om$ depending
on parameter $r>0$:
$$
\varphi_r(\la)=r^{-1}\om_\la, \qquad \la\in\Y,
$$
where multiplication by constant factor $r^{-1}$ in the right-hand side means
that all coordinates of $\om_\la$ are multiplied by that constant --- a natural
homothety on the cone. Obviously, $\varphi_1$ is the map $\la\mapsto\om_\la$.

The latter map should be viewed as a counterpart of the inclusion map
$\Z_+\hookrightarrow\R_+$, while $\varphi_r$ is a counterpart of the scaled
embedding $\Z_+\ni l\mapsto r^{-1}l\in\R_+$.

Note that $\varphi_r(\Y)$ is a discrete, locally finite subset of $\wt\Om$.
Therefore, we may define the projection $\pi_r: C_0(\wt\Om)\to C_0(\Y)$ as in
Section \ref{sect5.A}:
$$
(\pi_rf)(\la)=f(\varphi_r(\la)), \qquad \la\in\Y.
$$
It is used in the proposition below to define the approximation procedure.

Recall that in Corollary \ref{cor8.A} and Proposition \ref{prop8.K} we defined
Feller semigroups $T^\z_r(t)$ and $T^\z(t)$ acting on the Banach spaces
$C_0(\Y)$ and $C_0(\wt\Om)$, respectively.

\begin{proposition}[cf. Proposition \ref{prop6.J}]\label{prop8.J}
Let $(z,z')$ be fixed. As $r\to+\infty$, the semigroups $T^\z_r(t)$ approximate
the semigroup $T^\z(t)$ in the sense of Definition \ref{def3.B}.
\end{proposition}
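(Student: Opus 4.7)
Proof proposal: The plan is to verify the hypotheses of Proposition \ref{prop5.D} and then invoke it, in strict parallel with the proof of Proposition \ref{prop6.J}. The prerequisites in Section \ref{sect5.B} are already assembled: $\YB$ is a Feller projective system (Proposition \ref{prop8.C}); the semigroups $T^\z_r(t)$ on $C_0(\Y)$ are Feller (Corollary \ref{cor8.A}); the $Q$-matrices intertwine the links by Proposition \ref{prop8.E}, so Proposition \ref{prop5.C} upgrades this to the semigroup intertwining $T^\z_{r'}(t)\LYB^{r'}_r = \LYB^{r'}_r T^\z_r(t)$; and by Proposition \ref{prop8.K}, $T^\z(t)$ is the boundary semigroup determined by this family. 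Condition \ref{cond5.A} is similarly routine: $C_c(\Y)$ is a core for $A^\z_r$ by Corollary \ref{cor8.A}(ii), and it is preserved by $A^\z_r$ because every row of $Q^\z_r$ has finitely many nonzero entries (Proposition \ref{prop8.F}).

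The substantive remaining task is Condition \ref{cond4.A} for the embeddings $\varphi_r(\la) = r^{-1}\om_\la$. Part (i) is immediate: the sets $\{\om\in\wt\Om:|\om|\le C\}$ are compact and meet $\varphi_r(\Y)$ only in the finite collection $\{\varphi_r(\la):|\la|\le rC\}$, so $\varphi_r(\Y)$ is discrete and locally finite. I expect Part (ii), the uniform convergence of the links, to be the main obstacle.

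For Part (ii), fix $s>0$ and $\mu\in\Y$ with $m:=|\mu|$. Using the factorization \eqref{eq8.S} together with the identity \eqref{eq7.A} I would rewrite, for $l := |\la|\ge m$,
\begin{equation*}
\LYB^r_s(\la,\mu) = \LB^r_s(l,m)\cdot\dim\mu\cdot\frac{\FS_\mu(\la)}{l^{\down m}},
\end{equation*}
while exploiting that $S_\mu$ is homogeneous of degree $m$ under the cone scaling $\om\mapsto r^{-1}\om$ gives
\begin{equation*}
\LYB^\infty_s(\varphi_r(\la),\mu) = \LB^\infty_s(l/r,m)\cdot\dim\mu\cdot\frac{S_\mu(\la)}{l^m}.
\end{equation*}
Inserting $\pm\,\LB^r_s(l,m)\,S_\mu(\la)/l^m$ splits the difference into (a) the piece $\bigl(\LB^r_s(l,m)-\LB^\infty_s(l/r,m)\bigr)\cdot S_\mu(\la)/l^m$, and (b) the piece $\LB^r_s(l,m)\bigl(\FS_\mu(\la)/l^{\down m}-S_\mu(\la)/l^m\bigr)$. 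Piece (a) is handled by the uniform binomial-to-Poisson convergence already established in Example \ref{ex4.A}, combined with the a priori bound $|S_\mu(\la)|\le C_\mu\, l^m$ coming from the elementary estimate $|p_k(\om)|\le 2|\om|^k$ on $\wt\Om$ and the power-sum expansion of $S_\mu$.

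The delicate ingredient is piece (b). Here the key fact is that $\FS_\mu$, by its defining characterization (top component equal to $S_\mu$ in the grading $\deg p_k = k$, vanishing on diagrams strictly contained in $\mu$), satisfies $|\FS_\mu(\la)-S_\mu(\la)|\le C'_\mu(1+l^{m-1})$; together with $l^{\down m}/l^m = 1+O(1/l)$ this bounds the bracket by $C''_\mu/l$ for $l$ large, so, against the uniformly bounded factor $\LB^r_s(l,m)$ (whose peak is at $l\sim mr/s$ and which decays off of this peak like a Poisson density), piece (b) tends to $0$ uniformly. For $l$ in any bounded range, the factor $\LB^r_s(l,m)= O((s/r)^m)$ itself tends to $0$, which closes the estimate. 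Once Condition \ref{cond4.A}(ii) is secured, Proposition \ref{prop5.D} yields the claimed approximation $T^\z_r(t)\to T^\z(t)$ in the sense of Definition \ref{def3.B}.
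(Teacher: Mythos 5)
Your proposal is correct, and its skeleton coincides with the paper's: the paper likewise reduces everything to Proposition \ref{prop5.D}, verifies the Section \ref{sect5.B} assumptions and Condition \ref{cond5.A} exactly as you do, and isolates Condition \ref{cond4.A}(ii), i.e.\ the uniform limit \eqref{eq8.U}, as the one substantive point. The genuine divergence is how that point is settled: the paper simply cites the proof of Theorem 3.4.7 in \cite{BO-MMJ13}, where the uniform convergence of the links was established, whereas you prove it from scratch. Your self-contained argument is sound: factoring the link via \eqref{eq8.S}, converting $\dim(\mu,\la)/\dim\la$ into $\FS_\mu(\la)/l^{\down m}$ by \eqref{eq7.A}, splitting off the binomial-to-Poisson piece already controlled uniformly in Example \ref{ex4.A} (against the bound $|S_\mu(\la)|\le C_\mu l^m$, which indeed follows from $|p_k(\om)|\le 2|\om|^k$ on $\wt\Om$), and estimating the remaining bracket through $|\FS_\mu(\la)-S_\mu(\la)|\le C_\mu'(1+l^{m-1})$ --- valid because $\FS_\mu-S_\mu$ has degree at most $m-1$ --- together with $l^{\down m}/l^m=1+O(1/l)$. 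The two-regime closing step (the $O(1/l)$ bound with $\LB^r_s(l,m)\le 1$ for large $l$; the bound $\LB^r_s(l,m)=O((s/r)^m)$ for $l$ in a bounded range, with the case $m=0$ trivial since the bracket vanishes for $\mu=\varnothing$) does deliver uniformity in $\la$. What the citation buys the paper is brevity; what your version buys is transparency: it makes explicit that the only analytic input beyond the one-dimensional toy model of Section \ref{sect6} is the comparison of $\FS_\mu$ with its top term $S_\mu$, which is essentially the Vershik--Kerov-type asymptotics hidden in the reference. Two cosmetic corrections: the invariance of $C_c(\Y)$ under $\Qz$ rests on \emph{column}-finiteness of $\Qz$ (a fixed $\ka$ is reachable only from $\ka$ and $\ka\mp\Box$), not row-finiteness, though both hold here; and at $l=0$ with $m\ge1$ your ratio $S_\mu(\la)/l^m$ is formally $0/0$, but both link values vanish there, so that point can simply be excluded.
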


\begin{proof}
As in the proof of Proposition \ref{prop6.J}, we only need to check all the
hypotheses of Proposition \ref{prop5.D}.

Again, the assumptions stated in Section \ref{sect5.B} are satisfied: we know
that $\YB$ is a Feller system, the semigroups $T^\z_r(t)$ with varying
parameter $r>0$  are consistent with the links of $\YB$, they are  Feller
links, and, by the very definition, the semigroup $T^\z(t)$ is the boundary
semigroup determined by the pre-limit semigroups $T^\z_r(t)$.

Next, we have to check Conditions \ref{cond4.A} and \ref{cond5.A}.

The second condition consists in the requirement that $C_c(\Y)$ is a core for
generator $A^\z_r$ and, moreover, is invariant under its action. The fact that
$C_c(\Y)$ is a core follows from Corollary \ref{cor8.A}, item (ii). Its
invariance follows from item (i), because $C_c(\Y)$ is obviously invariant
under the action of $\Qz$.

Finally, the first condition means that that for fixed $s>0$ and $\mu\in\Y$
\begin{equation}\label{eq8.U}
\lim_{r\to \infty}\sup_{\la\in
\Y}\left|\LYB^{r}_{s}(\la,\mu)-\LYB^\infty_{s}(\varphi_r(\la),\mu)\right|=0,
\end{equation}
and this was established in the proof of \cite[Theorem 3.4.7]{BO-MMJ13} (note
only a slight divergence of notation: in \cite{BO-MMJ13}, we wrote $r'$ and $r$
instead of $r$ and $s$, respectively).
\end{proof}

\subsection{The stationary distribution}

The so-called {\it mixed z-measure\/} on $\Y$ with parameters $(z,z')$ and $r$
is defined by
\begin{equation}\label{eq8.V}
M^\z_r(\la)=(r+1)^{-zz'}\left(\frac{r}{r+1}\right)^{|\la|}
\cdot\prod_{\Box\in\la}(z+c(\Box))(z'+c(\Box))\cdot
\left(\frac{\dim\la}{|\la|!}\right)^2,
\end{equation}
where $\la$ ranges over $\Y$. As before, we assume that $r>0$ and $(z,z')$
satisfies Condition \ref{cond1.A}. Then the weights $M^\z_r(\la)$ are strictly
positive and sum to 1, so that $M^\z_r$ is a probability measure on $\Y$ whose
support is the whole set $\Y$. Measures $M^\z_r$ first appeared in
Borodin--Olshanski \cite{BO-CMP00}; additional information can be found in
Okounkov \cite{Ok-SL2-MSRI01}, Borodin--Olshanski \cite{BO-Gamma-Adv05}, and
Olshanski \cite{Ols-IMRN12}. These measures are a particular case of Okounkov's
{\it Schur measures\/} introduced in \cite{Ok-Selecta01}. (As mentioned above,
in those papers, the third parameter, denoted by $\xi$, is related to our
parameter $r$ by $\xi=r(1+r)^{-1}$.)

\begin{proposition}\label{prop8.L}
$M^\z_r$ serves as a unique stationary distribution for the Markov process
$X^\z_r$ determined by the Feller semigroup $T^\z_r(t)$.
\end{proposition}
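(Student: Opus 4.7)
The plan is to establish stationarity via detailed balance with respect to the $Q$-matrix $Q^\z_r$, and to derive uniqueness from irreducibility.

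First I would verify that $M^\z_r$ satisfies the detailed balance condition
$$
M^\z_r(\la)\,Q^\z_r(\la,\la+\Box)=M^\z_r(\la+\Box)\,Q^\z_r(\la+\Box,\la),\qquad \la\in\Y,\ \Box\in\la^+.
$$
This is a direct algebraic check: from the explicit product formula \eqref{eq8.V}, the ratio $M^\z_r(\la+\Box)/M^\z_r(\la)$ equals
$$
\frac{r}{r+1}\,(z+c(\Box))(z'+c(\Box))\left(\frac{\dim(\la+\Box)}{(|\la|+1)\dim\la}\right)^2,
$$
while the ratio of the jump rates $Q^\z_r(\la+\Box,\la)/Q^\z_r(\la,\la+\Box)$ computed from \eqref{eq8.L} is the reciprocal of that expression. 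Summing the detailed balance relations over $\la$ (with fixed $\ka\in\Y$) and using $\sum_\la Q^\z_r(\ka,\la)=0$ gives the global balance $\sum_\la M^\z_r(\la)Q^\z_r(\la,\ka)=0$ for every $\ka\in\Y$, i.e.\ $M^\z_r Q^\z_r=0$.

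Next I would pass from the infinitesimal relation $M^\z_r Q^\z_r=0$ to the semigroup relation $M^\z_r T^\z_r(t)=M^\z_r$ for all $t\ge0$. Since, by Corollary \ref{cor8.A}, the generator of $T^\z_r(t)$ is implemented by $Q^\z_r$ on the core $C_c(\Y)$, and the corresponding minimal transition function solves both Kolmogorov equations uniquely (by Theorem \ref{thm2.1}, which gives regularity), integrating the forward equation against $M^\z_r$ shows that $M^\z_r T^\z_r(t)(\ka)$ is constant in $t$; this yields stationarity.

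For uniqueness I would invoke irreducibility of $Q^\z_r$. Under Condition \ref{cond1.A} every factor $(z+c(\Box))(z'+c(\Box))$ is strictly positive, so $Q^\z_r(\la,\la+\Box)>0$ for every $\Box\in\la^+$ and $Q^\z_r(\la,\la-\Box)>0$ for every $\Box\in\la^-$ (when $\la\ne\varnothing$). Thus from any $\la\in\Y$ one can reach $\varnothing$ by successively removing boxes and then reach any $\mu\in\Y$ by successively appending them, all through transitions of strictly positive rate. Hence the chain $X^\z_r$ is irreducible. Combined with regularity, this implies that an invariant probability measure, if it exists, is unique (the state is then automatically positive recurrent); see e.g.\ Liggett \cite{Lig}. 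Since $M^\z_r$ is such an invariant probability measure, it is the unique stationary distribution.

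The only nontrivial ingredient is the detailed balance identity, and even that is a transparent algebraic verification once one lines up \eqref{eq8.V} with \eqref{eq8.L}; no genuine obstacle is expected.
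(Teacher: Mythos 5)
Your proposal is correct, and for the stationarity half it takes a genuinely different, self-contained route: the paper disposes of stationarity in one line by citing \cite[Proposition 2.12]{BO-PTRF06}, whereas you rederive it by verifying detailed balance directly from the explicit formulas. Your algebra checks out: from \eqref{eq8.V} one gets
$$
\frac{M^\z_r(\la+\Box)}{M^\z_r(\la)}=\frac{r}{r+1}\,(z+c(\Box))(z'+c(\Box))
\left(\frac{\dim(\la+\Box)}{(|\la|+1)\dim\la}\right)^2,
$$
which is indeed the reciprocal of $Q^\z_r(\la+\Box,\la)\big/Q^\z_r(\la,\la+\Box)$ computed from \eqref{eq8.L}, so you in fact establish that $M^\z_r$ is \emph{reversible} --- a stronger statement than the paper records, and the global balance $M^\z_r\,\Qz=0$ follows by the summation you indicate. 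The uniqueness half of your argument (irreducibility from strict positivity of all one-box rates under Condition \ref{cond1.A}, then a standard theorem for regular chains) coincides with the paper's, which invokes Anderson \cite[Chapter 5, Theorem 1.6]{And91}. The one step you should tighten is the passage from $M^\z_r\,\Qz=0$ to $M^\z_r\,T^\z_r(t)=M^\z_r$: ``integrating the forward equation'' is circular as stated, since the identity $\frac{d}{dt}\bigl(M^\z_r P(t)\bigr)(\ka)=\bigl((M^\z_r P(t))\Qz\bigr)(\ka)$ (legitimate here thanks to the row- and column-finiteness of $\Qz$) vanishes at $t=0$ but for $t>0$ only once one already knows $M^\z_r P(t)=M^\z_r$; moreover, the uniqueness furnished by regularity in Theorem \ref{thm2.1} concerns matrix-valued solutions of the Kolmogorov equations, not measure-valued flows. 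The standard repair is the known fact that for a \emph{regular} $Q$-matrix a probability solution of $\mu Q=0$ is automatically invariant for the minimal semigroup (one shows subinvariance $\mu P(t)\le\mu$ and then uses that $P(t)$ is honest and $\mu$ has total mass one to force equality); this is in Anderson, Chapter 5, and the very theorem you cite for uniqueness of the stationary distribution of an irreducible regular chain effectively bundles this implication. With that citation inserted, your plan is complete: it buys a self-contained, purely computational proof of (indeed, reversibility of) the stationary measure at the cost of length, while the paper's proof is a two-line reduction to prior work.
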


\begin{proof}
The fact that $M^\z_r$ is a stationary measure is a particular case of
\cite[Proposition 2.12]{BO-PTRF06}.

Next, from the structure of matrix $Q^\z_r$ and the construction of $X^\z_r$ it
follows that $X^\z_r$ is an irreducible Markov chain: all states $\la\in\Y$ are
communicating. According to a general theorem (see Anderson \cite[Chapter 5,
Theorem 1.6]{And91}) this implies the uniqueness claim.
\end{proof}

As shown in \cite[Proposition 3.5.3]{BO-MMJ13}, the measures $M^\z_r$ with
varying parameter $r$ are compatible with the links $\LYB^{r'}_r$, that is,
they form a coherent family. Therefore, they give rise to a boundary measure on
$\wt\Om$, which we denote by $M^\z$ and call the {\it z-measure on the Thoma
cone}.

\begin{proposition}\label{prop8.M}
$M^\z$ serves as a unique stationary distribution for the Markov process $X^\z$
determined by the Feller semigroup $T^\z(t)$.
\end{proposition}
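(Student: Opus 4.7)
The plan is to deduce both parts directly from Proposition \ref{prop8.L} on the prelimit level by exploiting two pieces of structure already established: the operator intertwining $T^\z(t)\,\LYB^\infty_r=\LYB^\infty_r\,T^\z_r(t)$ from Proposition \ref{prop8.K}, and the defining relation $M^\z\,\LYB^\infty_r=M^\z_r$ that characterizes the boundary measure attached to the coherent family $\{M^\z_r\}$.

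For stationarity of $M^\z$, I would argue as follows. By the density lemma of Section \ref{sect4.C}, the union $\bigcup_{r>0}\LYB^\infty_r C_0(\Y)$ is dense in $C_0(\wt\Om)$, so it suffices to check $\langle T^\z(t)f,M^\z\rangle=\langle f,M^\z\rangle$ for $f=\LYB^\infty_r g$ with $g\in C_0(\Y)$ and $r>0$ arbitrary. The intertwining rewrites the left-hand side as $\langle \LYB^\infty_r T^\z_r(t)g,\,M^\z\rangle$, and applying $\LYB^\infty_r$ to the measure gives $\langle T^\z_r(t)g,\,M^\z_r\rangle$; the right-hand side similarly becomes $\langle g,M^\z_r\rangle$. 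Equality of these two expressions is precisely the stationarity of $M^\z_r$ for $T^\z_r(t)$ guaranteed by Proposition \ref{prop8.L}.

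For uniqueness, suppose $M'$ is any stationary probability measure for $T^\z(t)$ on $\wt\Om$, and set $M'_r:=M'\,\LYB^\infty_r$ for each $r>0$. The compatibility $\LYB^\infty_{r'}\LYB^{r'}_r=\LYB^\infty_r$ shows $\{M'_r\}$ is a coherent family on the prelimit levels. Transposing the intertwining $T^\z(t)\LYB^\infty_r=\LYB^\infty_r T^\z_r(t)$ to the measure side yields $(M'T^\z(t))\LYB^\infty_r=M'_r\,T^\z_r(t)$; combined with $M'T^\z(t)=M'$ this gives $M'_r\,T^\z_r(t)=M'_r$ for every $r>0$. By the uniqueness half of Proposition \ref{prop8.L}, then $M'_r=M^\z_r$ for every $r$. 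Thus $M'$ and $M^\z$ determine the same coherent family, and the canonical bijection \eqref{eq4.A} between $\M(\wt\Om)$ and $\varprojlim\M(\Y_r)$ forces $M'=M^\z$.

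The only genuinely nontrivial point is the transposition step, i.e.\ that the operator identity $T^\z(t)\LYB^\infty_r=\LYB^\infty_r T^\z_r(t)$ on $C_0$-functions implies the corresponding identity on finite measures. This is routine because $\LYB^\infty_r$ is a Markov kernel and both semigroups are Feller, so their actions on the spaces of bounded signed measures are well defined as Banach adjoints; no additional analytic input beyond what was already used in Section \ref{sect5} is needed. Everything else reduces to bookkeeping with the links and the coherence relations.
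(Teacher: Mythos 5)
Your proof is correct and follows essentially the same route as the paper: the paper's own proof invokes Proposition \ref{prop8.L} together with the coherence of the family $\{M^\z_r\}$ and delegates the rest to ``an easy formal argument'' in \cite[Section 2.8]{BO-JFA12}, and that formal argument is exactly what you have written out (stationarity via the density lemma and the intertwining relations, uniqueness by pushing a putative stationary $M'$ down to the levels, applying prelimit uniqueness, and concluding via the bijection \eqref{eq4.A}).
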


\begin{proof}
We have to prove that $M^\z$ satisfies the relation $M^\z T^\z(t)=M^\z$ and is
a unique probability measure on $\wt\Om$ with this property. By Proposition
\ref{prop8.L}, a similar claim holds for measures $M^\z_r$. Because
$\{M^\z_r:r>0\}$ is a coherent family, this immediately implies the desired
claim: an easy formal argument can be found in \cite[Section 2.8]{BO-JFA12}.
\end{proof}

\begin{proposition}\label{prop8.N}
$M^\z$ is the weak limit of measures $\varphi_r(M^\z_r)$ as $r\to+\infty$.
\end{proposition}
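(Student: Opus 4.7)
The plan is to deduce this directly from the general Proposition \ref{prop4.A}, once the required hypotheses are checked for the Feller projective system $\YB$ together with the embeddings $\varphi_r:\Y\hookrightarrow\wt\Om$ defined in the previous subsection.

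First I would recall the two ingredients that are already in place: the projective system $\YB$ is Feller (Proposition \ref{prop8.C}) with boundary $\wt\Om$; and the family $\{M^\z_r: r>0\}$ is coherent with respect to the links $\LYB^{r'}_r$, with boundary measure $M^\z$ (this is the very definition of $M^\z$ given just before Proposition \ref{prop8.M}, relying on \cite[Proposition 3.5.3]{BO-MMJ13}). Thus the statement reduces to verifying Condition \ref{cond4.A} for the maps $\varphi_r$, after which Proposition \ref{prop4.A} applies verbatim and delivers the desired weak convergence.

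For part (i) of Condition \ref{cond4.A}, I would observe that $|\varphi_r(\la)|=r^{-1}|\om_\la|=r^{-1}|\la|\in r^{-1}\Z_+$, so $\varphi_r(\Y)$ is contained in the union of the level sets $\{\om: |\om|=r^{-1}n\}$, $n\in\Z_+$. Each such level set is compact in $\wt\Om$ and contains only finitely many points of $\varphi_r(\Y)$ (there are only finitely many diagrams of a given size), and the level sets escape to infinity. Hence $\varphi_r(\Y)$ is discrete and locally finite in $\wt\Om$. For part (ii), the required uniform estimate
\[
\lim_{r\to\infty}\sup_{\la\in\Y}\bigl|\LYB^{r}_{s}(\la,\mu)-\LYB^\infty_{s}(\varphi_r(\la),\mu)\bigr|=0
\]
for fixed $s>0$ and $\mu\in\Y$ is exactly formula \eqref{eq8.U}, which was already established in the proof of Proposition \ref{prop8.J} (and goes back to the proof of \cite[Theorem 3.4.7]{BO-MMJ13}).

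With both parts of Condition \ref{cond4.A} in hand, Proposition \ref{prop4.A} yields $\varphi_r(M^\z_r)\Rightarrow M^\z$ weakly on $\wt\Om$, completing the proof. No real obstacle is anticipated: everything needed has been assembled earlier in the section, and the proposition is essentially a bookkeeping corollary that packages the coherence of the z-measures together with the verification of Condition \ref{cond4.A} into a concrete approximation statement.
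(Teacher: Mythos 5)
Your proof is correct and follows the paper's own argument exactly: the paper likewise deduces the proposition from Proposition \ref{prop4.A}, citing the verification of Condition \ref{cond4.A} carried out in the proof of Proposition \ref{prop8.J}. Your only addition is the explicit check that $\varphi_r(\Y)$ is discrete and locally finite via the compact level sets $\{\om\in\wt\Om: |\om|=r^{-1}n\}$, a fact the paper simply asserts without proof.
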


\begin{proof}
Indeed, as mentioned above (see the proof of Proposition \ref{prop8.J}), in our
situation Condition \ref{cond4.A} is satisfied. Therefore, we may apply
Proposition \ref{prop4.A} which gives the desired result.
\end{proof}

\begin{remark}
In Olshanski \cite{Ols-IMRN12}, the z-measures on the Thoma cone were defined
in a different way, see \cite[Theorem 5.18]{Ols-IMRN12}. However, the two
definitions are equivalent, as can be seen from the comparison of Proposition
\ref{prop8.N} with \cite[Theorem 5.28]{Ols-IMRN12}.

\end{remark}

\section{Determinantal structure}\label{sect9}

\subsection{Generalities on correlation functions}

Let $\X$ be a locally compact metrizable separable space (we will actually take
for $\X$ the punctured real line $\R^*:=\R\setminus\{0\}$ or the
one-dimensional lattice). A finite or countably infinite collection of points
in $\mathfrak X$ without accumulation points is called a {\it configuration\/}.
We say ``collection'' and not ``subset'' because, in principle, multiple points
are permitted; one could also use the term ``multiset''. To a configuration
$\om$ we assign the Radon measure
$$
\De(\om):=\sum_{x\in\om}\De_x,
$$
where $\De_x$ denotes the delta-measure at $x$. This assignment establishes a
one-to-one correspondence between all possible configurations in $\mathfrak X$
and all sigma-finite Radon measures on $\X$ with the property that the mass of
any compact subset is a nonnegative integer. The space of configurations will
be denoted by $\Conf(\X)$. We equip it with the topology inherited from the
vague topology on the space of Radon measures. In particular, $\Conf(\X)$ has a
natural Borel structure. This structure is generated by the integer-valued
functions $\mathcal N_B$, where $B\subset\X$ is an arbitrary relatively compact
Borel subset and
$$
\mathcal N_B(\om):=|\om\cap B|, \qquad \om\in\Conf(\X).
$$

Let $M$ be a probability Borel measure on $\Conf(\X)$. Then the functions
$\mathcal N_B$ become random variables. We will assume that every such function
has finite moments of any order,
$$
\E_M((\mathcal N_B)^k)<+\infty, \qquad \forall k=1,2,\dots, \quad \forall B,
$$
where $\E_M$ means expectation relative to $M$. Under this assumption one
assigns to $M$ an infinite collection $\{\rho_k: k=1,2,\dots\}$ of measures,
where $\rho_k$ is a (usually infinite) measure on the $k$-fold product space
$\X^k$, defined as follows.

First, given $\om\in\Conf(\X)$, we form a purely atomic measure $\De^k(\om)$ on
$\X^k$ by setting
$$
\De^k(\om):=\sum_{x_1,\dots,x_k}\De_{x_1}\otimes\dots\otimes\De_{x_k},
$$
where the sum is taken on arbitrary ordered $k$-tuples of distinct points
extracted from $\om$.

Second, we interpret $\De^k(\om)$ as a random measure driven by the probability
distribution $M$ and average over $M$,
$$
\rho_k=\rho_k^M:=\E_M(\De^k(\,\cdot\,)).
$$

The measure $\rho_k^M$ is called the $k$th {\it correlation measure\/} of $M$,
and the first correlation measure $\rho_1^M$ is also called the {\it density
measure\/}. Under mild hypotheses on the correlation measures, they determine
the initial measure $M$ uniquely, see Lenard \cite{Len-CMP73}.

$M$ is said to be a {\it determinantal measure\/} if the following condition
holds. Choose a ``reference'' measure $\si$ on $\X$, equivalent to the density
measure (the condition stated below does not depend on the choice of $\si$).
There should exist a function $K(x,y)$ on $\X\times\X$ such that, for every
$k\ge1$, the $k$th correlation measure $\rho^M_k$ is absolutely continuous with
respect to $\si^{\otimes k}$, and the corresponding Radon-Nikod\'ym density is
given by a $k\times k$ principal minor extracted from kernel $K$:
$$
\frac{\rho^M_k}{\si^{\otimes k}}(x_1,\dots,x_k)=\det[K(x_i,x_j)].
$$

The quantity in the left-hand side is called the $k$th {\it correlation
function\/}, and $K(x,y)$ is called the {\it correlation kernel\/} of $M$. In
contrast to correlation functions, the correlation kernel, if it exists, is not
a canonical object: there are ways to modify it without affecting the
correlation functions. On the other hand, any determinantal measure is uniquely
determined by its correlation functions and hence by the correlation kernel.

``Determinantal measure'' is another name for ``determinantal point process''
(more precisely, for the {\it law\/} of such a point process). A standard
reference is Soshnikov's expository paper \cite{Sosh-RussMathSurv00}. See also
the more recent survey Borodin \cite{Bor-Oxford11} and references therein.

\subsection{Determinantal structure of the stationary distributions}

Set $\X=\R^*$ and define a map $\wt\Om\to\Conf(\R^*)$ as follows:
$$
\wt\Om\ni\om\mapsto \bar\om:=\{\al_i: \al_i\ne0\}\cup\{-\be_i:
\be_i\ne0\}\in\Conf(\R^*).
$$
Because of the constraint $\sum\al_i+\sum\be_i\le\de<+\infty$, $\bar\om$ is
indeed a configuration on $\R^*$. Clearly, the map is continuous and hence
Borel. So it converts every probability Borel measure $M$ on $\wt\Om$ to a
probability Borel measure $\bar M$ on $\Conf(\R^*)$. This makes it possible to
speak about the correlation functions of $M$, referring to those of $\bar M$.

We fix a pair of parameters $(z,z')$ satisfying Condition \ref{cond1.A} and
denote by $\bar M^\z$ the measure on $\Conf(\R^*)$ coming from the z-measure
$M^\z$. In the next theorem, $K^\z(x,y)$ denotes the Whittaker kernel on
$\R^*\times\R^*$ studied in Borodin \cite{Bor-AA00} and Borodin--Olshanski
\cite{BO-CMP00}, \cite{BO-PTRF06}. We will not use its exact form here.

\begin{theorem}\label{thm9.A}
$\bar M^\z$ is a determinantal measure whose correlation kernel is the
Whittaker kernel $K^\z(x,y)$.
\end{theorem}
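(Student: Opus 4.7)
The plan is to deduce the determinantal structure of $\bar M^\z$ from the pre-limit measures $M^\z_r$ and pass to a scaling limit. By Proposition \ref{prop8.N}, $M^\z$ is the weak limit of $\varphi_r(M^\z_r)$ on $\wt\Om$; and since the map $\om\mapsto\bar\om$ from $\wt\Om$ to $\Conf(\R^*)$ is continuous in the respective topologies, we also have $\bar M^\z = w\text{-}\lim_{r\to\infty}\overline{\varphi_r(M^\z_r)}$ weakly in $\Conf(\R^*)$. I would organize the proof in three steps: (a) identify each $\overline{\varphi_r(M^\z_r)}$ as a discrete determinantal point process on the scaled lattice $r^{-1}(\Z+\tfrac12)^*\subset\R^*$; (b) show that its correlation kernel converges, as $r\to\infty$, to $K^\z(x,y)$; (c) upgrade this to an identification of the correlation functions of the weak limit $\bar M^\z$.

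For step (a), under the modified Frobenius coordinate embedding $\la\mapsto\bar\om_\la\in\Conf((\Z+\tfrac12)^*)$ the mixed z-measure $M^\z_r$ from \eqref{eq8.V} is a special case of Okounkov's Schur measure \cite{Ok-Selecta01}, hence a determinantal point process with an explicit correlation kernel $K^\z_r$ on $(\Z+\tfrac12)^*\times(\Z+\tfrac12)^*$ (computed in terms of Gauss hypergeometric functions in Borodin--Olshanski \cite{BO-CMP00}). Applying the homothety $\varphi_r$ pushes the configuration to the scaled lattice $r^{-1}(\Z+\tfrac12)^*$; if one computes the correlation functions of the image process against Lebesgue measure on $\R^*$ rather than against counting measure, the correlation kernel becomes the rescaled discrete kernel $K^\z_{r,\mathrm{sc}}(x,y):=r\,K^\z_r(rx,ry)$, with the prefactor absorbing the $r^{-1}$ lattice spacing. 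For step (b), the scaling computation carried out in \cite{BO-CMP00} and \cite{Bor-AA00} --- which is essentially the original derivation of the Whittaker kernel --- shows that $K^\z_{r,\mathrm{sc}}(x,y)\to K^\z(x,y)$ locally uniformly on $\R^*\times\R^*$ as $r\to\infty$.

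For step (c), fix $k\ge1$ and a test function $\phi\in C_c((\R^*)^k)$. By (a)--(b), the integrals $\int\phi\,d\rho_k^{\overline{\varphi_r(M^\z_r)}}$ are Riemann-type sums of $\phi\cdot\det[K^\z_{r,\mathrm{sc}}(x_i,x_j)]$ over scaled lattice points and converge to $\int\phi(x_1,\dots,x_k)\det[K^\z(x_i,x_j)]\,dx_1\cdots dx_k$. Combined with weak convergence $\overline{\varphi_r(M^\z_r)}\to\bar M^\z$, this identifies the $k$-point correlation functions of $\bar M^\z$ with the minors of $K^\z$, and Lenard's uniqueness theorem \cite{Len-CMP73} pins down the measure itself. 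The principal obstacle is converting weak convergence of measures into convergence of correlation functions: one must produce uniform-in-$r$ pointwise bounds on $K^\z_{r,\mathrm{sc}}$ on compacta bounded away from the origin (to dominate the determinants via Hadamard's inequality), together with a tightness argument ensuring that no mass of the correlation measures escapes to $0$ or to infinity. Both kinds of estimates are available from the explicit hypergeometric form of $K^\z_r$ supplied by \cite{BO-CMP00}, but stating them carefully --- so that all the moments needed to control convergence of correlation measures are bounded uniformly in $r$ --- is the most delicate part of the argument.
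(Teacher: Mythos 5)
Your proposal follows the same skeleton as the paper's proof: both start from the pre-limit mixed z-measures $M^\z_r$, which are determinantal on the lattice $r^{-1}\Z'$ (with $\Z'=\Z+\frac12$) with the discrete hypergeometric kernel of \cite{BO-CMP00}; both use the weak convergence $\varphi_r(M^\z_r)\to M^\z$ (Proposition \ref{prop8.N}) together with continuity of $\om\mapsto\bar\om$; and both identify the limiting correlation functions via the kernel asymptotics of \cite[Theorem 5.4]{BO-CMP00} — your steps (a)--(b) and rescaling bookkeeping are fine. The divergence is exactly at what you yourself flag as ``the most delicate part'': upgrading weak convergence of the point processes to convergence of their correlation measures. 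You propose to do this through uniform-in-$r$ pointwise bounds on the rescaled kernels on compacta away from the origin, Hadamard's inequality, and a separate tightness argument, and you leave those estimates unexecuted. As written this is a genuine gap: nothing in your text supplies the required uniform moment control, and extracting such uniform bounds from the hypergeometric form of $K^\z_r$ is a nontrivial piece of analysis that the cited references do not deliver in the uniform-in-$r$ form you need.

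The paper closes this gap by a much simpler device that bypasses kernel estimates entirely. The deterministic inequality \eqref{eq9.C}, namely $|\bar\om\cap B_\epsi|\le\epsi^{-1}|\om|$ with $B_\epsi=\R\setminus(-\epsi,\epsi)$, bounds every counting functional by a power of $|\om|$; consequently the linear statistic $\Psi_F(\om)=\langle F,\De^k(\bar\om)\rangle$ attached to a compactly supported test function $F$ is a continuous function of moderate growth on $\wt\Om$. All moment control is thereby transferred to the one-dimensional pushforwards $|M^\z_r|$, and these are completely explicit: $|M^\z_r|$ is a scaled negative binomial distribution on $r^{-1}\Z_+$, whose moments are bounded uniformly in $r$ (the limit being the Gamma distribution with parameter $zz'$), so condition \eqref{eq9.A} holds by inspection. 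Weak convergence strengthened to moderate-growth test functions (Steps 2--3 of the paper's proof) then yields vague convergence of the correlation measures at once, with no kernel bounds and no separate tightness argument; the kernel asymptotics are invoked only to identify the limit, as in your steps (a)--(b). A minor further point: Lenard's uniqueness theorem is not needed for the statement — the theorem asserts a formula for the correlation functions of $\bar M^\z$, which the limit procedure computes directly; uniqueness questions enter only in Remark \ref{rem9.A}, concerning the recovery of $M^\z$ from $\bar M^\z$, which is a different (and subtler) matter since $M\mapsto\bar M$ is not injective.
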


This result was first proved in \cite{Bor-AA00}. Below we give in detail a
different derivation, because it is well suited for the extension to the case
of finite-dimensional distributions of processes $X^\z$. Note that a similar
argument is contained in \cite[Proposition 4.2]{BO-Beta-EJComb05}.

\begin{proof}
Step 1. Let $M$ be a probability measure on $\wt\Om$ and $\bar M$ be the
corresponding measure on $\Conf(\R^*)$. We will establish a simple estimate
which, in particular, provides a convenient sufficient condition for the
existence of the correlation measures.

For $\epsi>0$, set
$$
B_\epsi:=\R\setminus(-\epsi,\epsi)\subset\R^*.
$$
Recall the notation $|\om|=|(\al,\be,\de)|=\de$. The basic constraint
$\sum(\al_i+\be_i)\le|\om|$ implies the inequality
\begin{equation}\label{eq9.C}
|\bar\om\cap B_\epsi|\le \epsi^{-1}|\om|,
\end{equation}
which in turn implies that
$$
\E_{\bar M}((\mathcal N_{B_\epsi})^k)\le\epsi^{-k}\int_{\wt\Om}|\om|^kM(d\om),
\qquad k=1,2,\dots\,.
$$
Denote by $|M|$ the measure on $\R_+$ that is the pushfoward of $M$ under the
projection $\om\mapsto|\om|$. The above inequality can be rewritten as
$$
\E_{\bar M}((\mathcal N_{B_\epsi})^k)\le\epsi^{-k}\int_{\R_+}s^k |M|(ds) \qquad
k=1,2,\dots\,.
$$
This shows that if $|M|$ has finite moments of all orders, then the left-hand
side is finite for all $k$ and hence the correlation measures of $\bar M$ are
well defined. (Here we tacitly used the evident fact that any compact subset of
$\R^*$ is contained in subset $B_\epsi$ with $\epsi$ small enough.)

Step 2. For $r>0$, set $M_r:=M\La^\infty_r$. It is initially defined as a
probability distribution on $\Y$, but it is convenient to transfer it to
$\wt\Om$ using the embedding $\varphi_r:\Y\to\wt\Om$. So, we will regard each
$M_r$ as a probability distribution on $\wt\Om$.

By Proposition \ref{prop4.A}, $M_r$ converges to $M$ in the weak topology as
$r\to+\infty$, meaning that
\begin{equation}\label{eq9.B}
\lim_{r\to+\infty}\langle\Psi,M_r\rangle=\langle\Psi,M\rangle
\end{equation}
for any continuous bounded function $\Psi$ on $\wt\Om$.

Assume now that we dispose of the following uniform bound on the tails of
measures $|M_r|$:
\begin{equation}\label{eq9.A}
\begin{gathered}
\text{For every $k=1,2,\dots$, one has $\int_{\R_+}s^k|M_r|(ds)\le C_k$}\\
\text{with a constant $C_k$ independent on $r$.}
\end{gathered}
\end{equation}
Then, evidently, \eqref{eq9.B} holds under weaker assumptions on $\Psi$: it
suffices to require that $\Psi$ is continuous and has {\it moderate growth at
infinity\/}, meaning that $|\Psi(\om)|\le\const(1+|\om|)^k$ for some $k$.

Step 3. Assume that condition \eqref{eq9.A} is satisfied. We claim that then
the correlation measures of $M_r$ vaguely converge to the respective
correlation measures of $M$.

Indeed, first of all, by virtue of step 1, our assumption guarantees the very
existence of the correlation measures for measures $M_r$. Moreover, the
inequalities \eqref{eq9.A} are inherited by the limit measure $M$, so that its
correlation measures exist, too.

Fix $k=1,2,\dots$\,. By definition, the vague convergence of the $k$th
correlation measures, $\rho_k^{M_r}\to\rho_k^M$, means that
$$
\lim_{r\to+\infty}\langle F,\rho_k^{M_r}\rangle=\langle F,\rho_k^M\rangle
$$
for any continuous, compactly supported function $F$ on $(\R^*)^k$. By the very
definition of the correlation measures, the latter relation is equivalent to
fulfillment of relation \eqref{eq9.B}, where $\Psi=\Psi_F$ has the following
form
\begin{equation}\label{eq9.D}
\Psi_F(\om)=\langle F,
\De^k(\bar\om)\rangle=\sum_{x_1,\dots,x_k}F(x_1,\dots,x_k),
\end{equation}
where the sum is taken over ordered $k$-tuples of distinct points extracted
from configuration $\bar\om$.

Now, by virtue of step 2, it suffices to check that $\Psi_F$ is continuous and
has moderate growth at infinity.

Choose $\epsi$ so small that the support of $F$ is contained in $B_\epsi^k$. By
virtue of bound \eqref{eq9.C},
$$
|\Psi_F(\om)|\le \epsi^{-k}\Vert F\Vert \,|\om|^k.
$$
Therefore, $\Psi_F$ has moderate growth at infinity.

To see that $\Psi_F$ is continuous look at the right-hand side of \eqref{eq9.D}
and observe that $F(x_1,\dots,x_k)$ vanishes unless all quantities
$|x_1|,\dots,|x_k|$ are bounded from below by $\epsi$, which in turn entails
that the $k$-tuple $\{x_1,\dots,x_k\}$ is contained in the subset
$$
\{\al_1,\dots,\al_m,-\be_1,\dots,-\be_m\}, \qquad m:=[\epsi^{-1}|\om|].
$$
That is, only coordinates of $\om$ with a few first indices really contribute,
and this finite set of possible indices depends only on $|\om|$. Together with
the continuity of $F$ this gives the desired claim.

Step 4. Now we apply the above general arguments to $M:=M^\z$ and the
corresponding pre-limit measures $M_r:=M^\z_r$. Recall that, according to our
convention, $M^\z_r$ lives on $\varphi_r(\Y_r)\subset\wt\Om$. Then we know
exactly what is $|M^\z_r|$: it is a scaled negative binomial distribution
living on the subset $r^{-1}\Z_+\subset\R_+$:
$$
|M^\z_r|(r^{-1}l)=(r+1)^{-zz'}\frac{(zz')^{\down l}}{l!}\left(\frac
r{r+1}\right)^l, \qquad l\in\Z_+.
$$
Condition \eqref{eq9.A} on the tails is readily checked (note that the limiting
measure $|M^\z|$ is the $\Ga$-distribution with parameter $zz'$). Therefore,
all correlation functions exist, and we have the limit relation
$$
\lim_{r\to+\infty}\langle F,\rho_k^{M_r}\rangle=\langle F,\rho_k^M\rangle,
\qquad M_r:=M^\z_r, \quad M:=M^\z
$$
for any continuous compactly supported function $F$ on $(\R^*)^k$.

Step 5. Finally, we apply the results of our papers \cite{BO-CMP00} and
\cite{BO-PTRF06}. As shown in those papers, the pre-limit measures $M_r=M^\z_r$
are determinantal, with some correlation kernels $K^\z_r(x,y)$, called {\it
discrete hypergeometric kernels\/}, for which an explicit expression is known.

In accordance with our definition of measure $M^\z_r$, it lives on the lattice
$r^{-1}\Z'\subset\R^*$, where $\Z':=\Z+\frac12$. As the reference measure
$\si$, we take  the counting measure on the lattice. Then one can write
$$
\langle F,\rho_k^{M_r}\rangle=\sum_{(x_1,\dots,x_k)\in
(r^{-1}\Z')^k}F(x_1,\dots,x_k)\det[K^\z_r(x_i,x_j)].
$$

On the other hand, the limiting behavior of kernels $K^\z_r(x,y)$ was studied
in \cite[Theorem 5.4]{BO-CMP00}. It follows that, as $r\to+\infty$, the
right-hand side of the above relation converges to
$$
\int_{(x_1,\dots,x_k)\in(\R^*)^k} F(x_1,\dots,x_k)\det[K^\z(x_i,x_j)]dx_1\dots
dx_k,
$$
where $K^\z(x,y)$ is the Whittaker kernel. This completes the proof.

\end{proof}

\begin{remark}\label{rem9.A}
The map $M\mapsto\bar M$ converting a measure on $\wt\Om$ to that on
$\Conf(\R^*)$ is not injective, because the map $\om\mapsto\bar\om$ ignores
parameter $\de$. However, $M$ is uniquely determined by its pushforward $\bar
M$ if it is known a priori that $M$ is supported by the subset
$$
\wt\Om_0:=\{\om: \sum\al_i+\sum\be_i=\de\}\subset\wt\Om.
$$
(Note that $\wt\Om_0$ is a dense Borel subset of type $G_\de$.)

This is just the case for $M=M^\z$, as can be proved using Olshanski
\cite[Theorem 6.1]{Ols-Birkh03}.  Therefore, $M^\z$ is completely specified by
the correlation kernel $K^\z(x,y)$ of the measure $\bar M^\z$.
\end{remark}

\subsection{Determinantal structure of equilibrium finite-dimensional distributions}

Starting Markov process $X^\z$ at time $t=0$ from the stationary distribution
we get a stationary in time stochastic process $\wt X^\z$. Given time moments
$0\le t_1<\dots<t_n$, let $M^\z(t_1,\dots,t_n)$ stand for the corresponding
finite-dimensional distribution of $\wt X^\z$. The distributions
$M^\z(t_1,\dots,t_n)$ are invariant under simultaneous shift of all time
moments by a constant; they can be called the {\it equilibrium\/}
finite-dimensional distributions. For $n=1$, we have $M^\z(t)\equiv M^\z$.

Initially $M^\z(t_1,\dots,t_n)$ is defined as a probability measure on the
$n$-fold product space $\wt\Om^n$, but then we convert it to a probability
measure $\bar M^\z(t_1,\dots,t_n)$ on $(\Conf(\R^*))^n$, just as we did above
for the case $n=1$. Observe that $(\Conf(\R^*))^n$ can be identified, in a
natural way, with $\Conf(\,\underbrace{\,\R^*\sqcup\dots\sqcup\R^*}_n\,)$. This
shows that we can interpret $\bar M^\z(t_1,\dots,t_n)$ as a probability
distribution on configurations, and the next theorem says that it is again in
the determinantal class.  This means that the correlation functions of $\bar
M^\z(t_1,\dots,t_n)$ are described by a ``dynamical'' (or ``space-time'')
kernel $K^\z(x,s;y,t)$ on $(\R^*\times\R)\times(\R^*\times\R)$ whose two
arguments, couples $(x,s)$ and $(y,t)$,  should be viewed as space-time
variables ranging over space-time $\R^*\times\R$. Given an arbitrary finite
collection $(x_1,t_1),\dots,(x_k,t_k)$, the $k\times k$ determinant
$$
\det\left[K^\z(x_i,t_i; x_j,t_j)\right]
$$
multiplied by $dx_1\dots dx_k$ gives the probability of the event that at each
prescribed moment $t_i$ (where $i=1,\dots,k$), the configuration
$\bar\om\in\Conf(\R^*)$ corresponding to $\om:=X^\z(t_i)$ contains a point in
the infinitesimal neighborhood $dx_i$ about position $x_i$, for every
$i=1,\dots,k$.

The kernel $K^\z(x,s;y,t)$ in question is the {\it extended Whittaker
kernel\/}; we refer to Borodin--Olshanski \cite{BO-PTRF06} for its description.

\begin{theorem}\label{thm9.B}
The pushforwards $\bar M^\z(t_1,\dots,t_n)$ of the equilibrium
finite-dimen\-sional distributions $M^\z(t_1,\dots,t_n)$ are determinantal
measures described by the extended Whittaker kernel $K^\z(x,s;y,t)$.
\end{theorem}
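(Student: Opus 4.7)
The plan is to follow the scheme of the proof of Theorem \ref{thm9.A}, replacing the coherent family of static measures $\{M^\z_r\}$ by the family of pre-limit equilibrium Markov processes $\{\wt X^\z_r\}$, where $\wt X^\z_r$ denotes $X^\z_r$ started from its stationary distribution $M^\z_r$. The starting point is the result from \cite{BO-PTRF06} that for each $r>0$ the equilibrium finite-dimensional distributions of $\wt X^\z_r$ are determinantal, governed by an explicit dynamical ``extended discrete hypergeometric kernel'' $K^\z_r(x,s;y,t)$; moreover, \cite{BO-PTRF06} already carries out the formal asymptotic analysis identifying $\lim_{r\to+\infty}K^\z_r(x,s;y,t)$ as the extended Whittaker kernel. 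The task is therefore to justify that this formal limit actually governs the finite-dimensional distributions of $\wt X^\z$.

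The first step is to prove that, after transfer via $\varphi_r:\Y\hookrightarrow\wt\Om$, the measures $\varphi_r\bigl(M^\z_r(t_1,\dots,t_n)\bigr)$ converge weakly on $\wt\Om^n$ to $M^\z(t_1,\dots,t_n)$. This is precisely what Corollary \ref{cor3.1} delivers once its hypotheses are verified: the semigroup approximation $T^\z_r(t)\to T^\z(t)$ comes from Proposition \ref{prop8.J}, and weak convergence of the stationary initial distributions $\varphi_r(M^\z_r)\to M^\z$ is Proposition \ref{prop8.N}.

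The second and most substantive step is to upgrade this weak convergence of joint distributions on $\wt\Om^n$ to vague convergence of all joint correlation measures on $\underbrace{\R^*\sqcup\dots\sqcup\R^*}_n$. Here I would mimic Steps 1--3 of the proof of Theorem \ref{thm9.A} in the $n$-variable setting. The key ingredient is a uniform tail bound for the joint distribution: each marginal equals $M^\z_r$, whose pushforward under $\om\mapsto|\om|$ is the scaled negative binomial distribution on $r^{-1}\Z_+$, with all moments uniformly bounded in $r$. An application of H\"older's inequality then gives uniform control of every mixed moment $\E\bigl[|\om^{(1)}|^{k_1}\cdots|\om^{(n)}|^{k_n}\bigr]$. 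Combined with the pointwise estimate $|\bar\om\cap B_\epsi|\le\epsi^{-1}|\om|$, this allows the weak convergence to be extended to the lifted test functions $\Psi_F$ arising from continuous compactly supported $F$ on the product of punctured lines, yielding vague convergence of the joint correlation measures. Determinantality of the pre-limit correlation functions is then transferred to the limit by combining this vague convergence with the kernel asymptotics from \cite{BO-PTRF06}; the uniformity of those asymptotics on compacta, together with the tail bounds, makes the passage to the limit legitimate.

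The main obstacle I anticipate is not conceptual but notational: carefully organizing the joint moment estimates and the ``moderate growth at infinity'' argument in the $n$-time setting (where test functions $F$ are now supported on a product of compact subsets of $\R^*$, one per time moment), and checking that the dependence structure introduced by the transition semigroup does not spoil the tail bound supplied by the stationary marginals. The identification of the limiting kernel as the extended Whittaker kernel, as well as the determinantal formulas for the pre-limit dynamics, are existing results in \cite{BO-PTRF06} and can be quoted directly.
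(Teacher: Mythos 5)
Your proposal matches the paper's own proof essentially point for point: the paper likewise obtains convergence of the pre-limit equilibrium finite-dimensional distributions $M^\z_r(t_1,\dots,t_n)$ to $M^\z(t_1,\dots,t_n)$ from Corollary \ref{cor3.1} (resting on Propositions \ref{prop8.J} and \ref{prop8.N}), controls the tails by noting that every one-dimensional marginal coincides with the stationary distribution $M^\z_r$, and quotes \cite{BO-PTRF06} both for the determinantal structure of the pre-limit dynamics via the extended discrete hypergeometric kernel and for its convergence to the extended Whittaker kernel. Your explicit H\"older-inequality control of the mixed moments is simply a spelled-out version of what the paper compresses into ``a few minor evident modifications'' of the $n=1$ argument of Theorem \ref{thm9.A}.
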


This is a generalization of Theorem \ref{thm9.A}, which is a particular case of
Theorem \ref{thm9.B} for $n=1$, because $M^\z(t)\equiv M^\z$, and
$K^\z(x,s;y,t)$ reduces to the Whittaker kernel $K^\z(x,y)$ for $s=t$.

\begin{remark}[cf. Remark \ref{rem9.A}]\label{rem9.B}
Note that measure $M^\z(t_1,\dots,t_n)$ is supported by the subset
$\wt\Om_0^n$, because every its one-dimensional marginal coincides with $M^\z$
and the latter measure is supported by $\wt\Om_0$. As in the case $n=1$, this
implies that the equilibrium finite-dimensional distributions are uniquely
determined by the extended Whittaker kernel.
\end{remark}

\begin{proof}[Proof of Theorem \ref{thm9.B}]
The argument for Theorem \ref{thm9.A} extends smoothly, with a few minor
evident modifications only. Let $M^\z_r(t_1,\dots,t_n)$ stand for the pre-limit
equilibrium finite-dimensional distributions. Corollary \ref{cor3.1} tells us
that they approximate the distributions $M^\z(t_1,\dots,t_n)$. To bound the
tails we use the fact, mentioned above, that the one-dimensional marginals
coincide with the stationary distribution. The correlation functions of the
pre-limit distributions are described by the extended version of the discrete
hypergeometric kernel, which converges to the extended Whittaker kernel as
$r\to+\infty$: this is established in \cite{BO-PTRF06}.
\end{proof}

\section{Remarks on the Plancherel limit}\label{sect10}

Let us return to the context of Section \ref{sect8.B}. So far the basic
parameters $z$ and $z'$ were fixed, but here we take a limit transition in
formulas \eqref{eq8.L} assuming that $z$ and $z'$ go to infinity while the
third parameter $r$ goes to 0 in such a way that the product $rzz'$ tends to a
fixed real number $\theta>0$. One may simply assume that $r$ is related to the
couple $\z$ by $r=\theta(zz')^{-1}$; recall that because of Condition
\ref{cond1.A}, $zz'$ is strictly positive, so that the above relation is
compatible with the fact that $r$ should be a positive number. The quantity
$\theta$ becomes our new parameter.

It is not difficult to verify that in this limit transition, all results of
Section \ref{sect8.B} survive. Namely, the $Q$-matrix $\Qz$ turns into the
following matrix:
\begin{equation}\label{10.A}
\begin{aligned}
Q_\theta(\la,\la+\square)&=\theta \frac{\dim(\la+\square)}{(|\la|+1)\dim\la},
\quad
\square\in\la^+,\\
Q_\theta(\la,\la-\square)&=\frac{|\la|\dim(\la-\square)}{\dim\la}, \quad \square\in\la^-,\\
-Q_\theta(\la, \la)&=|\la| +\theta.
\end{aligned}
\end{equation}
An analog of Proposition \ref{prop8.D} holds, with the Meixner symmetric
functions being replaced by the so-called {\it Charlier symmetric functions\/},
introduced in \cite{Ols-IMRN12} (these are obtained from the Meixner functions
via the same limit transition). A key observation is that the links
$\LYB^{r'}_r$ depend on parameters $r$ and $r'$ through their ratio $r/r'$,
which remains intact under the limit (it translates into the ratio
$\theta/\theta'$). Because of this fact, all other results of Section
\ref{sect8.B} are smoothly extended, too. We only have to change the notation
$r\to\theta$. Finally, we get a family $\{X_\theta: \theta>0\}$ of continuous
time Feller Markov chains on $\Y$.

Further, one can prove that $X_\theta$ has a unique stationary distribution,
which is nothing else than the the well-known {\it Poissonized Plancherel
measure\/}, first introduced in Baik--Deift--Johansson \cite{BDJ99}:
$$
M_\theta(\la)=e^{-\theta}\theta^{|\la|}\left(\frac{\dim\la}{|\la|!}\right)^2.
$$
It is a degeneration of the mixed z-measure \eqref{eq8.V}, which played an
important role in Borodin--Okounkov--Olshanski \cite{BOO-JAMS}.

The Markov chains $X_\theta$ were studied in our paper \cite{BO-Planch}. As
shown in that paper, $X_\theta$ admits a nice description in terms of the
Poisson process in the quarter-plane and the Robinson--Schensted algorithm.

The formalism of the present paper says that the family $\{X_\theta:
\theta>0\}$ gives rise to a Feller Markov process $X$ on the boundary $\wt\Om$,
and $X$ has a unique stationary distribution $M:=\varprojlim M_\theta$, the
boundary measure corresponding to the family of the Poissonized Plancherel
measures. On the other hand, it is readily seen that this boundary measure is
simply the Dirac measure at the point
$$
\om_1:=(\al=\underline0,\; \beta=\underline0, \;\de=1)\in\wt\Om,
$$
where $\underline0:=(0,0,\dots)$ is the null sequence.

At first glance, this looks strange, but the key is that $X$ is not a genuine
Markov process, but a {\it deterministic process\/}. Its transition function
$P(t)$ degenerates to a semigroup of continuous maps $\wt\Om\to\wt\Om$ which
have the following form:
$$
P(t): (\al,\be,\de)\mapsto (e^{-t}\al,\; e^{-t}\be,\; e^{-t}\de+(1-e^{-t})),
\qquad t\ge0.
$$
{}From this formula it is seen that, as $t\to+\infty$, $P(t)$ contracts the
whole space $\wt\Om$ to the point $\om_1$. There is no contradiction, because
such a deterministic process is formally a Markov process.

On the algebraic level, this phenomenon is clearly seen when we compute the
generator of $X$ as an operator in the algebra of symmetric functions: In
contrast to the Laguerre operator \eqref{eq8.J} we get a {\it first order\/}
differential operator. This operator is best written in terms of the generators
$p_1,p_2,\dots$ (the power-sum symmetric functions, see Section \ref{sect7.A}),
it has the form
$$
(1-p_1)\frac{\pd}{\pd p_1}+\sum_{n\ge2}np_n\frac{\pd}{\pd p_n}\,.
$$

The above discussion shows that our abstract formalism of constructing boundary
Markov processes via Markov intertwiners conceals a potential danger, as it may
happen that the boundary process degenerates to a deterministic process.
Therefore, if one is interested in constructing interesting
infinite-dimensional Markov processes (as we do), one needs additional
arguments guaranteeing that such a degeneration does not occur. We were
fortunate that we were able to explicitly compute the generator of our process
$X^\z$: from the fact that the generator is a second order operator it is easy
to conclude that $X^\z$ cannot be a deterministic process.

Finally, note that the existence of a nontrivial stationary distribution,
$M^\z$, makes it possible to prove the non-determinism of the boundary process
in a different way.

\end{document}